\setlist[itemize]{leftmargin=\parindent}
\newlength{\bibitemsep}
\newlength{\bibparskip}
\let\oldthebibliography\thebibliography
\renewcommand\thebibliography[1]{%
 \oldthebibliography{#1}%
 \setlength{\parskip}{\bibparskip}%
 \setlength{\itemsep}{\bibitemsep}%
}
\theoremstyle{plain}
\newtheorem{theorem}{Theorem}[subsection]
\newtheorem{lemma}[theorem]{Lemma}
\newtheorem{proposition}[theorem]{Proposition}
\newtheorem{definition}[theorem]{Definition}
\newtheorem{notation}[theorem]{Notation}
\newtheorem{conjecture}[theorem]{Conjecture}
\newtheorem{situation}[theorem]{Situation}
\theoremstyle{remark}
\declaretheorem[name=Remark,sibling=theorem,qed={\lower-0.3ex\hbox{$\diamond$}}]{remark}
\declaretheorem[name=Example,sibling=theorem,qed={\lower-0.3ex\hbox{$\diamond$}}]{example}
\newenvironment{smatrix}{\left( \begin{smallmatrix} } {\end{smallmatrix} \right) }
\newcommand{\stbt}[4]{\begin{smatrix}#1 & #2 \\ #3 & #4\end{smatrix}}
\DeclareMathOperator{\GL}{GL}
\DeclareMathOperator{\SO}{SO}
\DeclareMathOperator{\GSp}{GSp}
\DeclareMathOperator{\GSpin}{GSpin}
\DeclareMathOperator{\Gal}{Gal}
\DeclareMathOperator{\Hom}{Hom}
\DeclareMathOperator{\Frob}{Frob}
\DeclareMathOperator{\Iw}{Iw}
\DeclareMathOperator{\Kl}{Kl}
\DeclareMathOperator{\Gr}{Gr}
\DeclareMathOperator{\Fil}{Fil}
\DeclareMathOperator{\Sieg}{Si}
\DeclareMathOperator{\diag}{diag}
\DeclareMathOperator{\myPr}{Pr}\renewcommand{\Pr}{\myPr}
\newcommand{\f}{\mathrm{f}}
\renewcommand{\AA}{\mathbf{A}}
\newcommand{\CC}{\mathbf{C}}
\newcommand{\II}{\mathbf{I}}
\newcommand{\QQ}{\mathbf{Q}}
\newcommand{\RR}{\mathbf{R}}
\newcommand{\VV}{\mathbf{V}}
\newcommand{\ZZ}{\mathbf{Z}}
\newcommand{\Af}{\AA_{\mathrm{f}}}
\newcommand{\QQbar}{\overline{\QQ}}
\newcommand{\Ql}{\QQ_\ell}
\newcommand{\Qp}{\QQ_p}
\newcommand{\Dcris}{\mathbf{D}_{\mathrm{cris}}}
\newcommand{\DdR}{\mathbf{D}_{\mathrm{dR}}}
\newcommand{\Zp}{\ZZ_p}
\newcommand{\cE}{\mathcal{E}}
\newcommand{\cF}{\mathcal{F}}
\newcommand{\cG}{\mathcal{G}}
\newcommand{\cH}{\mathcal{H}}
\newcommand{\cK}{\mathcal{K}}
\newcommand{\cL}{\mathcal{L}}
\newcommand{\cN}{\mathcal{N}}
\newcommand{\cO}{\mathcal{O}}
\newcommand{\cP}{\mathcal{P}}
\newcommand{\cS}{\mathcal{S}}
\newcommand{\cV}{\mathcal{V}}
\newcommand{\cW}{\mathcal{W}}
\newcommand{\cX}{\mathcal{X}}
\newcommand{\RGt}{\widetilde{R\Gamma}}
\newcommand{\wH}{\widetilde{H}}
\newcommand{\bc}{\mathbf{c}}
\newcommand{\fa}{\mathfrak{a}}
\newcommand{\fb}{\mathfrak{b}}
\newcommand{\fX}{\mathfrak{X}}
\newcommand{\Pif}{\Pi_{\mathrm{f}}}
\newcommand{\Sif}{\Sigma_{\mathrm{f}}}
\newcommand{\new} {\mathrm{new}}
\newcommand{\cris}{\mathrm{cris}}
\newcommand{\cl}  {\mathrm{cl}}
\newcommand{\ch}  {\mathrm{ch}}
\newcommand{\dep} {\mathrm{dep}}
\newcommand{\ord} {\mathrm{ord}}
\newcommand{\sph} {\mathrm{sph}}
\newcommand{\mot} {\mathrm{mot}}
\newcommand{\et}{\text{\textup{\'et}}}
\newcommand{\Bor}{\mathrm{B}}
\newcommand{\vno}{\varnothing}
\newcommand{\ucG}{\underline{\mathcal{G}}}
\numberwithin{equation}{section}
\renewcommand{\le}{\leqslant}
\renewcommand{\ge}{\geqslant}
\author{David Loeffler}
\author{Sarah Livia Zerbes}
\thanks{Supported by the following grants: Royal Society University Research Fellowship ``$L$-functions and Iwasawa theory'' and EPSRC Standard Grant EP/S020977/1 (Loeffler); ERC Consolidator Grant ``Euler systems and the Birch--Swinnerton-Dyer conjecture'' (Zerbes).}
\title{P-adic $L$-functions and diagonal cycles for $\GSp_4 \times \GL_2 \times \GL_2$}
\begin{document}
 \renewcommand{\crefrangeconjunction}{--} 
\begin{abstract}
 We develop a (largely conjectural) theory of $p$-adic L-functions interpolating square roots of central $L$-values for automorphic forms on $\GSp_4 \times \GL_2 \times \GL_2$, and a relation between these $p$-adic $L$-functions and families of Galois cohomology classes interpolating algebraic cycles. Our theory is a generalisation of the theory of ``diagonal cycles'' developed by Darmon and Rotger for the $\GL_2$ triple product.
\end{abstract}

 \maketitle

 \makeatletter
 \patchcmd{\@tocline}
 {\hfil}
 {\leaders\hbox{\,.\,}\hfil}
 {}{}
 \makeatother

 \setcounter{tocdepth}{1}
 \tableofcontents

\section{Introduction}

 In this paper, we develop a (largely conjectural) theory of $p$-adic $L$-functions interpolating square roots of central $L$-values for automorphic forms on $\GSp_4 \times \GL_2 \times \GL_2$, and a relation between these $p$-adic $L$-functions and families of Galois cohomology classes interpolating algebraic cycles. Our theory is inspired by the beautiful theory of ``diagonal cycles'' for the group $\GL_2 \times \GL_2 \times \GL_2$, developed by Darmon and Rotger in \cite{darmonrotger14,darmonrotger16} (building on earlier work of Gross, Kudla and Schoen). We briefly recall the outline of their theory.

 \subsection{Summary of the theory for $\GL_2 \times \GL_2 \times \GL_2$}
 Let $f, g, h$ be cuspidal modular newforms, of weights $k_f, k_g, k_h$ respectively, whose nebentype characters satisfy $\chi_f \chi_g \chi_h = 1$. We say the triple is ``$f$-dominant'' if $k_f \ge k_g + k_h$, and similarly for $g$ and $h$; if none of these holds, we say it is ``balanced''.

 \subsubsection*{Split case} We first suppose that there is no finite prime dividing the levels of all three forms (or, more generally, that the local root numbers $\varepsilon_\ell(f \times g \times h)$ are $+1$ for all finite primes $\ell$); we call this the \emph{split case}. In this case, the global root number of the $L$-function $L(f \times g \times h, s)$ is +1 if one of the forms is dominant, and $-1$ for balanced weights.

 The theory of \cite{darmonrotger14} shows that if one allows $f,g,h$ to vary in $p$-adic Hida families $\cF, \cG, \cH$, then there are three $p$-adic $L$-functions $\cL_p^{(\cF)}(\cF \times \cG \times \cH)$, $\cL_p^{(\cG)}(\cF \times \cG \times \cH)$ and $\cL_p^{(\cH)}(\cF \times \cG \times \cH)$, all of which are $p$-adic meromorphic functions on the 3-dimensional parameter space $\fX = \fX_{\cF} \times \fX_{\cG} \times \fX_{\cH}$. These all interpolate \emph{square roots} of central values of $L(f \times g \times h, s)$, for specialisations $(f,g,h)$ of $(\cF,\cG,\cH)$, but in different regions: $\cL_p^{(\cF)}(\cF \times \cG \times \cH)$ interpolates these values for specialisations with $f$ dominant, and similarly for $\cL_p^{(\cG)}$ and $\cL_p^{(\cH)}$.

 In the balanced region, there is no interesting $L$-value to interpolate, since the global root number is\footnote{Modulo a finite number of ``exceptional'' specialisations, which are those where $(f, g, h)$ are all of weight 2 and trivial character, with $a_p(f) a_p(g) a_p(h) = 1$; this is related to exceptional-zero phenomena at $p$.} $-1$. However, in place of an $L$-value, we have an interesting Galois cohomology class: for balanced triples $(f, g, h)$ one can define a ``diagonal cycle'' $\Delta(f, g, h)$, which is an algebraic cycle on a product of Kuga--Sato varieties, and the image of this cycle under the \'etale cycle class map gives a cohomology class for the 8-dimensional self-dual Galois representation associated to $(f, g, h)$. The results of \cite{darmonrotger16,DRfamilies,BSV1}  show that these classes interpolate to an analytic family of Galois cohomology classes $\Delta(\cF, \cG, \cH)$ over $\fX$.

 So one has four objects varying analytically over $\fX$: one family of cohomology classes, with an interpolating property in the balanced region; and three analytic functions, i.e.~families of scalars, with interpolating properties in the other three regions. The keystone of the theory, the \emph{explicit reciprocity law} for diagonal cycles, shows that these are related by the Perrin-Riou regulator map of $p$-adic Hodge theory: one can project $\Delta(\cF, \cG, \cH)$ to any of three different 1-dimensional subquotients of the Galois representation locally at $p$, and the images of these projections under the regulator maps give the three $L$-functions $\cL_p^{(\cF)}$, $\cL_p^{(\cG)}$ and $\cL_p^{(\cH)}$. This reciprocity law has a wealth of deep arithmetic applications, including cases of the equivariant Birch--Swinnerton-Dyer conjecture for Artin twists of elliptic curves (see \cite{darmonrotger16}).

  \begin{wrapfigure}{l}{0.4\linewidth}
 \tikzset{every picture/.style={line width=0.75pt}} 

 \begin{tikzpicture}[x=0.75pt,y=0.75pt,yscale=-1,xscale=1]

 \draw    (100,230) -- (100,42) ;
 \draw [shift={(100,40)}, rotate = 450] [color={rgb, 255:red, 0; green, 0; blue, 0 }  ][line width=0.75]    (10.93,-3.29) .. controls (6.95,-1.4) and (3.31,-0.3) .. (0,0) .. controls (3.31,0.3) and (6.95,1.4) .. (10.93,3.29)   ;
 \draw    (100,230) -- (288,230) ;
 \draw [shift={(290,230)}, rotate = 180] [color={rgb, 255:red, 0; green, 0; blue, 0 }  ][line width=0.75]    (10.93,-3.29) .. controls (6.95,-1.4) and (3.31,-0.3) .. (0,0) .. controls (3.31,0.3) and (6.95,1.4) .. (10.93,3.29)   ;
 \draw    (100,150) -- (180,230) ;
 \draw    (100,150) -- (210,40) ;
 \draw    (290,120) -- (180,230) ;
 \draw    (122.12,112.12) -- (147.88,137.88) ;
 \draw [shift={(150,140)}, rotate = 225] [fill={rgb, 255:red, 0; green, 0; blue, 0 }  ][line width=0.08]  [draw opacity=0] (10.72,-5.15) -- (0,0) -- (10.72,5.15) -- (7.12,0) -- cycle    ;
 \draw [shift={(120,110)}, rotate = 45] [fill={rgb, 255:red, 0; green, 0; blue, 0 }  ][line width=0.08]  [draw opacity=0] (10.72,-5.15) -- (0,0) -- (10.72,5.15) -- (7.12,0) -- cycle    ;
 \draw    (132.12,197.88) -- (157.88,172.12) ;
 \draw [shift={(160,170)}, rotate = 495] [fill={rgb, 255:red, 0; green, 0; blue, 0 }  ][line width=0.08]  [draw opacity=0] (10.72,-5.15) -- (0,0) -- (10.72,5.15) -- (7.12,0) -- cycle    ;
 \draw [shift={(130,200)}, rotate = 315] [fill={rgb, 255:red, 0; green, 0; blue, 0 }  ][line width=0.08]  [draw opacity=0] (10.72,-5.15) -- (0,0) -- (10.72,5.15) -- (7.12,0) -- cycle    ;
 \draw    (192.12,172.12) -- (227.88,207.88) ;
 \draw [shift={(230,210)}, rotate = 225] [fill={rgb, 255:red, 0; green, 0; blue, 0 }  ][line width=0.08]  [draw opacity=0] (10.72,-5.15) -- (0,0) -- (10.72,5.15) -- (7.12,0) -- cycle    ;
 \draw [shift={(190,170)}, rotate = 45] [fill={rgb, 255:red, 0; green, 0; blue, 0 }  ][line width=0.08]  [draw opacity=0] (10.72,-5.15) -- (0,0) -- (10.72,5.15) -- (7.12,0) -- cycle    ;

 \draw (72,32.4) node [anchor=north west][inner sep=0.75pt]    {$k_{h}$};
 \draw (281,240.4) node [anchor=north west][inner sep=0.75pt]    {$k_{g}$};
 \draw (165,142.4) node [anchor=north west][inner sep=0.75pt]    {$\Delta $};
 \draw (104,82.4) node [anchor=north west][inner sep=0.75pt]    {$\mathcal{L}^{(\mathcal{H})}_{p}$};
 \draw (106,202.4) node [anchor=north west][inner sep=0.75pt]    {$\mathcal{L}^{(\mathcal{F})}_{p}$};
 \draw (236,202.4) node [anchor=north west][inner sep=0.75pt]    {$\mathcal{L}^{(\mathcal{G})}_{p}$};
 \draw (181,122.4) node [anchor=north west][inner sep=0.75pt]    {$( -)$};
 \draw (104,178.4) node [anchor=north west][inner sep=0.75pt]    {$( +)$};
 \draw (106,56.4) node [anchor=north west][inner sep=0.75pt]    {$( +)$};
 \draw (251,178.4) node [anchor=north west][inner sep=0.75pt]    {$( +)$};
 \end{tikzpicture}
 \captionsetup{width=0.9\linewidth}
 \caption{Interpolation regions for $\GL_2 \times \GL_2 \times \GL_2$}
 \label{fig:tripleprod}

 \vspace{-3ex}
 \end{wrapfigure}
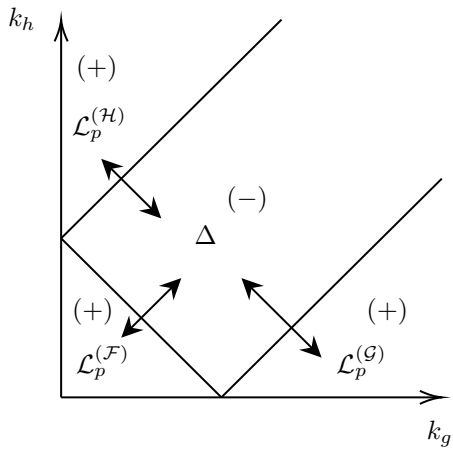

 We summarise this visually in \cref{fig:tripleprod}. Here we have fixed a value of $k_f$ and considered $k_g$ and $k_h$ as variables; the slanted rectangular strip is the balanced region, and the three triangular regions are those where $f$, $g$ or $h$ is dominant. The arrows denote reciprocity laws, relating the $p$-adic $L$-functions given by analytic continuation from a ``sign $+1$'' region to the cohomology classes given by analytic continuation from the neighbouring ``sign $-1$'' region.

 \subsubsection*{Non-split cases} If one relaxes the assumptions on the levels of the Hida families, it can happen that there are some primes $\ell \ne p$ such that $\varepsilon_\ell(f, g, h) = -1$ (for some, and hence all, classical specialisations of $(\cF, \cG,\cH)$). If there is an even number of such primes (the ``indefinite'' case), then the pattern of global root numbers is the same as in \cref{fig:tripleprod}, and the theory is similar, although one needs to work with non-split Shimura curves over $\QQ$ in place of modular curves (ramified precisely at those primes where the $\varepsilon$-factor is $-1$).

 The ``definite'' case, when the root numbers are $-1$ at an odd number of primes $\ell$, presents new phenomena. In this case, the pattern of global root numbers switches around: the global root number is $+1$ for balanced weights and $-1$ for unbalanced ones. So one might hope for a single ``balanced'' $p$-adic $L$-function for $\cF \times \cG \times \cH$, interpolating central values in the balanced region, and three families of Galois cohomology classes, each interpolating algebraic cycles in one of the three dominant regions; and there should be reciprocity laws relating all three families of cohomology classes to the $p$-adic $L$-function.

 This balanced $p$-adic $L$-function has been constructed by Greenberg and Seveso \cite{greenbergseveso20}, using the arithmetic of the definite quaternion algebra ramified at $\infty$ and at the finite primes of root number $-1$. However, we do not know of any construction of algebraic cycles for specialisations in the dominant regions, so the theory is rather less complete in the definite case than the indefinite one.

 \subsubsection*{Eisenstein cases}

  One can also investigate the case when one or more of the cuspidal Hida families $(\cF, \cG, \cH)$ is replaced with a family of Eisenstein series. In this way, one can regard the Beilinson--Flach cohomology classes for $\GL_2 \times \GL_2$ cusp forms \cite{BDR-BeilinsonFlach}, and Beilinson--Kato classes for a single cusp form \cite{kato04}, as degenerate cases of the diagonal-cycle cohomology classes. This forms one of the ``trilogies'' of global cohomology classes outlined in \cite{BCDDPR}.

  In these cases, one obtains an extra structure which is not present in the fully cuspidal case: by varying the defining data of the Eisenstein series, one can obtain global Galois cohomology classes not only over $\QQ$ but over cyclotomic fields $\QQ(\mu_m)$ for varying $m$, satisfying norm-compatibility relations as $m$ changes. These are \emph{Euler systems} in the sense of \cite{rubin00}\footnote{Note that some works such as \cite{BCDDPR} use the term ``Euler system'' in a more general sense, to mean any family of globally-defined cohomology classes. However, in this work we use the term in its stricter sense.}. The existence of an Euler system \emph{sensu stricto} has powerful consequences for Selmer groups and the Bloch--Kato conjecture; see \cite{kato04} and \cite{KLZ17} for examples of such results.

 \subsection{Aims of this paper}

  In the present paper, we shall attempt to develop an analogous theory with $\GL_2 \times \GL_2 \times \GL_2$ replaced by the larger group $\GSp_4 \times \GL_2 \times \GL_2$. This can be seen as a $p$-adic counterpart of the Gross--Prasad conjecture \cite{grossprasad92} for $\GSpin(4) \times \GSpin(5)$, while the triple-product theory is related to the Gross--Prasad conjecture for $\GSpin(3) \times \GSpin(4)$, since the split forms of the spin groups are given by
  \begin{align*}
   \GSpin(3) &\cong \GL_2, &
   \GSpin(4) &\cong \GL_2 \times_{\GL_1} \GL_2, &
   \GSpin(5) \cong \GSp_4.
  \end{align*}
  (In this optic, one can also interpret the $p$-adic $L$-functions and reciprocity laws for Heegner points of \cite{BDP13} as a $p$-adic Gross--Prasad theory for $\GSpin(2) \times \GSpin(3)$, taking a non-split form of $\GSpin(2)$ compact at $\infty$.)

  We shall consider the variation of central $L$-values and Galois cohomology in 2-parameter families of automorphic forms for $\GSp_4 \times \GL_2 \times \GL_2$, given by tensoring a fixed automorphic representation of $\GSp_4$ with Hida families on the two $\GL_2$ factors.

  As we vary over specialisations of such a family, the local root numbers at the finite primes are constant, but the local root numbers at $\infty$ are not: they depend on the weights of the specialisations concerned. So a prominent role in this theory is played by the diagram of interpolation regions shown in \cref{fig:GGP} below, which has nine regions labelled $\{ a, a',b,b',c,d,d',e,f\}$, each labelled with a sign ``$+$'' or ``$-$'' indicating the corresponding Archimedean root number. This diagram is the $\GSp_4 \times \GL_2 \times \GL_2$ counterpart of \cref{fig:tripleprod}.

  If the product of the local root numbers at the finite places is $+1$ (the indefinite case) then in the best of all possible worlds we should have six $p$-adic $L$-functions, one for each of the six regions $\{a, a',c,d,d',f\}$ of sign $+1$, interpolating square roots of central $L$-values for specialisations of our Hida families lying in that region. There should also be three analytic families of cohomology classes, one for each of the sign $-1$ regions $\{b,b',e\}$; and for each pair of neighbouring regions on the diagram, there should be a reciprocity law, relating the cohomology class for a sign $-1$ region to the $p$-adic $L$-function for the neighbouring sign $+1$ region.

  If the local root numbers at the finite places is $-1$ (the definite case), then we should expect instead that there are $p$-adic $L$-functions in regions $\{b,b',e\}$ and families of cohomology classes in the other six regions, again with reciprocity laws for each pair of neighbouring regions.

  Sadly, we cannot carry out this programme in anything like its full form. With the presently available methods, we can do the following:
  \begin{itemize}
   \item \emph{P-adic $L$-functions, indefinite case}: In the indefinite case, we expect $p$-adic $L$-functions for regions $\{a, a', c, d, d',f\}$. We shall give a construction for region $(f)$ below; and region $(c)$ is covered in forthcoming work of Bertolini--Seveso--Venerucci. Regions $(d)$ and $(d')$ may also be accessible, but regions $(a)$ and $(a')$ are completely out of reach.

   \item \emph{Algebraic cycles, indefinite case}: We explain below the construction of a family of cohomology classes for indefinite families interpolating algebraic cycles in region $(e)$. We expect such families also to exist for regions $(b)$ and $(b')$, but we have no idea how to construct these.

   \item \emph{Reciprocity laws, indefinite case}: Since we can construct a family of classes in region $(e)$, we can hope for four reciprocity laws relating it to each of the adjacent regions $\{c,d,d',f\}$. The reciprocity law relating regions $(e)$ and $(f)$ is closely related to the results of \cite{LZ20}; we state below a partial result towards this, whose proof is given in a sequel to the present paper (although this falls short of a full proof). We understand that a reciprocity law relating regions $(e)$ and $(c)$ will be treated in the forthcoming work of Bertolini--Seveso--Venerucci. It seems likely that these methods will also apply to relate region $(e)$ to regions $(d)$ and $(d')$ once the relevant $p$-adic $L$-functions have been constructed, but this is a problem for future work.

   \item \emph{P-adic $L$-functions, definite case}: In the definite case, the tools are available to construct a $p$-adic $L$-function in region $(e)$, although we shall not give full details here. Regions $(b)$, $(b')$ seem to be more difficult.

   \item \emph{Algebraic cycles and reciprocity laws, definite case}: Here we are entirely at a loss. There are 6 regions in which one might hope for an algebraic cycle, but we do not have a candidate construction for any of them. This clearly also rules out any hope of proving reciprocity laws. (This is perhaps not surprising, since we are equally stuck in the analogous case for $\GL_2 \times \GL_2 \times \GL_2$.)
  \end{itemize}

  In this paper, we shall not consider the ``Eisenstein'' cases -- where one or both of the two $\GL_2$ Hida families is a family of Eisenstein series. In such cases, the local root numbers at finite primes are automatically $+1$, so non-split inner forms do not arise. We refer to \cite{LSZ17, LPSZ1, LZ20} for the case of two Eisenstein series, giving an Euler system for cusp forms on $\GSp_4$; and to the very recent preprint \cite{HJS20} of Hsu, Jin and Sakamoto (based on a project at the 2018 Arizona Winter School) for the case where exactly one of the Hida families is taken to be Eisenstein, giving an Euler system for cusp forms on $\GSp_4 \times \GL_2$.

 \subsection*{Acknowledgements}

  The present paper has its origins in a talk given by Rodolfo Venerucci at the conference ``P-adic modular forms and p-adic L-functions'' held in Como, Italy, in 2019, describing his forthcoming work with Massimo Bertolini and Marco Seveso on relating families of algebraic cycles in region $(e)$ to $p$-adic $L$-functions in region $(c)$. The present paper grew out of an attempt to understand the relation between this construction and our works with Pilloni and Skinner. We are very grateful to the organisers of the Como conference for making this encounter possible, and to Rodolfo and Marco for generously sharing their ideas with us during the Como conference and a subsequent visit to Milan.


\section*{Conventions}

 \subsubsection*{Groups}As in \cite[\S 2]{LSZ17}, $G$ denotes the symplectic group $\GSp_4$, given by
 \[ \{ (g, \nu) \in \GL_4 \times \mathbf{G}_m: {}^t\! g J g = \nu J\},
  \qquad J = \begin{smatrix}
        &&&1\\
        &&1\\
        &-1\\
        -1
       \end{smatrix}.\]
 $\Bor_G$ denotes the upper-triangular Borel subgroup of $G$, and $P_{\Sieg}$ and $P_{\Kl}$ denote the Siegel and Klingen parabolic subgroups containing $\Bor_G$.

 Let $H$ denote the group $\GL_2 \times_{\GL_1} \GL_2$. We consider $H$ as a subgroup of $G$ via the embedding
 \[ \iota: \left[\begin{pmatrix} a & b \\ c & d\end{pmatrix}, \begin{pmatrix} a' & b'\\ c'& d'\end{pmatrix}\right]\mapsto\begin{pmatrix} a &&& b\\ & a' & b' & \\ & c' & d' & \\ c &&& d \end{pmatrix}.
 \]
 We also consider $H$ as a subgroup of $\tilde H = \GL_2 \times \GL_2$ in the obvious way. Thus we obtain a diagonal embedding of $H$ into $G \times \tilde{H}$.

 \subsubsection*{Dirichlet characters} If $\chi: (\ZZ / N\ZZ)^\times \to R^\times$ is a finite-order character, for some ring $R$, then we write $\widehat{\chi}$ for the character of $\QQ^\times \backslash \AA^\times / \RR^{\times}_{>0}$ satisfying $\widehat{\chi}(\varpi_\ell) = \chi(\ell)$ for primes $\ell \nmid N$, where $\varpi_\ell$ is a uniformizer at $\ell$. We identify finite-order characters of $\QQ^\times \backslash \AA^\times / \RR^{\times}_{>0}$ with characters of $\Gal(\QQ^{\mathrm{ab}} / \QQ)$ via the Artin map, normalised to send $\varpi_\ell$ to a geometric Frobenius $\Frob_{\ell}^{-1}$.

 (Note that that the restriction of $\widehat{\chi}$ to $\widehat{\ZZ}^\times \subset \Af^\times$ is the inverse of $\chi$, and the Galois character associated to $\widehat{\chi}$ is the composite of $\chi$ with the inverse of the mod $M$ cyclotomic character.)

\section{The classical Gross--Prasad conjecture for \texorpdfstring{$\GSpin(4) \times \GSpin(5)$}{GSpin(4) x GSpin(5)}}

 \subsection{The local conjecture}

  Let $F$ be a number field, $v$ a place of $F$ and $\Pi_v$ an irreducible smooth representation of $G_v \coloneqq G(F_v)$. We let $\Sigma_v = \Sigma_{1, v} \boxtimes \Sigma_{2, v}$ be an irreducible representation of $\tilde{H}_v = \GL_2(F_v) \times \GL_2(F_v)$. We are interested in the space
  \[
   \Hom_{H_v}(\Pi_v \otimes \Sigma_{v}, \CC),
  \]
  where $H_v$ is embedded diagonally in $G_v \times \tilde{H}_v$ as above. This space is trivially zero unless the central characters satisfy $\chi_{\Pi,v}\chi_{\Sigma_{1}, v} \chi_{\Sigma_{2}, v} = 1$, so we shall suppose henceforth that this condition holds.

  \begin{conjecture}[Local Gross--Prasad conjecture]
   \label{conj:locGGP}
   Let $\Phi(\Pi_v)$ be the local $L$-packet containing $\Pi_v$. Then we have
   \[ \sum_{\Pi_v^\sharp \in \Phi(\Pi_v)}\dim \Hom_{H_v}(\Pi_v^{\sharp} \otimes \Sigma_{v}, \CC) = \begin{cases}
       1 & \text{if $\varepsilon_v(\Pi_v \times \Sigma_{v}) = 1$,} \\
       0 & \text{if $\varepsilon_v(\Pi_v \times \Sigma_{v}) = -1$.}
      \end{cases}
   \]
   Moreover, there is an explicit recipe specifying the unique member $\Pi_v^\sharp$ of the $L$-packet for which the Hom-space is non-zero.
  \end{conjecture}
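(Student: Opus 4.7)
The plan is to reduce this statement to the classical Gross--Prasad problem for the pair $(\SO(4),\SO(5))$, which is essentially a theorem in the tempered case. Using the exceptional isomorphisms $\GSpin(5)\cong \GSp_4$ and $\GSpin(4)\cong \GL_2\times_{\GL_1}\GL_2$ recalled in the introduction, together with the fact that the hypothesis $\chi_{\Pi,v}\chi_{\Sigma_1,v}\chi_{\Sigma_2,v}=1$ forces $\Pi_v\otimes\Sigma_v$ to be trivial on the centre of $H_v$, the $\Hom$-space in question is unchanged when we pass to the Spin (derived) subgroups and then descend to the corresponding special orthogonal groups. Thus the whole question is of Gross--Prasad type for $\SO(4)\times\SO(5)$, with the L-packets on either side matched up by fixing a compatible central character.

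Once in the $\SO$-setting, the uniqueness bound $\dim\Hom_{H_v}(\Pi_v^\sharp\otimes\Sigma_v,\CC)\leq 1$ for each individual member of the packet follows from the multiplicity-one theorem of Aizenbud--Gourevitch--Rallis--Schiffmann for orthogonal branching pairs. For tempered $\Pi_v$ and $\Sigma_v$, the dichotomy itself and the explicit identification of the distinguished member $\Pi_v^\sharp$ of the Vogan packet are then Waldspurger's theorem for $(\SO(n),\SO(n+1))$, with the refinements of Beuzart-Plessis giving the character-theoretic formula for the multiplicity. The main bookkeeping here is to match the L-packets for $\GSp_4$ produced by the Gan--Takeda local Langlands correspondence with the $\SO(5)$ packets arising on the orthogonal side, and to verify that the local epsilon factor $\varepsilon_v(\Pi_v\times\Sigma_v)$ agrees with the orthogonal one attached to the $8$-dimensional symplectic parameter $\varphi_{\Pi}\otimes\varphi_{\Sigma_1}\otimes\varphi_{\Sigma_2}$ of the Weil--Deligne group.

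For non-tempered $\Pi_v$ or $\Sigma_v$, one writes the representation as a Langlands quotient of a parabolically induced representation from a tempered piece on a proper Levi, and analyses the branching via Mackey theory applied to the double-coset space $H_v\backslash(G_v\times\tilde H_v)/P$. Each orbit contributes a piece controlled by the tempered theorem on a smaller group, and one must check that exactly one orbit yields a non-zero contribution precisely when $\varepsilon_v=+1$. The main obstacle, I expect, is this non-tempered step: the orbit combinatorics for $H_v\backslash(G_v\times\tilde H_v)/P$ is delicate, and even when the dichotomy is established one still has to identify the distinguished packet member in terms of the Gross--Prasad epsilon recipe, which requires a careful analysis of Jacquet modules and of the Vogan parametrization in the non-generic supercuspidal regime where the Gan--Takeda classification is itself more subtle.
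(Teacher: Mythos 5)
The statement you are trying to prove is labelled as a \emph{conjecture} in the paper, and the paper does not give a proof: it merely records, in the remark immediately following, that the conjecture is known when $\chi_{\Pi,v}$ is a square in the group of characters of $F_v^\times$ (by reduction to the case of trivial similitude character and Waldspurger's theorem for $\SO(4)\times\SO(5)$, \cite{waldspurger12b}), and that the general case is expected from forthcoming work of Emory and Takeda. So there is no proof in the paper for your attempt to be compared against; the honest statement is that this is open in general.

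The gap in your argument sits exactly at the step the paper flags. You claim that the condition $\chi_{\Pi,v}\chi_{\Sigma_1,v}\chi_{\Sigma_2,v}=1$ lets you ``pass to the Spin (derived) subgroups and then descend to the corresponding special orthogonal groups.'' That condition only ensures triviality on the centre of the small group $H_v$; it does not make $\Pi_v$ descend along $\GSpin(5)\twoheadrightarrow\SO(5)$, whose kernel is the full similitude $\GL_1$. To descend you would need to twist $\Pi_v$ by a square root of $\chi_{\Pi,v}^{-1}$, which exists if and only if $\chi_{\Pi,v}$ is a square. Without that hypothesis the branching problem genuinely lives on the $\GSpin$ (similitude) groups and is not a theorem; this is precisely why the paper cites Emory \cite{emory19} and the forthcoming Emory--Takeda work rather than treating the conjecture as proved. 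Your closing paragraph on the non-tempered case is also beside the point for this paper: the situations where the conjecture is actually invoked (\cref{sit:pair}) have $\Pi$ non-CAP, hence $\Pi_v$ tempered for all $v$, so the delicate Langlands-quotient/Mackey analysis you sketch is not needed — but this does not repair the central-character gap above.
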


  This conjecture is known if $\chi_{\Pi,v}$ is a square in the group of characters of $F_v^\times$, since in this case we we may reduce to the case where $\chi_{\Pi,v}$ is trivial and apply the main theorem of \cite{waldspurger12b}. The general case will be treated in forthcoming work of Emory and Takeda (Emory, \emph{pers.comm.})

  \begin{remark}\label{rem:innerforms}
   We have only stated part of the conjecture here. Firstly, the restriction of $\Sigma_v$ from $\tilde{H}_v$ to $H_v$ can break up into a sum of multiple irreducible subrepresentations (forming an $L$-packet for $H_v$), and the conjecture predicts which of these $H_v$-summands supports the Gross--Prasad period. Moreover, if the root number is $-1$, then the conjecture also predicts the existence of non-zero local periods for the functorial transfers of $\Pi_v$ and $\Sigma_{v}$ to a specific non-split inner form of $G_v \times \tilde{H}_v$.
  \end{remark}

 \subsection{The global conjecture}

  Let $\Pi, \Sigma$ be cuspidal automorphic representations of $G(\AA_F)$ and of $\tilde{H}(\AA_F)$ such that $\chi_{\Pi} \cdot \chi_{\Sigma_1} \cdot \chi_{\Sigma_2} = 1$, and such that $\Pi_v$ and $\Sigma_{i, v}$ are tempered for all $v$. We suppose that $\Hom_{H_v}(\Pi_v \times \Sigma_{v}, \CC)$ is 1-dimensional for all $v$ (i.e.~that $\varepsilon_v = +1$ and $\Pi^{\sharp}_v = \Pi_v$ in the notation of \cref{conj:locGGP}).

  \begin{conjecture}[Global Gross--Prasad conjecture]
   \label{conj:GGP}
   The $H(\AA_F)$-invariant linear functional on $\Pi \otimes \Sigma$ given by
   \[ \mathcal{P}(\varphi,\sigma) = \int_{[H]} \varphi(\iota(h)) \sigma(h) \, \mathrm{d}h,\quad [H]\coloneqq H(F)Z_G(\AA) \backslash H(\AA) \]
   is non-zero if and only if $\Lambda(\Pi \times \Sigma, \tfrac{1}{2}) \ne 0$.
  \end{conjecture}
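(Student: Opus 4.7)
The plan is to deduce both directions of the equivalence from a refined Ichino--Ikeda-type identity expressing the square of the period $\mathcal{P}$ as an explicit product of local factors and the central $L$-value. Concretely, for factorisable vectors $\varphi = \otimes_v \varphi_v$ and $\sigma = \otimes_v \sigma_v$, I would aim to establish a formula of the shape
\[
 \frac{|\mathcal{P}(\varphi,\sigma)|^2}{\langle \varphi,\varphi\rangle\,\langle \sigma,\sigma\rangle} = \frac{C_{\Pi,\Sigma}}{|S_{\Pi,\Sigma}|} \cdot \frac{\Lambda(\Pi\times\Sigma,\tfrac12)}{\Lambda(\Pi,\mathrm{Ad},1)\,\Lambda(\Sigma_1,\mathrm{Ad},1)\,\Lambda(\Sigma_2,\mathrm{Ad},1)} \cdot \prod_v \alpha_v(\varphi_v,\sigma_v),
\]
where $C_{\Pi,\Sigma}$ captures archimedean and similitude data, $S_{\Pi,\Sigma}$ is the relevant global component group, and each $\alpha_v$ is the stabilised matrix-coefficient integral
\[
 \alpha_v(\varphi_v,\sigma_v) = \int_{H_v / Z_{G,v}} \langle \Pi_v(\iota(h))\varphi_v,\varphi_v\rangle\,\langle \Sigma_v(h)\sigma_v,\sigma_v\rangle\,\mathrm{d}h
\]
(suitably regularised at unramified places by removing the relevant local $L$-factors). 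A central feature of the Ichino--Ikeda machine is that $\alpha_v$ factors through, and when non-zero defines a generator of, the Hom-space of \cref{conj:locGGP}; under the given hypothesis that this space is one-dimensional with $\Pi_v^\sharp = \Pi_v$ at every place, test vectors can therefore be chosen locally with $\alpha_v(\varphi_v,\sigma_v)\neq 0$.

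Granting this refined formula, both implications are immediate. Non-vanishing of $\mathcal{P}$ at some $(\varphi,\sigma)$ forces non-vanishing of the right-hand side, hence of $\Lambda(\Pi\times\Sigma,\tfrac12)$. Conversely, if $\Lambda(\Pi\times\Sigma,\tfrac12)\neq 0$, then choosing local test vectors with $\alpha_v\neq 0$ at each place produces a pair with $\mathcal{P}(\varphi,\sigma)\neq 0$.

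The substantive content therefore lies in proving the refined formula, for which my approach would be to set up a Jacquet--Rallis-style relative trace formula on $G \times \tilde H$ carrying the $H$-period on one side, compared against a Rankin--Selberg-type integral for $\Pi\times\Sigma$ realising the central $L$-value on the other. The four principal technical steps are: (i) an unramified computation showing that the spherical Bessel integrals at good places compute the correct ratio of local $L$-factors, following the Bessel-model calculations available for $\SO_5 \times \SO_4$; (ii) smooth transfer between orbital integrals on the two sides of the comparison; (iii) the fundamental lemma for the unit of the spherical Hecke algebra, obtained by reduction to the $(\SO_5,\SO_4)$ setting together with Beuzart-Plessis-type techniques; and (iv) archimedean convergence and the stabilisation of $\alpha_\infty$.

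The hardest part is step (iii), together with the similitude bookkeeping that distinguishes the $\GSpin$-version from the $\SO$-version. While the $\SO_5 \times \SO_4$ case is essentially within reach of current techniques, the extension to $\GSp_4 \times (\GL_2 \times \GL_2)$ introduces delicate central-character and component-group issues that fix the constant $C_{\Pi,\Sigma}/|S_{\Pi,\Sigma}|$. A further subtle point is that the relative trace formula a priori mixes all members of a given global $L$-packet, so one must isolate the contribution of the specific representation $\Pi$ singled out by the local recipe of \cref{conj:locGGP}; this endoscopic separation, and the verification that the resulting distinguished member matches the one predicted locally, is where I expect the most serious obstacle to lie.
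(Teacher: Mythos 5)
This statement is a \emph{conjecture}, not a theorem of the paper: the paper offers no proof, and explicitly remarks that it ``is \cite[Conjecture 14.8]{grossprasad92}'' when $\chi_\Pi$ is trivial, that the refined version with local constants is the Ichino--Ikeda conjecture \cite{ichinoikeda10}, that both have been extended to arbitrary $\chi_\Pi$ by Emory \cite{emory19}, and that ``these conjectures are open in general'' with only the Yoshida-lift case settled by Gan--Takeda \cite{gantakeda11}. So there is no paper-internal argument to compare against, and any blind ``proof'' of this statement should be treated with suspicion from the outset.

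What you have written is not a proof but a restatement of the Ichino--Ikeda refined conjecture together with a high-level description of the standard relative-trace-formula program one would expect to use to attack it. The logical gap is that every load-bearing ingredient in your outline is itself an open problem in this setting: the refined identity you ``aim to establish'' is precisely Emory's generalisation of the Ichino--Ikeda conjecture and is strictly stronger than the statement to be proved; smooth transfer and the fundamental lemma for the relevant Jacquet--Rallis-type comparison on $\GSp_4 \times (\GL_2 \times_{\GL_1} \GL_2)$ are not available (the Beuzart-Plessis-type techniques you invoke were developed for unitary groups, and do not transport automatically to orthogonal or spin similitude groups); and the endoscopic isolation of the specific member of the global $L$-packet carrying the period is, as you yourself say, ``where I expect the most serious obstacle to lie.'' Acknowledging the obstacle is not the same as overcoming it. In short, your proposal reduces an open conjecture to a longer list of open conjectures; it is a reasonable research sketch, but it does not prove Conjecture~\ref{conj:GGP}, and the paper does not claim to prove it either — the authors explicitly build their subsequent results so as to be ``logically independent of the global Gross--Prasad conjecture.''
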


  If $\chi_\Pi$ is trivial (so $\Pi$ and $\Sigma$ factor through $\SO_5$ and $\SO_4$ respectively), this is \cite[Conjecture 14.8]{grossprasad92}; and a more precise conjecture, relating $|P(\varphi, \sigma)|^2$ to the product of $\Lambda(\Pi \times \Sigma, \tfrac{1}{2})$ and a collection of local terms depending on $\varphi$ and $\sigma$, is formulated in \cite{ichinoikeda10}. Both conjectures have been generalised to allow arbitrary $\chi_{\Pi}$ by Emory \cite{emory19}. These conjectures are open in general, but have been proved in \cite{gantakeda11} for $\Pi$ of Yoshida, or twisted Yoshida, type.

  We shall assume henceforth that $F = \QQ$, and we shall study the Gross--Prasad periods in the case of \emph{algebraic} automorphic representations $\Pi$, $\Sigma$ (contributing to coherent cohomology of Shimura varieties). Our results are thus \emph{logically independent} of the global Gross--Prasad conjecture; however, the conjecture serves as crucial motivation for studying the periods $\mathcal{P}(\varphi, \sigma)$. 

 \subsection{Interpolation regions at \texorpdfstring{$\infty$}{infty}}
  \label{sect:archGGP}

  Following \cite[\S 12]{grossprasad92}, we now make explicit the predictions of the local Gross--Prasad conjecture at $\infty$ for discrete-series representations (and certain limits of discrete series). Let $\Pi_\infty$ be a unitary irreducible representation of $\GSp_4(\RR)$, and $\Sigma_{1, \infty}$, $\Sigma_{2, \infty}$ unitary irreducible representations of $\GL_2(\RR)$, satisfying the following conditions:

  \begin{enumerate}[(i)]

   \item $\Pi_\infty$ is one of the representations of ``weight $(k_1, k_2)$'' in the sense of \cite{LSZ17}, for some integers $k_1 \ge k_2 \ge 2$. Thus $\Pi_\infty$ is a member of an archimedean $L$-packet $\{\Pi_\infty^{\mathrm{H}}, \Pi_\infty^{\mathrm{W}}\}$, with $\Pi_\infty^\mathrm{W}$ generic, and $\Pi_\infty^\mathrm{H}$ holomorphic discrete series (or limit of discrete series if $k_2 = 2$), corresponding to holomorphic vector-valued Siegel modular forms of weight $(k_1, k_2)$.

   \item $\Sigma_{1,\infty}$ and $\Sigma_{2, \infty}$ are the holomorphic discrete series, or limit of discrete series, representations corresponding to holomorphic modular forms of weights $c_1, c_2$, for some integers $c_i \ge 1$.

   \item The central characters of all three representations are trivial on $\RR_{>0}^\times$, and satisfy $\chi_{\Pi_\infty} \chi_{\Sigma_{1, \infty}} \chi_{\Sigma_{2, \infty}} = 1$ (implying $c_1 + c_2 = k_1 + k_2 \bmod 2$).

  \end{enumerate}
  As above we let $\Sigma_\infty = \Sigma_{1, \infty} \boxtimes \Sigma_{2, \infty}$. As an $H(\RR)$-representation, this is the direct sum of two irreducible representations $\Sigma_\infty^{\mathrm{H}} \oplus \Sigma_\infty^{\mathrm{M}}$, forming an $L$-packet for $H(\RR)$; here $\Sigma_\infty^{\mathrm{H}}$ is generated by modular forms which are either holomorphic in both variables or anti-holomorphic in both, and $\Sigma_\infty^{\mathrm{M}}$ is generated by the forms of ``mixed'' type (holomorphic in one variable and anti-holomorphic in the other).

  The predictions of the Gross--Prasad conjecture in this setting depend on which of the shaded regions in Figure \ref{fig:GGP} contains the point $(c_1, c_2)$. In the figure, the signs ``$+$'' or ``$-$'' indicate the local $\varepsilon$-factor in each region.

  \begin{itemize}

   \item For the regions $(a), (a'), (c), (d), (d'), (f)$, the sign is $+1$, and there is a non-trivial period for some representations $\Pi_\infty^\sharp$ and $\Sigma_\infty^{\sharp}$ in the $L$-packets of $\Pi_\infty$ and $\Sigma_\infty$. These are given as follows:

   \medskip
   \renewcommand{\arraystretch}{1.2}
   \begin{tabular}{ccccccc}
    \hline
    Region & $\Pi^\sharp_\infty$ & $\Sigma^\sharp_\infty$ &\quad& Region & $\Pi^\sharp_\infty$ & $\Sigma^\sharp_\infty$\\
    \hline
    $(a)$,$(a')$ & $\Pi_\infty^\mathrm{W}$ & $\Sigma_\infty^\mathrm{H}$ &&
    $(d)$,$(d')$ & $\Pi_\infty^\mathrm{W}$ &  $\Sigma_\infty^\mathrm{M}$\\
    $(c)$        & $\Pi_\infty^\mathrm{H}$ & $\Sigma_\infty^\mathrm{H}$ &&
    $(f)$        & $\Pi_\infty^\mathrm{W}$ & $\Sigma_\infty^\mathrm{H}$\\
    \hline
   \end{tabular}\medskip

   (The above table is closely related to the branching computations of \cite{harriskudla92}.)

   \item For the regions $(b), (b'), (e)$, the local sign is $-1$, so the GGP periods are zero for all representations in the $L$-packet of $\Pi_\infty \times \Sigma_\infty$.

  \end{itemize}

  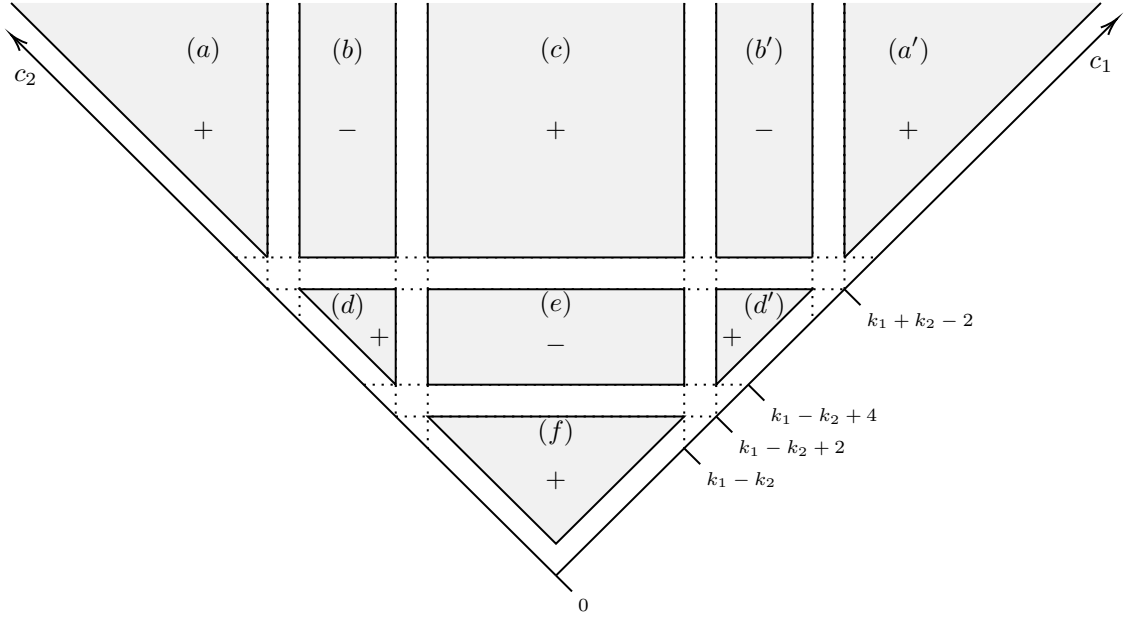
\begin{figure}[t]
   \caption{Regions in $(c_1, c_2)$ plane for archimedean Gross--Prasad}\medskip

   \tikzset{every picture/.style={line width=0.75pt}} 
\begin{tikzpicture}[x=0.6pt,y=0.6pt,yscale=-1,xscale=1]

 \draw    (340,360) -- (1.41,21.41) ;
 \draw [shift={(0,20)}, rotate = 405] [color={rgb, 255:red, 0; green, 0; blue, 0 }  ][line width=0.75]    (10.93,-3.29) .. controls (6.95,-1.4) and (3.31,-0.3) .. (0,0) .. controls (3.31,0.3) and (6.95,1.4) .. (10.93,3.29)   ;
 \draw    (340,360) -- (688.59,11.41) ;
 \draw [shift={(690,10)}, rotate = 495] [color={rgb, 255:red, 0; green, 0; blue, 0 }  ][line width=0.75]    (10.93,-3.29) .. controls (6.95,-1.4) and (3.31,-0.3) .. (0,0) .. controls (3.31,0.3) and (6.95,1.4) .. (10.93,3.29)   ;
 \draw  [dash pattern={on 0.84pt off 2.51pt}]  (240,0) -- (240,260) ;
 \draw  [dash pattern={on 0.84pt off 2.51pt}]  (260,0) -- (260,280) ;
 \draw  [dash pattern={on 0.84pt off 2.51pt}]  (420,0) -- (420,280) ;
 \draw  [dash pattern={on 0.84pt off 2.51pt}]  (440,0) -- (440,260) ;
 \draw  [dash pattern={on 0.84pt off 2.51pt}]  (220,240) -- (460,240) ;
 \draw  [dash pattern={on 0.84pt off 2.51pt}]  (160,180) -- (520,180) ;
 \draw  [dash pattern={on 0.84pt off 2.51pt}]  (140,160) -- (540,160) ;
 \draw  [dash pattern={on 0.84pt off 2.51pt}]  (180,0) -- (180,200) ;
 \draw  [dash pattern={on 0.84pt off 2.51pt}]  (160,0) -- (160,160) ;
 \draw  [dash pattern={on 0.84pt off 2.51pt}]  (500,0) -- (500,200) ;
 \draw  [dash pattern={on 0.84pt off 2.51pt}]  (520,0) -- (520,160) ;
 \draw  [fill={rgb, 255:red, 241; green, 241; blue, 241 }  ,fill opacity=1 ] (420,240) -- (260,240) -- (260,180) -- (420,180) -- cycle ;
 \draw  [draw opacity=0][fill={rgb, 255:red, 241; green, 241; blue, 241 }  ,fill opacity=1 ] (420,160) -- (260,160) -- (260,0) -- (420,0) -- cycle ;
 \draw  [draw opacity=0][fill={rgb, 255:red, 241; green, 241; blue, 241 }  ,fill opacity=1 ] (500,160) -- (440,160) -- (440,0) -- (500,0) -- cycle ;
 \draw  [draw opacity=0][fill={rgb, 255:red, 241; green, 241; blue, 241 }  ,fill opacity=1 ] (240,160) -- (180,160) -- (180,0) -- (240,0) -- cycle ;
 \draw  [draw opacity=0][fill={rgb, 255:red, 241; green, 241; blue, 241 }  ,fill opacity=1 ] (160,0) -- (160,160) -- (0,0) -- cycle ;
 \draw  [fill={rgb, 255:red, 241; green, 241; blue, 241 }  ,fill opacity=1 ] (240,180) -- (240,240) -- (180,180) -- cycle ;
 \draw  [fill={rgb, 255:red, 241; green, 241; blue, 241 }  ,fill opacity=1 ] (500,180) -- (440,240) -- (440,180) -- cycle ;
 \draw  [draw opacity=0][fill={rgb, 255:red, 241; green, 241; blue, 241 }  ,fill opacity=1 ] (680,0) -- (520,160) -- (520,0) -- cycle ;
 \draw  [fill={rgb, 255:red, 241; green, 241; blue, 241 }  ,fill opacity=1 ] (260,260) -- (420,260) -- (340,340) -- cycle ;
 \draw  [dash pattern={on 0.84pt off 2.51pt}]  (240,260) -- (440,260) ;
 \draw    (0,0) -- (160,160) ;
 \draw    (160,0) -- (160,160) ;
 \draw    (180,0) -- (180,160) ;
 \draw    (240,0) -- (240,160) ;
 \draw    (260,0) -- (260,160) ;
 \draw    (420,0) -- (420,160) ;
 \draw    (440,0) -- (440,160) ;
 \draw    (500,0) -- (500,160) ;
 \draw    (520,0) -- (520,160) ;
 \draw    (680,0) -- (520,160) ;
 \draw    (180,160) -- (240,160) ;
 \draw    (260,160) -- (420,160) ;
 \draw    (440,160) -- (500,160) ;
 \draw  [dash pattern={on 0.84pt off 2.51pt}]  (160,0) -- (160,180) ;
 \draw  [dash pattern={on 0.84pt off 2.51pt}]  (520,0) -- (520,180) ;
 \draw    (440,260) -- (450,270) ;
 \draw    (520,180) -- (530,190) ;
 \draw    (340,360) -- (350,370) ;
 \draw    (460,240) -- (470,250) ;
 \draw    (420,280) -- (430,290) ;

 \draw (340,80) node   [align=left] {$\displaystyle +$};
 \draw (120,80) node   [align=left] {$\displaystyle +$};
 \draw (210,80) node   [align=left] {$\displaystyle -$};
 \draw (0,40) node [anchor=north west][inner sep=0.75pt]   [align=left] {$\displaystyle c_{2}$};
 \draw (671,32) node [anchor=north west][inner sep=0.75pt]   [align=left] {$\displaystyle c_{1}$};
 \draw (532,193) node [anchor=north west][inner sep=0.75pt]  [font=\scriptsize] [align=left] {$\displaystyle k_{1} + k_{2} -2$};
 \draw (560,80) node   [align=left] {$\displaystyle +$};
 \draw (470,80) node   [align=left] {$\displaystyle -$};
 \draw (340,215) node   [align=left] {$\displaystyle -$};
 \draw (340,300) node   [align=left] {$\displaystyle +$};
 \draw (230,210) node   [align=left] {$\displaystyle +$};
 \draw (450,210) node   [align=left] {$\displaystyle +$};
 \draw (120,30) node   [align=left] {$\displaystyle ( a)$};
 \draw (560,30) node   [align=left] {$\displaystyle ( a')$};
 \draw (210,30.5) node   [align=left] {$\displaystyle ( b)$};
 \draw (470,30) node   [align=left] {$\displaystyle ( b')$};
 \draw (340,30) node   [align=left] {$\displaystyle ( c)$};
 \draw (340,189.5) node   [align=left] {$\displaystyle ( e)$};
 \draw (210,190.5) node   [align=left] {$\displaystyle ( d)$};
 \draw (470,190.5) node   [align=left] {$\displaystyle ( d')$};
 \draw (340,270) node   [align=left] {$\displaystyle ( f)$};
 \draw (452,273) node [anchor=north west][inner sep=0.75pt]  [font=\scriptsize] [align=left] {$\displaystyle k_{1} - k_{2} +2$};
 \draw (472,253) node [anchor=north west][inner sep=0.75pt]  [font=\scriptsize] [align=left] {$\displaystyle k_{1} - k_{2} +4$};
 \draw (432,293) node [anchor=north west][inner sep=0.75pt]  [font=\scriptsize] [align=left] {$\displaystyle k_{1} - k_{2}$};
 \draw (352,373) node [anchor=north west][inner sep=0.75pt]  [font=\scriptsize] [align=left] {0};

\end{tikzpicture}


%

   \label{fig:GGP}
  \end{figure}

  \begin{remark} \
   \begin{enumerate}[(i)]
    \item Note that regions $(b)$, $(b')$, $(d)$, $(d')$ and $(e)$ are empty in the case $k_2 = 2$ (when $\Pi_\infty$ is a non-regular limit of discrete series).

    \item The full conjecture predicts that in the sign $-1$ case we should obtain non-trivial periods after replacing $\Pi_\infty \times \Sigma_\infty$ with its Langlands transfer to a specific inner form of $(G \times H)_{/\RR}$. This inner form is $\GSpin(4, 1) \times \GSpin(4, 0)$ for regions $(b), (b')$, and $\GSpin(5, 0) \times \GSpin(4, 0)$ for region $(e)$. (Note that $\GSpin(4, 0) \cong D^\times \times_{\GL_1} D^\times$, where $D$ is the non-split quaternion algebra over $\RR$, and $\GSpin(4, 1)$ and $\GSpin(5, 0)$ both arise as unitary groups of rank 2 Hermitian spaces over $D$.)
    \qedhere
   \end{enumerate}
  \end{remark}

 \subsection{Local data at finite places}
  \label{sect:testvec}

  If $\ell$ is a finite prime, and $\Pi_\ell$ is generic, then the new-vector theory of \cite{robertsschmidt07} and \cite{okazaki} defines a canonical 1-dimensional subspace $\Pi_\ell^{\new} \subset \Pi_\ell$. Similarly, the more familiar new-vector theory for $\GL_2$ gives canonical lines in $\Sigma_{1, \ell}$ and $\Sigma_{2, \ell}$, so we obtain a canonical line $\left(\Pi_\ell \otimes \Sigma_{\ell}\right)^{\new}$ in the tensor product $\Pi_\ell \otimes \Sigma_\ell$.

  Unfortunately, if $c_\ell$ is a basis of $\Hom_{H_\ell}(\Pi_\ell \otimes \Sigma_\ell, \CC)$, it is not always true that $c_\ell$ restricts to a non-zero linear functional on this 1-dimensional subspace $(\Pi_\ell \otimes \Sigma_\ell)^{\new}$; and constructing explicit vectors in $\Pi_\ell \otimes \Sigma_\ell$ on which $c_\ell$ is non-vanishing (``test vectors'') is a difficult problem in general. However, it is clear that we can always find an element $\gamma_{\ell} = (\gamma_{0, \ell}, \gamma_{1, \ell}, \gamma_{2, \ell}) \in H_\ell\backslash (G_\ell \times \tilde{H}_\ell)$ such that $c_\ell$ is non-vanishing on the subspace $\gamma_\ell \cdot \left(\Pi_\ell \otimes \Sigma_{\ell}\right)^{\new}$, since the $\CC$-span of these vectors as $\gamma_\ell$ varies is the whole of $\Pi_\ell \otimes \Sigma_\ell$.

  \begin{remark}
   We can, of course, take $\gamma_\ell = 1$ for all but finitely many primes $\ell$, since the spherical vector is a test vector if $\Pi_\ell$ and $\Sigma_{\ell}$ are unramified.
  \end{remark}

  In our constructions, we shall interpolate Gross--Prasad periods in which the local vectors away from $p$ lie in $\gamma_\ell \cdot (\Pi_\ell \otimes \Sigma_\ell)^{\new}$, with $\Pi$ and $\Sigma$ varying in $p$-adic families, and $\gamma_\ell$ is some arbitrary but fixed element of $G_\ell \times \tilde{H}_\ell$ for each $\ell$. (The choice of local data at $p$ and at $\infty$ will require much closer attention, of course).

\section{Tensor products of Hida families}

 Here we shall briefly recall the theory of $p$-adic families of ordinary eigenforms for $\GL_2$ (Hida families), and explain how to form $p$-adic families of self-dual representations of $\GSp_4 \times \GL_2 \times \GL_2$ from these. These self-dual families will be the setting for our main conjectures and results in the later sections.

 \subsection{Automorphic representations of $\GL_2$}

  We briefly recall the dictionary between ``classical'' and ``adelic'' descriptions of modular forms (since there are multiple conventions in use, and we want to specify our conventions unambiguously).

  Let $\cH$ be the upper half-plane. We define the \emph{weight $k$ action} of $\GL_2^+(\QQ)$ on functions $f: \cH\to \CC$ by $(f \mid_k \stbt{a}{b}{c}{d})(\tau) = (ad-bc) (c\tau + d)^{-k} f(\tfrac{a\tau+b}{c\tau+ d})$. By an \emph{adelic modular form} of weight $k$ and level $U$, where $U$ is an open compact in $\GL_2(\Af)$, we mean a function $f: \GL_2(\Af) \times \cH \to \CC$ which satisfies $f(\gamma g u, -) = f(g, -) \mid_k \gamma^{-1}$ for all $\gamma \in \GL_2^+(\QQ)$ and $u \in U$, and such that $f(g, -)$ is bounded at $\infty$ for all $g$.

  Let $g = \sum_{n \ge 1} a_n(g) q^n$  be a normalised, cuspidal, new modular eigenform of weight $k$, level $N$ and character $\chi$; this is meant in the classical sense, so $g$ is a holomorphic function on the upper half-plane $\cH$, invariant under the weight $k$ action of $\Gamma_1(N)$, and transforming under the diamond operator $\langle d \rangle$ by $\chi(d)$, where $\langle d \rangle$ denotes any element of $\operatorname{SL}_2(\ZZ)$ congruent to $\stbt{\star}{\star}0 d\bmod N$.

  We can identify $g$ uniquely with an adelic modular form of level $K_1(N) \coloneqq \{\stbt{\star}{\star}{0}{1} \bmod N\}$ whose restriction to $\{1\} \times \cH$ is the original $g$; with some trepidation we denote this adelic form also by $g$. Let $\Sif'(g)$ denote the space of adelic modular forms spanned by the $\GL_2(\Af)$-translates of $g$. Then:

  \begin{itemize}
  \item There exists a (non-unitary) cuspidal automorphic representation $\Sigma'(g)$, whose lowest $K_\infty$-type subspace is $\Sif'(g)$.

  \item The central character of $\Sigma'(g)$ is the character $|\cdot|^{(2-k)/2} \widehat{\chi}$ of $\AA^\times / \QQ^\times$, where $|\cdot|$ is the ad\`ele norm.

  \item For $\ell \nmid N$, the Hecke operator $T_\ell$, given by the double coset $\GL_2(\ZZ_\ell) \stbt{\varpi_\ell}{}{}{1} \GL_2(\ZZ_\ell)$, acts on the $\GL_2(\ZZ_\ell)$-invariants of $\Sigma'_\ell(g)$ as $a_\ell(g)$.

  \item The form $g$ is the new vector of $\Sif'(g)$ (the unique vector stable under $K_1(N)$).
  \end{itemize}

  We let $\Sigma(g) = \Sigma'(g) \otimes |\cdot|^{(k-2)/2}$, which is unitary; note, however, that $\Sif'(g)$ is always definable over the field $\QQ(\{ a_\ell(g): \ell \nmid N\})$, which is a finite extension of $\QQ$, while $\Sif(g)$ is not definable over a number field if $k$ is odd.

 \subsection{Hida families}
  \label{sect:hida}

  We fix (for the remainder of this paper) a prime $p > 2$, and an isomorphism $\QQbar_p \cong \CC$. We let $\cO$ be the ring of integers of some finite extension $L /\Qp$. Let $\Lambda$ be the Iwasawa algebra $\cO[[\Zp^\times]]$. We shall consider a $\Lambda$-algebra $\II$ which is a normal integral domain, and is finite and projective as a $\Lambda$-module. Thus $\II$ is a complete local ring. Extending $L$ if necessary, we may suppose $L$ is the maximal algebraic extension of $\Qp$ contained in $\operatorname{Frac} \II$.

  An \emph{arithmetic point} is a homomorphism of $\cO$-algebras $\phi: \II \to \bar{L}$ whose restriction to group-like elements $\Zp^\times \subset \Lambda^\times$ is given by $x \mapsto x^{k_\phi} \chi_{\phi}(x)$, for some $k_{\phi} \ge 1$ and finite-order character $\chi_{\phi}$; we denote this simply by $k_\phi + \chi_{\phi}$.

  \begin{definition}
   Let $N \ge 1$ be coprime to $p$. A \emph{cuspidal Hida family} of tame level $N$, with coefficients in $\II$, is a formal power series $\cG = \sum a_n q^n \in \II[[q]]$, with $a_1 = 1$ and $a_p \in \II^\times$, such that for all arithmetic points $\phi$ such that $k_\phi \ge 2$, the specialisation $\phi(\cG)$ is the $q$-expansion of a normalised Hecke eigenform $g_{\phi} \in S_{k_{\phi}}(\Gamma_1(Np^c))$ for some $c \ge 1$, which is new at $N$, and on which the diamond operators at $p$ act via $\chi_{\phi}$.
  \end{definition}

  \begin{remark}
   The term \emph{Hida family} is used here in a slightly different sense than in \cite{KLZ17} for example; the objects we call ``families'' here were called ``branches'' in \emph{op.cit.}, so they correspond to minimal prime ideals (not to maximal ideals) of the ordinary Hecke algebra.
  \end{remark}

  Note that we do permit, but do not require, the specialisations at weight 1 arithmetic points to be classical modular forms. We let $\fX^{\cl}(\cG)$ denote the set of arithmetic points $\phi$ such that $g_{\phi}$ is classical (hence including all $\phi$ with $k_{\phi} \ge 2$, and possibly also some $\phi$ with $k_{\phi} = 1$). We say $\phi \in \fX^{\cl}(\cG)$ is \emph{crystalline} if $g_{\phi}$ is the ordinary $p$-stabilisation of a newform of level $N$, and we let $\fX^{\cris}(\cG)$ be the set of crystalline points. Note that a crystalline point must have $\chi_{\phi}$ trivial, and this is also a sufficient condition if $k_{\phi} \ne 2$. (When $k_\phi = 2$ this fails, since some specialisations with $k_\phi = 2$ and $\chi_{\phi}$ trivial may have local components at $p$ which are unramified twists of the Steinberg representation.)

  Any Hida family $\cG$ has a well-defined tame character $\chi_{\cG}$, which is a Dirichlet character of conductor dividing $N$. It also has a well-defined ``weight-character'' at $p$, which is a residue class $\bar{c}$ modulo $p-1$, corresponding to the component of $\operatorname{Spec} \Lambda$ over which $\II$ lies. Note that $(-1)^{\bar{c}} = \chi_{\cG}(-1)$.

  \begin{theorem}[Hida, Wiles]
   Any classical $p$-ordinary eigenform is the specialisation of some Hida family. If the eigenform has weight $\ge 2$ then this family is unique.
  \end{theorem}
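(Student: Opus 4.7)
The plan is to deduce this from Hida's control theorem for the ordinary Hecke algebra, together with a lifting argument (due to Wiles, building on Deligne--Serre) in the weight~$1$ case.

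First, fix the tame level $N$ and let $\mathbf{T}^{\ord}_N$ be Hida's big ordinary cuspidal Hecke algebra of tame level $N$, acting on the inverse limit $\varprojlim_c e_{\ord} S_2(\Gamma_1(Np^c); \cO)$ where $e_{\ord}$ is Hida's ordinary projector. The key input is Hida's control theorem: $\mathbf{T}^{\ord}_N$ is a finite flat $\Lambda$-algebra, and for every arithmetic point $\phi$ of weight $k_\phi \geq 2$ and character $\chi_\phi$, the specialisation $\mathbf{T}^{\ord}_N \otimes_{\Lambda, \phi} \bar{L}$ is canonically identified with the $\bar{L}$-algebra generated by the Hecke operators acting on $e_{\ord} S_{k_\phi}(\Gamma_1(Np^c); \bar{L})_{\chi_\phi}$, for $c$ sufficiently large.

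Now suppose $g$ is a classical $p$-ordinary normalised eigenform of weight $k \geq 2$, new at tame level $N$. Its Hecke eigensystem defines an $\cO$-algebra homomorphism $\lambda_g: \mathbf{T}^{\ord}_N \to \bar{L}$ whose restriction to $\Lambda$ is the arithmetic character $k + \chi_g^{(p)}$, where $\chi_g^{(p)}$ is the $p$-part of the nebentype. Let $\fp = \ker \lambda_g$, a prime of $\mathbf{T}^{\ord}_N$, and choose a minimal prime $\fa \subset \fp$. Then $\mathbf{T}^{\ord}_N / \fa$ is a domain which is finite over $\Lambda$; its normalisation $\II$ in $\operatorname{Frac}(\mathbf{T}^{\ord}_N/\fa)$ is a normal integral domain, finite and projective over $\Lambda$ (after possibly enlarging $L$ so that $\II$ is local). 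The tautological map $\mathbf{T}^{\ord}_N \to \II$ sends each $T_\ell$ to an element $a_\ell \in \II$, and the power series $\cG \coloneqq \sum a_n q^n$ is by construction a Hida family specialising to $g$ at $\lambda_g$: indeed, for any arithmetic point $\phi$ of $\II$ of weight $\geq 2$, the composition $\mathbf{T}^{\ord}_N \to \II \xrightarrow{\phi} \bar{L}$ lands in the classical Hecke algebra by the control theorem, producing a classical eigenform $g_\phi$ of the required type.

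For the uniqueness assertion when $k \geq 2$, the point is that the classical weight-$k$ Hecke algebra at the relevant tame and wild level is semisimple, because the space of classical modular forms has a basis of eigenforms and distinct newforms (with their ordinary $p$-stabilisations) have distinct eigensystems. Thus $\mathbf{T}^{\ord}_N \otimes_{\Lambda, \phi_k} \bar{L}$ is a product of fields in a neighbourhood of $\lambda_g$, so the minimal prime $\fa$ contained in $\fp = \ker \lambda_g$ is unique; any two Hida families through $g$ must arise from the same minimal prime and hence agree after normalisation.

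The main obstacle is the weight-$1$ case, which we permit in our definition. Here the control theorem no longer guarantees that arithmetic specialisations of weight $1$ are classical, so one cannot simply read off classicality from $\mathbf{T}^{\ord}_N$. The existence statement in weight~$1$ is Wiles' lifting theorem: given a classical ordinary weight~$1$ eigenform $g$, its associated Galois representation is unramified at $p$, and one lifts the corresponding point of the weight-$1$ eigencurve to a component which has dense classical (weight $\geq 2$) specialisations; this produces a Hida family through $g$. Uniqueness genuinely fails in this range --- two distinct Hida families can meet at a weight-$1$ classical point (the ``RM'' phenomenon) --- which is exactly why we restrict the uniqueness clause to $k_\phi \geq 2$.
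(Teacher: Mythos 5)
The paper states this theorem without proof, citing Hida and Wiles, so there is no in-paper argument to compare against; your sketch is a correct outline of the standard proof and it aligns with the paper's own remark that Hida families correspond to minimal primes of the ordinary Hecke algebra $\mathbf{T}^{\ord}_N$. For the uniqueness step it is worth being slightly more careful than ``the fibre is semisimple, so the minimal prime is unique'': the precise argument is that reducedness of the weight-$k_\phi$ fibre at $\lambda_g$, combined with flatness of $\mathbf{T}^{\ord}_N$ over $\Lambda$, makes $(\mathbf{T}^{\ord}_N)_{\mathfrak{p}}$ unramified and hence \'etale over the discrete valuation ring $\Lambda_{\mathfrak{p}_\Lambda}$ (where $\mathfrak{p}_\Lambda$ is the arithmetic prime below $\mathfrak{p}$), so $(\mathbf{T}^{\ord}_N)_{\mathfrak{p}}$ is itself a DVR and in particular a domain, which is what forces the minimal prime contained in $\mathfrak{p}$ to be unique; the semisimplicity of the classical Hecke algebra is used exactly to establish that reducedness, and the remark that ordinarity forces $\alpha \neq \beta$ in weight $\ge 2$ (since $v_p(\alpha) = 0$ while $v_p(\alpha\beta) = k-1 > 0$) is the reason the $p$-stabilised oldforms introduce no nilpotents. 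Your weight-$1$ discussion via the eigencurve is a valid, if somewhat anachronistic, re-packaging of Wiles' original argument, and the RM phenomenon you cite is indeed the known source of non-uniqueness there.
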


  \begin{notation}
   For $\cG$ a Hida family over $\II$ and $\phi \in\fX^{\cl}(\cG)$, we write $\Sigma(\cG, \phi) = \bigotimes'_v \Sigma_\ell(\cG, \phi)$ for the unitary automorphic representation of $\GL_2(\AA)$ corresponding to $g_{\phi}$, and $\Sigma'(\cG, \phi)$ for its arithmetic twist.
  \end{notation}

 \subsection{Self-dual twists}

  Let $\cG_1$, $\cG_2$ be two Hida families (of possibly different tame levels $N_1, N_2$ and coefficient rings $\II_1, \II_2$). Let $\II = \II_1 \mathop{\hat\otimes}_{\cO} \II_2$.

  Since the central characters at $p$ vary over specialisations of $\cG_1$ and $\cG_2$, we will need to make one extra choice\footnote{This construction is somewhat modelled on the ``square root'' constructions appearing in the theory of big Heegner points \cite{howard07}.}. If the tame characters satisfy $\chi_{\cG_1}(-1) \chi_{\cG_2}(-1) = 1$, then we let $\bar{c}$ be one of the two residue classes modulo $p-1$ with $\bar{c}_1 + \bar{c}_2 = 2\bar{c}$. This choice determines a character $\bc: \Zp^\times \to \II^\times$ with $\bc_1 + \bc_2 = 2\bc$, where $\bc_i$ are the universal characters. Similarly, if $\chi_{\cG_1}(-1) \chi_{\cG_2}(-1) = -1$, we choose $\bc$ such that $\bc_1 + \bc_2 = 2\bc + 1$.

  In either case, we define a \emph{classical point} of $\II$ to be a ring homomorphism $\phi = \phi_1 \otimes \phi_2: \II\to \QQbar_p$ such that the $\phi_i$ are classical points and, in addition, $\phi \circ \bc$ is a locally-algebraic character. If $\phi_i$ has weight-character $c_i + \chi_i$, then this amounts to assuming that $c_1 + c_2$ has the same parity as $\bar{c}_1 + \bar{c}_2$ and $\chi_1(-1)\chi_2(-1) = 1$. We then have $\phi \circ \bc = \lfloor \frac{c_1 + c_2}{2}\rfloor + \tau_{\phi}$ for some finite-order character $\tau_{\phi}$ which is a square root of $\chi_1\chi_2$.  We say that $\phi$ is \emph{crystalline} if $\chi_1$, $\chi_2$ and $\tau_{\phi}$ are all trivial; note that this forces $c_1 + c_2$ to lie in a specific congruence class modulo $2p-2$.

  \begin{notation}
   \label{not:tripleG}
   We write $\ucG$ for the data of a triple $(\cG_1, \cG_2, \bc)$ as above, and $\fX^{\cl}(\ucG)$, $\fX^{\mathrm{cris}}(\ucG)$ the sets of its classical and crystalline points. For $\phi \in \fX^{\cl}(\ucG)$ we write
   \begin{align*}
    \Sigma_1(\ucG, \phi) &\coloneqq \Sigma(\cG_1, \phi_1),&
    \Sigma_2(\ucG, \phi) &\coloneqq \Sigma(\cG_2, \phi_2) \otimes \widehat{\tau}_\phi^{-1},&
    \Sigma(\ucG, \phi) &\coloneqq  \Sigma_1(\ucG, \phi) \boxtimes \Sigma_2(\ucG, \phi).
   \end{align*}
   We write simply $\Sigma(\phi)$ if $\ucG$ is clear from context.
  \end{notation}

  Here $\hat\tau_{\phi}$ is the adelic character associated to the Dirichlet character $\tau_{\phi}$, as before. Note that the central character of $\Sigma(\phi)$ is $\widehat{\chi}_{\ucG} = \widehat{\chi}_{\cG_1} \widehat{\chi}_{\cG_2}$; in particular, it is independent of $\phi$, and is unramified at $p$ (this is the purpose of the twist $\widehat{\tau}_{\phi}^{-1}$).

  \begin{notation}
   We say $\phi \in \fX^{\cl}$ is \emph{fully ramified} if the finite-order characters $\tau_{\phi}$ and $\chi_1/\tau_{\phi}$ of $\ZZ_p^\times$ are both non-trivial, so the local Rankin--Selberg $L$-factor $L(\Sigma_p(\phi), s)$ is identically 1; and we let $\fX^{\mathrm{ram}}$ be the set of fully ramified points.
  \end{notation}

 \subsection{Constancy of root numbers} We consider a pair $(\Pi, \ucG)$ as follows:

  \begin{situation}
   \label{sit:pair}
   Let $\Pi$ be a cuspidal automorphic representation of $\GSp_4$, with $\Pi_\infty$ of weight $(k_1, k_2)$ for some $k_1 \ge k_2 \ge 2$, as in \S\ref{sect:archGGP}; and let $\ucG = (\cG_1, \cG_2, \bc)$ be a triple as in \cref{not:tripleG}, with $\chi_{\Pi} \cdot \chi_{\cG_1} \cdot \chi_{\cG_2} = 1$. We suppose, in addition, that $\Pi$ is not a CAP representation, and that $\Pi_p$ is unramified.
  \end{situation}

  This will be the setting for all the theorems and conjectures in the remainder of this paper. Since $\Pi$ is not CAP, it is either of type A (general type) or B (Yoshida type) in Arthur's classification \cite{arthur04,geetaibi18}; in particular, $\Pi_v$ is tempered for every $v$.

  \begin{lemma}
   Let $\ell$ be a finite prime. Then the local root number $\varepsilon_\ell(\Pi_\ell \times \Sigma_\ell(\ucG, \phi))$ for $\phi \in \fX^{\cl}(\ucG)$ is independent of the specialisation $\phi$, and we write it as $\varepsilon_\ell(\Pi \times \ucG)$. If $\ell$ does not divide $\gcd(N_1, N_2)$, or if $\Pi_\ell$ is unramified (including $\ell = p$), then we have $\varepsilon_\ell(\Pi \times \ucG) = 1$.
  \end{lemma}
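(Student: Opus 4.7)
The plan rests on the observation that for every classical $\phi$, the identity $\chi_\Pi\chi_{\cG_1}\chi_{\cG_2}=1$ combined with the compensating twist $\widehat{\tau}_\phi^{-1}$ on $\Sigma_2$ built into \cref{not:tripleG} makes the $8$-dimensional local Weil--Deligne representation $V(\phi)\coloneqq V_\Pi\otimes V_{\Sigma_1(\ucG,\phi)}\otimes V_{\Sigma_2(\ucG,\phi)}$ self-dual at $\ell$, so that $\varepsilon_\ell(V(\phi))\in\{\pm1\}$ for every $\phi\in\fX^{\cl}(\ucG)$.

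I would first handle the second assertion by an explicit decomposition of $V(\phi)$. Suppose $\Sigma_{1,\ell}$ is unramified (the case $\Sigma_{2,\ell}$ unramified is symmetric). Then the Weil representation $V_{\Sigma_1(\ucG,\phi)}$ splits as a sum of two Frobenius-eigencharacters $\mu_1\oplus\mu_2$ with $\mu_1\mu_2=\det V_{\Sigma_1(\ucG,\phi)}$, giving $V(\phi)=W_1\oplus W_2$ with $W_i\coloneqq V_\Pi\otimes\mu_i\otimes V_{\Sigma_2(\ucG,\phi)}$. Using $V_\Pi^\vee\cong V_\Pi\otimes\chi_\Pi^{-1}$ (the symplectic form on the $\GSp_4$ spin representation) and the analogous identity for $V_{\Sigma_2(\ucG,\phi)}$, a short check based on the central-character relation $\chi_\Pi\chi_{\cG_1}\chi_{\cG_2}=1$ (with both sides of this identity interpreted at $\Frob_\ell$, noting that the $\widehat{\tau}_\phi^{-1}$-twist introduces a compensating factor $\widehat{\tau}_\phi^{-2}=(\widehat{\chi_1\chi_2})^{-1}$ in $\det V_{\Sigma_2(\ucG,\phi)}$) yields $W_2\cong W_1^\vee$. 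The local functional equation $\varepsilon(W,\tfrac12)\,\varepsilon(W^\vee,\tfrac12)=1$ then gives $\varepsilon_\ell(V(\phi))=+1$. The case where $\Pi_\ell$ is unramified is entirely parallel: one decomposes $V_\Pi$ into its four Frobenius-eigencharacters $\gamma_1,\dots,\gamma_4$, which pair under the symplectic form on the spin representation as $\gamma_1\gamma_4=\gamma_2\gamma_3=\chi_\Pi(\Frob_\ell)$, producing two mutually dual pairs of summands of $V(\phi)$ whose $\varepsilon$-factors multiply to $+1$.

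For the first assertion, part (ii) already forces $\varepsilon_\ell(V(\phi))$ to equal the constant $+1$ whenever $\ell\nmid\gcd(N_1,N_2)$ or $\Pi_\ell$ is unramified. The remaining ``all-ramified'' case is handled by a rigidity argument: at $\ell\neq p$ the restriction $V(\phi)|_{I_\ell}$ is independent of $\phi$ (Hida families preserve inertial types at tame primes $\ell\mid N_i$, and $\widehat{\tau}_\phi^{-1}$ is unramified at $\ell\neq p$ since $\tau_\phi$ has $p$-power conductor), so the conductor $a(V(\phi))$ is constant and the only variation in $V(\phi)$ is through the Frobenius eigenvalues. These vary rigid-analytically over the parameter space $\operatorname{Spf}\II$, making $\varepsilon_\ell(V(\phi))$ a rigid-analytic function with values in $\{\pm1\}$ on the Zariski-dense subset $\fX^{\cl}(\ucG)$; by continuity and irreducibility of the parameter space, it is forced to be constant. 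The subtle step is rigorously interpolating the local $\varepsilon$-factor across the whole Hida family; an alternative, more hands-on approach would extend the character-decomposition argument of part (ii) directly to both $V_{\Sigma_i}$ simultaneously, yielding a very explicit proof for ramified principal series factors, but requiring additional care whenever Steinberg or supercuspidal local components appear at $\ell$.
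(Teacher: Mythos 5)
Your treatment of the second assertion --- splitting $V(\phi)$ into two mutually dual summands when one of $\Pi_\ell$, $\Sigma_{1,\ell}$, $\Sigma_{2,\ell}$ is unramified and invoking $\varepsilon(W,\tfrac12)\varepsilon(W^\vee,\tfrac12)=1$ --- is a correct expansion of what the paper dismisses as ``clearly $+1$,'' and the handling of $\ell=p$ matches the paper. The overall structure (self-duality, then unramified cases, then the all-ramified case) is also the paper's structure.

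The genuine gap is exactly where you flag ``the subtle step.'' Your rigidity argument needs the $\varepsilon$-factor to be a rigid-analytic function of $\phi$, but constancy of the inertial restriction plus analytic variation of Frobenius eigenvalues is not, on its own, enough to conclude this; you would have to observe that $V(\phi)$ is a \emph{fixed} Weil--Deligne representation $V_0=V_\Pi\otimes V_{\pi_1}\otimes V_{\pi_2}$ twisted by an unramified character $\chi^{\mathrm{ur}}_\phi$ (using Dimitrov's result that $\Sigma_{i,\ell}(\phi_i)\cong\pi_{i,\ell}\otimes\phi_i(\alpha_{i,\ell})$ for fixed $\pi_{i,\ell}$), so that $\varepsilon(V(\phi))=\chi^{\mathrm{ur}}_\phi(\Frob_\ell)^{a(V_0)+n\dim V_0}\,\varepsilon(V_0)$ is manifestly analytic. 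But once you have written $V(\phi)=V_0\otimes\chi^{\mathrm{ur}}_\phi$, the paper's shortcut becomes available: the determinant constraint forced by $\chi_\Pi\chi_{\Sigma_1(\phi)}\chi_{\Sigma_2(\phi)}=1$ makes $(\chi^{\mathrm{ur}}_\phi)^2$ independent of $\phi$, and since $\fX$ is connected so is $\chi^{\mathrm{ur}}_\phi$ itself (this is where the compensating twist $\widehat\tau_\phi^{-1}$ built into \cref{not:tripleG} earns its keep). Thus in the only remaining case --- $\ell\mid\gcd(N_1,N_2)$ with both $\cG_i$ special or supercuspidal at $\ell$, precisely the case your ``hands-on alternative'' runs into trouble with --- the local tensor product $V(\phi)$ is literally independent of $\phi$, and constancy of $\varepsilon_\ell$ follows with no interpolation of $\varepsilon$-factors required. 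I would recommend replacing the rigidity argument with this cancellation-of-twists observation, which is both shorter and does not leave a hole.
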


  \begin{proof}
   The case of $\ell = p$ is easily disposed of: in this case the local root numbers are automatically $+1$, since we are assuming $\Pi_p$ to be unramified. So we may suppose $\ell \ne p$. Recall that generic irreducible smooth representations of $\GL_2(\QQ_\ell)$ can be divided into three ``species'' (terminology from \cite{loefflerweinstein11}): either principal series, special (twist of Steinberg), or supercuspidal. It follows from results of Hida and Dimitrov \cite[\S 6.2]{dimitrov14} that for $\ell \ne p$, the species of $\Sigma_{i, \ell}(\phi_i)$ is independent of the arithmetic specialisation $\phi_i$.

   If either of the families $\cG_i$ is principal-series at $\ell$, or if $\Pi_\ell$ is unramified, then all of the root numbers are clearly $+1$. So we may assume that $\ell \mid (N_1, N_2)$ and both of the $\cG_i$ are supercuspidal or special at $\ell$. It follows from the results quoted above that the local representations are given by families of twists of a fixed representation: we can find representations $\pi_{i, \ell}$ with coefficients in $\QQbar_p$, and elements $\alpha_{i, \ell} \in \II_i^\times$, such that $\Sigma_{i, \ell}(\phi_i) = \pi_{i, \ell} \otimes \phi_i(\alpha_{i, \ell})$ for all arithmetic specialisations $\phi_i$. Hence the representation $\Sigma_{\ell}(\phi)$ is actually independent of $\phi$, since the twist $\tau_{\phi}$ cancels out the twists $\alpha_{i, \ell}$. So the root number is certainly constant over the family.
  \end{proof}

  \begin{definition}
   \label{not:defindef}
   We say the pair $(\Pi, \ucG)$ is
   \begin{itemize}
   \item \emph{split} if $\varepsilon_\ell(\Pi \times \ucG) = 1$ for all finite primes $\ell \ne p$;
   \item \emph{indefinite} if $\# \{ \ell: \varepsilon_\ell(\Pi \times \ucG) = -1\}$ is even (including the split case);
   \item \emph{definite} if $\# \{ \ell: \varepsilon_\ell(\Pi \times \ucG) = -1\}$ is odd.
   \end{itemize}
  \end{definition}

\section{Conjectures I: $p$-adic $L$-functions for $+1$ regions}

 \subsection{A general  conjecture for sign $+1$ regions}

  Let $(\Pi, \ucG)$ be as in \cref{sit:pair}. We suppose in addition that $(\Pi, \ucG)$ is split, and that $\Pi_p$ is \emph{ordinary}, i.e.~its Hecke parameters $(\alpha, \beta, \gamma, \delta)$ at $p$ have valuations $(0, k_2 - 2, k_1 - 1, k_1 + k_2 - 3)$ respectively.

  \begin{notation}
   We decompose the set $\fX^{\cl}(\ucG)$ of classical specialisations of $\ucG$ as
   \[ \fX^{\cl} = \fX_{a} \sqcup \fX_{a'} \sqcup \dots \sqcup \fX_{f}, \]
   according to the regions in \cref{fig:GGP}. For $\spadesuit \in \{a, \dots, f\}$, let $\varepsilon_\infty(\spadesuit)$ denote the corresponding Archimedean root number (i.e.~$-1$ if $\spadesuit \in \{ b, b', e\}$ and $+1$ for the other six regions).
  \end{notation}

  \begin{conjecture}
   \label{conj:pGGP}
   Suppose $\spadesuit$ is a region such that $\varepsilon_\infty(\spadesuit) = +1$, so that all specialisations $\Pi \times \Sigma(\phi)$ with $\phi \in \fX_\spadesuit$ have global root number $+1$. Then there should exist an element
   \[ \cL_p(\Pi \times \ucG, \spadesuit) \in \operatorname{Frac}(\II)\]
   whose values at $\phi \in \fX_{\spadesuit}$ satisfy
   \[ \cL_p(\Pi \times \ucG, \spadesuit)(\phi) = \mathcal{E}_p^{\spadesuit}(\Pi_p \times \Sigma_p(\phi)) \cdot \frac{\cP(\varphi \times \sigma(\phi))}{\Omega_\infty^{\spadesuit}(\Pi \times \Sigma(\phi))}\]
   where $\mathcal{E}_p^{\spadesuit}(\Pi_p \times \Sigma_p(\phi))$ is an Euler factor at $p$, and $\Omega_\infty^{\spadesuit}(\dots)$ an Archimedean period, and $\varphi$ and $\sigma(\phi)$ are appropriate vectors in $\Pi \times \Sigma(\phi)$, whose components at places $v \notin \{p, \infty\}$ are the translates of the new vectors by a fixed element $\gamma_v \in H_v \backslash (G_v \times \tilde{H}_v)$.
  \end{conjecture}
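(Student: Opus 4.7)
I would attack the conjecture one region at a time, since there is no uniform construction in sight. Regions $(a), (a')$ are out of reach with current tools and I would leave them aside; I would focus on region $(f)$, which is the most tractable. In region $(f)$ the archimedean avatar is $\Pi_\infty^{\mathrm{W}} \boxtimes \Sigma_\infty^{\mathrm{H}}$: a generic discrete series for $\GSp_4$ paired with fully holomorphic forms for $\GL_2 \times \GL_2$. This is exactly the combination handled by Pilloni's higher Hida theory for the Siegel threefold, combined with classical Hida theory for the two $\cG_i$. The full Borel-ordinarity hypothesis on $\Pi_p$ is used here to pin down a unique Klingen-ordinary $p$-stabilisation of $\Pi$.

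\textbf{Construction.} The plan is to realise the global Gross--Prasad period as a cohomological pairing and to interpolate that pairing $p$-adically. Concretely: fix an integral cohomology class representing $\Pi$ in $H^i(S_G, \omega_\Pi)$ for the automorphic coherent sheaf $\omega_\Pi$ on the Siegel threefold $S_G$ and the degree $i$ dictated by higher Hida theory. Pull back along the closed embedding $\iota\colon S_H \hookrightarrow S_G$, and decompose $\iota^* \omega_\Pi$ via the archimedean branching rule of \cite{harriskudla92}, retaining the summand that contributes in region $(f)$. On the $\tilde H$-side, the twisted family $\cG_1 \boxtimes \cG_2$ (with the self-dual adjustment $\widehat\tau_\phi^{-1}$ built into \cref{not:tripleG}) lives as a section of a $\Lambda$-adic sheaf of Hilbert-type modular forms on $S_H$. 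I would match the Hodge weights of this section to the relevant summand of $\iota^*\omega_\Pi$ by applying a $p$-adic (nearly-overconvergent) Maass--Shimura operator, then form the cup product and take the trace to obtain a candidate element of $\operatorname{Frac}(\II)$.

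\textbf{Interpolation.} At a classical point $\phi \in \fX_{(f)}$, the cohomological pairing above unfolds to the adelic Gross--Prasad integral $\cP(\varphi, \sigma(\phi))$, using the translates $\gamma_\ell \cdot (\Pi_\ell \otimes \Sigma_\ell)^{\new}$ at finite $\ell \neq p$ as test data (see \S\ref{sect:testvec}). The local zeta integral at $p$, computed against the Klingen-ordinary vector and the two $p$-ordinary vectors for $\Sigma_1, \Sigma_2$, produces $L_p(\Pi \times \Sigma_p(\phi), \tfrac{1}{2})$ up to the Euler factor $\mathcal{E}_p^{(f)}(\Pi_p \times \Sigma_p(\phi))$ that accounts for the difference between the spherical and ordinary local integrals. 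The archimedean period $\Omega_\infty^{(f)}$ is defined as the ratio needed to rationalise the pairing against the chosen algebraic basis of $H^i(S_G, \omega_\Pi)$, as in the Harder--Shimura framework.

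\textbf{Main obstacle.} The most delicate step is showing that the resulting element genuinely lies in $\operatorname{Frac}(\II)$ rather than in some larger ring. In higher Hida theory the Klingen-ordinary projection is bounded only after inverting a specific Hecke operator, and interleaving this projection with a nearly-overconvergent differential operator tends to introduce unwanted denominators along the boundary of $\fX_{(f)}$; controlling these will require a careful compatibility between the Frobenius eigenvalue stratification on $S_G$ and the Hida-theoretic slope decomposition on the two $\GL_2$-factors. A secondary difficulty is the explicit evaluation of the archimedean branching constants, which pins down the sign and normalisation of $\mathcal{E}_p^{(f)}$; this amounts to an honest Whittaker-versus-holomorphic integral on $H(\RR)$ that, to my knowledge, has not been written down in the literature in exactly the form needed here.
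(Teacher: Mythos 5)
The statement you are addressing is labelled \emph{Conjecture} in the paper, and indeed the paper offers no proof of it in full generality; what it does provide is a construction (Theorem in \S\ref{sect:pLf}) that verifies the conjecture in region $(f)$ only, under additional hypotheses ($\Pi$ globally generic, $k_2 \geq 4$, $(\Pi,\ucG)$ split, $\Pi_p$ Klingen-ordinary). Your proposal is therefore correctly calibrated in scope: you rightly identify that a uniform argument is not available, that $(a),(a')$ are out of reach, and that $(f)$ is the accessible case via higher Hida theory for the Siegel threefold together with classical Hida theory for the two $\GL_2$-factors. This matches the paper's strategy: realise the Gross--Prasad period as a coherent-cohomology cup product (with the $\Pi$-class in $H^2$ and the $\Sigma$-class in $H^0$) and interpolate it using \cite{pilloni20,LPSZ1}.

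There are, however, a few points where your proposal diverges from or misstates what is actually needed. First, you say the \emph{full Borel-ordinarity} of $\Pi_p$ is used in region $(f)$ to pin down the relevant $p$-stabilisation; in fact the paper explicitly records (see \cref{rem:parabolics}) that for region $(f)$ one only needs \emph{Klingen}-ordinarity of $\Pi_p$, since the Panchishkin submodule there is $\Fil^2 V_p(\Pi)\otimes V_p(\cG_1)\otimes V_p(\cG_2)$, which exists under the weaker hypothesis. Second, you flag as a ``secondary difficulty'' that the archimedean branching integral (Whittaker for $\GSp_4$ against holomorphic for $H(\RR)$) has not been computed in the needed form; the paper points to the computations of \cite{moriyama04} and \cite{lemma17} (via \cite[\S 8.5]{LPSZ1}) precisely for this, so this is not an open issue in region $(f)$. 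Third, your worry that the candidate element may only lie in a ring larger than $\operatorname{Frac}(\II)$ is somewhat overstated: the construction of \cite{LPSZ1} (used as a black box in the paper's \S\ref{sect:pLf}) actually produces an element of $\II \otimes_{\cO} \overline{L}$, i.e.\ an \emph{integral} object after a finite base change, not merely a rational one; the boundedness issues for the Klingen-ordinary projector are resolved inside \cite{LPSZ1} rather than left as an obstacle. With these corrections, your proposal is essentially a correct high-level description of the paper's partial construction, but it should not be presented as a route to proving the conjecture as stated, since that would require constructions in all six sign-$+1$ regions, and the paper itself concedes that regions $(a),(a')$ are inaccessible by any current method.
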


  \begin{remark} \
   \begin{enumerate}
    \item Of course, if the global Gross--Prasad conjecture \ref{conj:GGP} holds, the right-hand side of the conjecture can be more simply expressed as a multiple of the square root of $\Lambda(\Pi \times \Sigma(\phi), \tfrac{1}{2})$; but the $p$-adic conjecture as formulated above is logically independent of Conjecture \ref{conj:GGP}.
    \item It is important to note here that $\cL_p(\Pi \times \ucG, \spadesuit)$ will depend crucially on the choice of $\spadesuit$. So, having chosen $\Pi \times \ucG$, the conjecture predicts the existence of six different $p$-adic $L$-functions (one for each of the six regions $\{a,a',c,d,d',f\}$ of sign $+1$). These objects all have different interpolating properties (and will involve different periods and Euler factors).

    \item The dependence on $\gamma_\ell$ is an irritant. It seems natural to expect that for each prime $\ell \ne p$, there should be a rank 1 projective $\II[1/p]$-module interpolating (in some appropriate sense) the spaces $\Hom_{H_\ell}(\Pi_\ell \times \Sigma_\ell(\phi), \QQbar_p)$ for varying $\phi \in \fX^{\cl}$. In this case, the dependence of the $p$-adic $L$-function on the choice of $\gamma_\ell$ could be made explicit: it would be given by evaluating a basis of this Hom-space at the $\gamma_\ell$-translates of the new vectors.

    If the local factors of $\cG_1$ and $\cG_2$ are both non-principal-series at $\ell$, then $\Sigma_\ell(\phi)$ is actually independent of $\phi$, and the existence of such an interpolating module is obvious. However, the case where one of the $\cG_i$ is principal-series at $\ell$ is more difficult, and developing a theory of ``$\II$-adic branching laws'' applicable to this case is beyond the scope of the present paper.

    \item For $\ell = p$, if $\phi \in \fX^{\cris}$, we simply choose $\varphi_p$ and $\sigma_p(\phi)$ to be spherical vectors. Other local vectors at $p$ will play a role in the construction, but these do not appear in the final formula -- the relation between these alternate vectors at $p$ and the spherical vectors will be measured by the Euler factor $\cE_p$.

    The question of which vectors to choose at $\infty$, and at $p$ when $\phi$ is not crystalline, is rather more delicate; we do not know what the ``correct'' choices should be for all choices of $\spadesuit$. The case $\spadesuit = (f)$ is explored in more detail in \S\ref{sect:pLf} below.
    \qedhere
   \end{enumerate}
  \end{remark}

 \subsection{Transfer to inner forms}

  We briefly sketch how the conjecture above should be modified if the local signs at some finite places are $-1$. As before, we assume that $(\Pi, \ucG)$ be as in \cref{sit:pair}, with $\Pi$ ordinary at $p$.

  If $(\Pi, \ucG)$ is indefinite (in the sense of \cref{not:defindef}), the full form of \cref{conj:GGP} (see \cref{rem:innerforms}) singles out inner forms $G^{\sharp}, H^{\sharp}$ of $G$ and $H$, with $H^{\sharp} \subset G^{\sharp}$, which are both split at $\infty$; and we can expect to obtain $p$-adic $L$-functions for each of the six $+1$ regions $\{a,a',c,d,d',f\}$, as in the split case, interpolating the Gross--Prasad periods for the transfers of $\Pi$ and $\Sigma_i(\phi)$ to $G^\sharp \times H^\sharp$ for $\phi$ in those regions.

  In the definite case, it is instead the regions $\spadesuit \in \{b, b', e\}$ in which the specialisations of $\Pi \times \ucG$ have global root number +1; and we can expect the existence of three $p$-adic $L$-functions attached to $\Pi \times \ucG$, one for each of these regions. In this case, to obtain non-trivial Gross--Prasad periods we will have to use inner forms $G^{\sharp}, H^{\sharp}$ which are non-split at $\infty$, with Archimedean part $\GSpin(5, 0) \times \GSpin(4, 0)$ in region $(e)$, and $\GSpin(4, 1) \times \GSpin(4, 0)$ in regions $\{ b, b'\}$.

  \subsection{The Euler factor}

   The expected form of the Euler factor $\mathcal{E}_p^{\spadesuit}(\Pi_p \times \Sigma_p(\phi))$ can be given in terms of Galois representations. We let $\VV(\phi)$ denote the unique (up to isomorphism) semisimple $\Gal(\QQbar/\QQ)$-representation such that
   \[ \det(1 - \ell^{-s} \Frob_\ell^{-1}\ |\ \VV(\phi)) = L(\Pi_\ell \times \Sigma_{1, \ell} \times \Sigma_{2, \ell}, s + \tfrac12)\]
   for all but finitely many $\ell$, where $\Frob_\ell$ is an arithmetic Frobenius. Note that $\VV(\phi)\cong \VV(\phi)^*(1)$. As we shall recall in more detail in \cref{sect:conjs2} below, the ordinarity of $\Pi$ and the $\Sigma_i$ implies that there is a unique subrepresentation $\VV^{+}(\phi)$ stable under $G_{\Qp}$ such that $\VV^{+}(\phi)$ has all Hodge--Tate weights $\ge 1$, and the quotient $\VV^{-}(\phi) = \VV(\phi)/\VV^{+}(\phi)$ has all weights $\le 0$. We then set
   \begin{align*}
    \cE_p(\Pi \times \Sigma(\phi)) \coloneqq& \det\left(1 - p^{-1} \varphi^{-1}: \Dcris(\Qp, \VV^{+}(\phi)) \right) \\
    =& \det\left(1 - \varphi: \Dcris(\Qp, \VV^{-}(\phi)) \right).
   \end{align*}
   Note that if $\phi$ is fully ramified, then $\Dcris(\Qp, \VV(\phi)) = \{0\}$, and hence $\cE_p^{\spadesuit}(\Pi_p \times \Sigma_p(\phi)) = 1$. On the other hand, if $\phi$ is crystalline, then $\Dcris(\Qp, \VV(\phi))$ is 16-dimensional, with $\varphi$-eigenvalues $\{ \frac{\alpha \fa_1 \fa_2}{p^{w+1}}, \dots, \frac{\delta \fb_1 \fb_2}{p^{w+1}}\}$, where $(\alpha, \beta,\gamma, \delta)$ are the Hecke parameters of $\Pi_p$,  $(\fa_i, \fb_i)$ are the Hecke parameters of $\Sigma_{i, p}$, and $w = \tfrac{k_1 + k_2 + c_1 + c_2 - 6}{2}$. Moreover, the eigenvalues which appear in the subrepresentation $\VV^{+}$ are precisely those whose valuation is $\le -1$. So we have
   \[ \cE_p^{\spadesuit}(\Pi_p \times \Sigma_p(\phi)) = \prod\left\{ \left(1 - \tfrac{p^w}{\xi}\right): \xi \in \{\alpha \fa_1 \fa_2, \dots, \delta \fb_1 \fb_2\}, v_p(\xi) \le w \right\}.\]
   Since all the possible $\xi$ have complex absolute value $p^{w + \tfrac{1}{2}}$, we have in particular $\cE_p^{\spadesuit}(\Pi_p \times \Sigma_p(\phi)) \ne 0$ for all crystalline $\phi$.

   We can determine exactly which $\xi$'s appear from the regions in \cref{fig:GGP}: we can order the parameters such that $v_p(\fa_i) = 0, v_p(\fb_i) = c_i - 1$, and similarly $(\alpha, \beta, \gamma, \delta)$ have valuations $(0, k_2 - 2, k_1 - 1, k_1 + k_2 - 3)$. Hence the contributing $\xi$'s are those given in the following table. (We have omitted regions $(a'),(b'),(d')$ since these can be filled in by symmetry; and since the $\xi$'s come in pairs with product $p^{w+1}$, and exactly one from each pair contributes, we can omit the remaining columns. See below for the significance of the last column.)

   \begin{table}[ht]
   \caption{Frobenius eigenvalues contributing to $\cE_p^{\spadesuit}$ for crystalline $\phi$}
   \setlength{\tabcolsep}{3 pt}
   \begin{tabular}{c|cccc cccc cc}
   \hline
   $\spadesuit$ &
   $\alpha\fa_1\fa_2$ &
   $\alpha\fa_1\fb_2$ &
   $\alpha\fb_1\fa_2$ &
   $\alpha\fb_1\fb_2$ &
   $\beta\fa_1\fa_2$ &
   $\beta\fa_1\fb_2$ &
   $\beta\fb_1\fa_2$ &
   $\beta\fb_1\fb_2$ & \dots & $P^{\spadesuit}$ \\
   \hline
   (a) & $\checkmark$ & $\times$ & $\checkmark$ & $\times$ & $\checkmark$ & $\times$ & $\checkmark$ & $\times$ && $(\vno, \vno, \Bor)$\\
   (b)& $\checkmark$ & $\checkmark$ & $\checkmark$ & $\times$ & $\checkmark$ & $\times$ & $\checkmark$ & $\times$ && $(\Sieg,\Bor, \Bor)$\\
   (c)& $\checkmark$ & $\checkmark$ & $\checkmark$ & $\times$ & $\checkmark$ & $\checkmark$ & $\checkmark$ & $\times$ && $(\Kl,  \Bor, \Bor)$\\
   (d)& $\checkmark$ & $\checkmark$ & $\checkmark$ & $\checkmark$ & $\checkmark$ & $\times$ & $\checkmark$ & $\times$ && $(\Sieg, \vno, \Bor)$\\
   (e)& $\checkmark$ & $\checkmark$ & $\checkmark$ & $\checkmark$ & $\checkmark$ & $\checkmark$ & $\checkmark$ & $\times$ && $(\Bor, \Bor, \Bor)$\\
   (f) & $\checkmark$ & $\checkmark$ & $\checkmark$ & $\checkmark$ & $\checkmark$ & $\checkmark$ & $\checkmark$ & $\checkmark$ &&  $(\Kl, \vno, \vno)$\\
   \hline
   \end{tabular}
   \end{table}

   \begin{remark}
    \label{rem:parabolics}
    We have placed ourselves in a setting where the $\Sigma_i$ are both nearly ordinary at $p$ (i.e.~ordinary up to twisting), and $\Pi_p$ is ordinary for the Borel subgroup. However, this can be weakened; for instance, for the construction of a $p$-adic $L$-function in region $(f)$ in in \S \ref{sect:pLf} below, we do not need to assume that $\Pi_p$ be Borel ordinary, only that it be Klingen-ordinary.

    The weakest ordinarity condition that is needed to make sense of the conjectures is that the subrepresentation $\VV^+(\phi)$ should exist for all $\phi \in \fX^{\spadesuit}$. This corresponds to be requiring near-ordinarity of $\Pi_p \times \Sigma_p$ for the parabolic subgroup $P^{\spadesuit}$ of $\GSp_4 \times \GL_2 \times \GL_2$ indicated in the right-most column of the above table. Here $B$ denotes the Borel of either $\GSp_4$ or $\GL_2$, and $\varnothing$ denotes the whole group (for which all automorphic representations are vacuously nearly-ordinary).

    This suggests that the $p$-adic $L$-functions $\cL_p(\Pi \times \Sigma, \spadesuit)$ should extend to larger parameter spaces (``big eigenvarieties''), classifying automorphic representations having nearly-ordinary refinements for $P^{\spadesuit}$. See \cite{loeffler20} for further speculation along these lines.
   \end{remark}

 \subsection{Accessible cases}

  For certain choices of $\spadesuit$, the above conjecture is accessible.

  \subsubsection*{Split cases}
   Let us first assume that the local signs at the finite places are $+1$.

   \begin{itemize}
   \item We shall describe in \S \ref{sect:pLf} below how to construct $\mathcal{L}_{p}(\Pi \times \ucG, \spadesuit)$ for $\spadesuit = (f)$, using the higher Hida theory methods of \cite{LPSZ1}.

   \item Regions $\spadesuit = (d)$ and $(d')$ may be accessible using a variant of the methods of \cite{LPSZ1}; we hope to investigate this case in forthcoming work.

   \item The case $\spadesuit = (c)$ will be treated in forthcoming work of Bertolini--Seveso--Venerucci.
  \end{itemize}

  \subsubsection*{Non-split indefinite cases}

  It may be possible to extend the above results to the setting when the local sign is $+1$ at a (nonzero) even number of finite primes. To carry out such a construction for $\spadesuit = (f)$ (and presumably also $(d)$, $(d')$) would require generalising the higher Hida theory used in \cite{LPSZ1} to certain Hodge-type, but not PEL-type, Shimura varieties for inner forms of $\GSp_4$ which are split locally at $\infty$ (and at $p$) but not globally split.

  \subsubsection*{Definite cases} In a somewhat different direction, if there is an odd number of finite places where the local sign is $-1$, and we take $\spadesuit = (e)$, then the inner forms $G^{\sharp}$ and $H^{\sharp}$ are compact at $\infty$; so the relevant automorphic forms are functions on finite sets. In this case, it should be possible to construct a $p$-adic $L$-function by interpolating branching laws for algebraic representations, generalising the treatment of the totally definite $\GL_2 \times \GL_2 \times \GL_2$ case developed in \cite{greenbergseveso20}.
  \bigskip

  The remaining regions of global sign $+1$, i.e.~regions $\{a, a'\}$ in the indefinite case and $\{b,b'\}$ in the definite case, are much more mysterious. In these cases, we do not know how to interpret the periods $\cP(\varphi \otimes \sigma)$ in terms of any natural cohomology theory, which seems to be a pre-requisite for any kind of $p$-adic interpolation.

\section{Constructions I: the \texorpdfstring{$p$-adic $L$-function in region $(f)$}{p-adic L-function in region (f)}}

 \label{sect:pLf}

 \subsection{Setting}

  Let $(\Pi, \ucG)$ be as in \cref{sit:pair}, and suppose the following additional conditions hold:
  \begin{itemize}
   \item $\Pi$ is Klingen-ordinary at $p$.
   \item $\Pi$ is globally generic.
   \item $(\Pi, \ucG)$ is split in the sense of \cref{not:defindef}.
   \item $k_2 \ge 4$ (a technical condition inherited from \cite{LPSZ1}).
  \end{itemize}

  In this setting, for $\phi \in \fX^{(f)}$, we shall express the Gross--Prasad periods $\cP(\varphi \times \sigma_{\phi})$ for suitable test vectors $(\varphi, \sigma_{\phi})$ as cup-products in coherent cohomology, with $\varphi$ corresponding to a class in $H^2$ and $\sigma_{\phi}$ to a class in $H^0$. We shall then interpolate these cup products $p$-adically as $\phi$ varies, using the techniques of \emph{higher Hida theory} developed in \cite{pilloni20, LPSZ1} to study variation of coherent cohomology in families.

 \subsection{Hida families as $\II$-adic modular forms}

  We would like to interpret $\cG_i$ as $\II$-adic modular forms, in the sense of Katz (as elements of the structure sheaf of the Igusa tower over $X_1(N) \times \II$). However, at this point an annoying technical complication arises. With our conventions for modular curves (inherited from \cite{LSZ17} and ultimately from \cite{kato04}), a modular form of level $N$ which is defined over a ring $R$ does not necessarily have $q$-expansion coefficients in $R$, only in $R[\mu_N]$. Also, the comparison between classical and $p$-adic modular forms is only $\GL_2(\Af^p)$-equivariant up to a twist when the character at $p$ is nontrivial; see \cite[\S 4.1--4.4]{LPSZ1}. Correcting appropriately for this, we obtain the following result:

  \begin{proposition}
   Let $\cG$ be a Hida family over $\II$ of tame level $N$. Then there exists an $\II[\mu_N]$-adic cusp form, which we denote also by $\cG$, whose level away from $p$ is $K_1(N)$ and whose $q$-expansion is the power series $\cG \in \II[[q]]$. The specialisation of this form at any $\phi \in \fX^{\cl}(\cG)$ with $\chi_{\phi} = 1$ is the unique normalised eigenvector $g_{\phi}^{\ord} \in \Sigma(\cG, \phi)$ which is stable under $K_1(N) \cap \{ \stbt{\star}{\star}{}{\star} \bmod p\}$ and lies in the ordinary eigenspace for $U_p$.

   If $\phi \in \fX^{\cl}$ has weight-character $k_{\phi} + \chi_{\phi}$ with $\chi_{\phi}$ nontrivial, then the specialisation of $\cG$ at $\phi$ is the adelic modular form $g_\phi^{\new} \otimes \widehat{\chi}_{\phi}^{-1}$, which transforms under $\stbt{x}{\star}{0}{\star} \in \GL_2(\Zp)$ via $(\widehat{\chi}_{\phi})_p(x^{-1}) = \chi_\phi(x)$.
  \end{proposition}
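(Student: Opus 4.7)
The plan is to reduce the statement to Hida's classical theory of $\Lambda$-adic ordinary eigenforms and then track carefully the dictionary between Katz-style $p$-adic modular forms and adelic modular forms in the conventions of \cite{LSZ17,kato04}.

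First, I would invoke Hida's theorem on $\Lambda$-adic families, which produces, from the formal $q$-expansion $\cG \in \II[[q]]$, a section of the structure sheaf on the ordinary locus of the Igusa tower over $X_1(N) \otimes \II$ whose $q$-expansion reproduces $\cG$. Because every element of $\Zp^\times$ acts through the diamond operators via the tautological character $\bc: \Zp^\times \to \Lambda^\times$, the specialisation at an arithmetic point $\phi$ with weight-character $k_\phi + \chi_\phi$ is an element of the space of $p$-adic modular forms of tame level $K_1(N)$ that is an eigenform for all Hecke operators with the same eigenvalues as $g_\phi$. For $\chi_\phi = 1$ and $k_\phi \ge 2$, the classicality theorem for ordinary $p$-adic forms then identifies this specialisation with the unique normalised $U_p$-ordinary vector in $\Sigma(\cG,\phi)$ fixed by $K_1(N) \cap \{\stbt{\star}{\star}{}{\star}\bmod p\}$, i.e.\ with $g_\phi^{\ord}$.

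Next, I would extend scalars to $\II[\mu_N]$ in order to pass between Katz's geometric $q$-expansion normalisation (which uses a trivialisation of the $\mu_N$-torsor of level structures) and the adelic convention used here, following \cite[\S 4.1--4.4]{LPSZ1}. This is exactly the reason for the appearance of $\II[\mu_N]$ in the statement; under this comparison, the $p$-adic form over $\II[\mu_N]$ corresponds to an adelic form of the type described, with the specified $q$-expansion.

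The genuinely subtle step is identifying the specialisation when $\chi_\phi$ is nontrivial, where $g_\phi$ has conductor $Np^c$ with $c \ge 1$. On the $p$-adic side, the diamond action of $\stbt{x}{\star}{0}{\star}\in\GL_2(\Zp)$ on the $\II$-adic form is $\bc(x^{-1})$, so after specialisation at $\phi$ the form transforms under this subgroup by $\chi_\phi(x)\,x^{-k_\phi}$. On the adelic side, however, $g_\phi^{\new}$ transforms by $\widehat{\chi}_\phi$ restricted to $\Zp^\times$, which by our conventions equals $\chi_\phi^{-1}$. The discrepancy is corrected precisely by tensoring $g_\phi^{\new}$ with $\widehat{\chi}_\phi^{-1}$, whose restriction to $\Zp^\times$ is $\chi_\phi$, yielding the stated formula. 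The comparison between classical and $p$-adic modular forms being $\GL_2(\Af^p)$-equivariant only up to such a twist, already recorded in \cite[\S 4.1--4.4]{LPSZ1}, matches this up exactly.

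The main obstacle is purely bookkeeping: verifying the sign of the twist at $p$ and confirming it is $\widehat{\chi}_\phi^{-1}$ and not $\widehat{\chi}_\phi$, together with keeping track of the fact that the Galois character associated to $\widehat{\chi}_\phi$ is $\chi_\phi$ composed with the inverse of the cyclotomic character (as specified in the \emph{Conventions} section). Once this dictionary is nailed down, the proposition follows formally from Hida's theorem combined with the comparison in \cite[\S 4.1--4.4]{LPSZ1}.
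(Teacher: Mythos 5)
Your proposal follows essentially the same route the paper has in mind: the paper itself gives no formal proof, merely prefaces the proposition by citing Hida's theory (viewing $\cG$ as a section of the structure sheaf on the Igusa tower), the need to pass to $\II[\mu_N]$ because of the $\mu_N$-torsor in the level structure conventions of \cite{LSZ17,kato04}, and the twist arising from the fact that the classical/$p$-adic comparison of \cite[\S 4.1--4.4]{LPSZ1} is only $\GL_2(\Af^p)$-equivariant up to a twist at $p$. Those are exactly the three ingredients you assemble. The only thing I would flag is your bookkeeping in the third paragraph: you write that after specialisation the $p$-adic form transforms under $\stbt{x}{\star}{0}{\star}$ by $\chi_\phi(x)\,x^{-k_\phi}$, but the statement (and what one should get) is simply $\chi_\phi(x)$; the power of $x$ is absorbed into the weight of the coherent sheaf in Katz's formalism rather than appearing in the diamond action one compares against the adelic side, and one must also be careful that $\otimes\,\widehat{\chi}_\phi^{-1}$ acts through the determinant, not the $(1,1)$-entry alone, when checking the resulting transformation law. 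These are exactly the sort of slips that the careful dictionary you promise to nail down would catch, so I would not call this a genuine gap, but as written the displayed character computation does not land on the one in the proposition.
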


  Confusingly, $g_\phi^{\new} \otimes \widehat{\chi}_{\phi}^{-1}$ has the same $q$-expansion at $\infty$ as $g_\phi^{\new}$, but the two are not identical as adelic modular forms; $g_\phi^{\new}$ is an element of $\Sif'(g_\phi)$ invariant under $\stbt{\star}{\star}{}{1} \subset \GL_2(\Zp)$, while $g_\phi^{\new} \otimes \widehat{\chi}_{\phi}^{-1}$ is an element of $\Sif'(g_{\phi} \otimes \chi_{\phi}^{-1})$ invariant under $\stbt1\star{}\star$. (However, this twist will not trouble us unduly, since we shall only consider cases where the twists on $\cG_1$ and $\cG_2$ are opposite.)

 \subsection{Local data at $p$}

  We briefly recall some of the local zeta-integral computations carried out in \cite[\S 8]{LPSZ1} and \cite[\S 20]{LZ20}.

  \subsubsection{Local vectors for $\Pi$}

   Let $\{\alpha, \beta, \gamma, \delta\}$ be the Hecke parameters of $\Pi_p$, normalised such that all four are algebraic integers of complex absolute value $p^{(k_1 + k_2 - 3)/2}$, and ordered such that $\alpha\delta = \beta\gamma = p^{(k_1 + k_2 - 3)} \chi_{\Pi}(p)$ and $0 \le v_p(\alpha) \le \dots \le v_p(\delta)$.

   The Klingen-ordinarity of $\Pi_p$ implies that $\frac{\alpha\beta}{p^{k_2 - 2}}$ is a $p$-adic unit, and it is the unique unit eigenvalue of the integrally-normalised Hecke operator $U_{2, \Kl} = p^{k_1 - 3} \left[ \Kl(p) \diag(p^2, p, p, 1) \Kl(p)\right]$ on the $\Kl(p)$-invariants of $\Pi_p$ for any $n \ge 1$, where
   \[ \Kl(p) = \{ g \in G(\Zp): g \bmod p \in P_{\Kl} \}\]
   is the level $p$ Klingen parahoric.

   Let $\cW(\Pi_p)$ denote the local Whittaker model, and $w^{\sph} \in \cW(\Pi_p)$ the unique spherical vector with $w^{\sph}(1) = 1$. We let $w_{\alpha\beta}^{\Kl}$ denote the unique vector in $\cW(\Pi_p)^{\Kl(p)}$ which lies in the $U_{2, \Kl} = \frac{\alpha\beta}{p^{k_2 - 2}}$ eigenspace and satisfies $w_{\alpha\beta}^{\Kl}(1) = 1$.

   \begin{proposition}
     Let $u_{\Kl}$ be any element of $G(\Zp)$ whose first column is \emph{not} of the form $(0, \star, \star, 0)$ or $(\star, 0, 0, \star)$ modulo $p$. Then, for any smooth $H_p$-representation $\Sigma_{p}$, any $H_p$-equivariant homomorphism $\mathfrak{z}_p: \cW(\Pi_p) \otimes \Sigma_p \to \CC$, and any $\sigma \in \Sigma_p$, the sequence
     \[
      \left(\tfrac{p^{k_1 + k_2 - 1}}{\alpha\beta}\right)^{n}
      \mathfrak{z}_p\left(u_{\Kl} \cdot
      J\cdot
      \diag(p^{2n}, p^n ,p^n, 1)\cdot
      w_{\alpha\beta}^{\Kl}, \sigma \right)
     \]
    is independent of $n$ for $n \gg 0$.
   \end{proposition}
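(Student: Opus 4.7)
The plan is to exploit the $U_{2,\Kl}$-eigenequation satisfied by $w := w_{\alpha\beta}^{\Kl}$ in order to derive a recursion for the sequence $\xi_n := \mathfrak{z}_p(u_{\Kl}\cdot J\cdot t^n\cdot w,\,\sigma)$ (with $t := \diag(p^2,p,p,1)$), and then to show that this recursion forces the desired stabilisation. Choose representatives $\{\gamma_j\}_{j\geq 0}$ for the left cosets of $\Kl(p)$ in $\Kl(p)\, t\, \Kl(p)$, with $\gamma_0 = t$. The eigenequation $U_{2,\Kl}\cdot w = (\alpha\beta/p^{k_2-2})\cdot w$ becomes
\[
 t \cdot w \;=\; \frac{\alpha\beta}{p^{k_1+k_2-5}}\, w \;-\; \sum_{j\geq 1} \gamma_j \cdot w,
\]
and applying $u_{\Kl}\,J\,t^n$ on the left, then pairing with $\sigma$, yields
\[
 \xi_{n+1} \;=\; \frac{\alpha\beta}{p^{k_1+k_2-5}}\, \xi_n \;-\; \sum_{j\geq 1} \mathfrak{z}_p\bigl(u_{\Kl}\, J\, t^{n+1}\, g_j\cdot w,\; \sigma\bigr),
\]
where $\gamma_j = t\, g_j$ and the $g_j$ run over the $p^4-1$ non-trivial cosets of $\Kl(p) \cap t^{-1}\Kl(p) t$ in $\Kl(p)$.

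The crux is to analyse the error terms. My approach would be to produce, for each $g_j$ and each $n$ sufficiently large, a factorisation
\[
 u_{\Kl}\, J\, t^{n+1}\, g_j \;=\; h_{j,n}\,\cdot\, u_{\Kl}\, J\, t^{n+1}\,\cdot\, k_{j,n},
\]
with $h_{j,n}\in H_p$ and $k_{j,n}$ stabilising $w$ (for instance $k_{j,n}\in \Kl(p)$). Using $H_p$-equivariance of $\mathfrak{z}_p$, the $j$-th error term then equals $\mathfrak{z}_p(u_{\Kl}\,J\,t^{n+1}\,w,\, h_{j,n}^{-1}\cdot\sigma)$. One expects the $h_{j,n}$ to stabilise to elements $h_j \in H_p$ depending only on $g_j$, turning the recursion (after rescaling by $(p^{k_1+k_2-1}/\alpha\beta)^n$) into an eigenvalue equation on the finite-dimensional $H_p$-span of $\sigma$. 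The content of the stabilisation claim is then that the operator $\sigma \mapsto \sigma + \sum_{j\geq 1} h_j^{-1}\cdot\sigma$ acts by the scalar $p^4$ on the image of $\sigma$ in the relevant $H_p$-isotypic quotient of $\Sigma_p$; in other words, the $p^4$ elements $\{h_j^{-1}\}_{j\geq 0}$ form a set of representatives for an appropriate finite subquotient of $H_p$ in the relevant quotient.

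The condition on the first column of $u_{\Kl}$ is what makes the above factorisation possible. A Bruhat-type analysis of $u_{\Kl}\, J$ relative to an $H_p\times\Kl(p)$-decomposition of $G_p$ shows that the two excluded degenerate forms mod $p$ correspond exactly to those cells where the decomposition breaks down---either because $k_{j,n}$ escapes $\Kl(p)$, or because the $H_p$-component $h_{j,n}$ degenerates. Outside these loci, the factorisation exists and stabilises as $n\to\infty$, since conjugation by $t^{n+1}$ pushes the ``correction'' introduced by $g_j$ into an asymptotic regime in which the required normal form becomes independent of $n$. The main obstacle is the explicit case-by-case verification of this factorisation over the $p^4-1$ cosets $g_j$, together with the identification of the resulting $h_j$'s as a complete set of representatives producing the combinatorial factor $p^4$. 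I expect this computation to closely parallel the matrix-level local zeta-integral calculations in \cite[\S 8]{LPSZ1} and \cite[\S 20]{LZ20}, where precisely the same restriction on $u_{\Kl}$ plays an identical role.
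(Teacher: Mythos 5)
Your overall strategy -- derive a recursion from the $U_{2,\Kl}$-eigenequation, push the coset representatives across $u_{\Kl}\cdot J$ using the open-orbit decomposition of $G$ under $H \times P_{\Kl}$, and then appeal to $H_p$-equivariance of $\mathfrak{z}_p$ together with smoothness of $\sigma$ -- is exactly the approach the paper uses (via the general framework of \cite{loeffler-spherical}). The index $[\Kl(p) : \Kl(p)\cap t^{-1}\Kl(p)t] = p^4$ is also correct. However, the description of the stabilisation mechanism at the end of your sketch is off in a way worth flagging.

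The issue is in your penultimate paragraph. You expect the elements $h_{j,n} \in H_p$ to ``stabilise to elements $h_j$ depending only on $g_j$'', and then interpret the $p^4$ as arising because the $\{h_j^{-1}\}$ ``form a set of representatives for an appropriate finite subquotient of $H_p$''. That is not the right picture. Since the coset representatives $g_j$ are fixed once and for all, and $t^{n+1} g_j t^{-(n+1)} \to 1$ as $n \to \infty$ (conjugation by $t$ contracts $N_{\Kl}$), the elements $h_{j,n}$ produced by the open-orbit factorisation converge to $1$ as $n \to \infty$; there is no nontrivial limit. The precise content of ``$n \gg 0$'' is then that $h_{j,n}$ eventually lies in a principal congruence subgroup of $H(\Zp)$ small enough to fix the smooth vector $\sigma$. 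Once that holds, the factor $p^4$ is simply the \emph{number} of cosets in the Hecke operator (so all terms contribute equally), not the eigenvalue of some nontrivial averaging operator on an $H_p$-isotypic quotient of $\Sigma_p$. Put differently, what needs to be proved is not that the $h_j^{-1}$ ``sum to $p^4$'' -- it is that they all act trivially on $\sigma$, and they are $p^4$ in number.

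Relatedly, the closing suggestion that one needs a ``case-by-case verification ... over the $p^4-1$ cosets $g_j$'' is a red herring and would be a significant detour. The whole point of the open-orbit formalism is that the factorisation
\[
 u_{\Kl}\, J\, t^{n+1}\, g_j \;=\; h_{j,n}\, u_{\Kl}\, J\, t^{n+1}\, k_{j,n}, \qquad h_{j,n}\in H(p^{n-O(1)}\Zp),\ k_{j,n}\in\Kl(p),
\]
holds \emph{uniformly} for all $g_j$ once the linearised open-orbit condition (surjectivity of $\mathfrak{h}\oplus\operatorname{Ad}(u_{\Kl})\mathfrak{p}_{\Kl}\to\mathfrak{g}$) is verified at a single point, which is precisely what the hypothesis on the first column of $u_{\Kl}$ mod $p$ guarantees. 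No matrix-by-matrix calculation is required; the restriction on $u_{\Kl}$ enters once, at the level of checking the open orbit. (You should also be slightly careful, when verifying $k_{j,n}\in\Kl(p)$, about the role of $J$: the $P_{\Kl}$-component of the open-orbit factorisation of $u_{\Kl}\cdot\delta_j$, for $\delta_j = J\,t^{n+1}g_jt^{-(n+1)}J^{-1}\in\overline{N}_{\Kl}(p^{n+1}\Zp)$, is conjugated by $J$ into $\overline{P}_{\Kl}$, and further conjugation by $t^{-(n+1)}$ then only \emph{deepens} the unipotent part; this is what prevents the unipotent escape you would otherwise worry about.)
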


   \begin{proof}
    This follows from the fact that $u_{\Kl}$ represents the open $H$-orbit on the Klingen flag variety $G / P_{\Kl}$; it is a simple instance of the theory developed in \cite{loeffler-spherical}. For sufficiently large $n$, the vector $\sigma \in \Sigma_p$ will be preserved by the principal congruence subgroup mod $p^n$ in $H(\Zp)$. If this $n$ is also $\ge 1$, then the open-orbit condition implies that we can find vectors in this subgroup whose conjugates by $u_{\Kl}\cdot J$ give coset representatives for the $U_{2, \Kl}$-operator acting on $w_{\alpha\beta}^{\Kl}$.
   \end{proof}

   \begin{notation}
    We write $\mathfrak{z}_p\left(u_{\Kl} \cdot w_{\alpha\beta}^{\Kl, \infty}, \sigma \right)$ for the limit of the above sequence.
   \end{notation}

   \begin{remark}
    Here we are interpreting $w_{\alpha\beta}^{\Kl, \infty} = \left(\left(\tfrac{p^{k_1 + k_2 - 1}}{\alpha\beta}\right)^{n} J\cdot \diag(p^{2n}, p^n ,p^n, 1)\cdot w_{\alpha\beta}^{\Kl} \right)_{n \ge 1}$ as a collection of eigenvectors for the transpose operators $U_{2,\Kl}'$ at level
    $\begin{smatrix}
     \star&\star&\star&\star\\
     p^n&\star&\star&\star\\
     p^n&\star&\star&\star\\
     p^{2n}&p^n&p^n&\star\end{smatrix} \subset G(\Zp)$,
    compatible under the normalised trace maps; cf.~\cite[Proposition 5.7]{LPSZ1}. One can check that the projection of this element to level $\Kl(p^n)$ coincides with the vector denoted $w_n^{\Kl, \prime}$ in \cite[\S 20.2.3]{LZ20}.
   \end{remark}

  \subsubsection{Local vectors for $\Sigma$: unramified case}

   Let $\phi \in \fX^{\cl}(\ucG)$ and write $\Sigma = \Sigma(\ucG, \phi)$. As above, we write this as $\Sigma_1 \boxtimes \Sigma_2$ where $\Sigma_1 = \Sigma(\cG_1, \phi_1)$ and $\Sigma_2 = \Sigma(\cG_2, \phi_2) \otimes \tau_\phi^{-1}$. Both $\Sigma_{1, p}$ and $\Sigma_{2, p}$ are generic representations of $\GL_2(\Qp)$.

   If $\phi \in \fX^{\cris}$, then both $\Sigma_{1, p}$ and $\Sigma_{2, p}$ are unramified (and $\tau_{\phi}$ is trivial); and we let $w_1^{\sph}$ and $w_2^{\sph}$ be the corresponding spherical Whittaker functions, normalised by $w_i^{\sph}(1) = 1$. We let $\fa_i,\fb_i$ be the Hecke parameters of $\Sigma_{i, p}$, with $|\fa_i| = |\fb_i| = p^{(c_i - 1)/2}$. We also consider the ``$p$-depleted'' Whittaker vectors, which are the unique vectors $w_i^{\mathrm{dep}} \in \cW(\Sigma_{i, p})$ such that $w_i^{\mathrm{dep}}\left(\stbt{x}{}{}{1}\right)= \ch_{\Zp^\times}(x)$.

   \begin{proposition} \
    \label{prop:region-f-zeta}
    \begin{enumerate}
     \item There is a unique basis vector $\mathfrak{z}_p$ of $\Hom_{H(\Qp)}(\cW(\Pi_p) \times \cW(\Sigma_p), \CC)$ such that
     \[ \mathfrak{z}_p(w^{\sph}, w_1^{\sph}, w_2^{\sph}) = 1. \]
     \item Let $u_{\Kl}$ be any element\footnote{Such an element was denoted simply by $\gamma$ in \emph{op.cit.}, but this conflicts with the simultaneous use of $\gamma$ for a Hecke parameter.} of $G(\Zp)$ with first column $(1, 1, 0, 0)$. For $\mathfrak{z}_p$ as in (1), we have
     \[
      \mathfrak{z}_p\left(u_{\Kl} \cdot w_{\alpha\beta}^{\Kl, \infty}, w_1^{\dep}, w_2^{\dep}\right) = \tfrac{q^3}{(q+1)^2(q-1)} \cE^{(f)}(\Pi_p \times \Sigma_p(\phi)).
     \]
    \end{enumerate}
   \end{proposition}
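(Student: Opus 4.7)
The plan splits naturally into the two parts. For part (1), I would combine local multiplicity one with an explicit nonvanishing calculation. In the tempered generic setting relevant here, multiplicity one for the pair $(H_p, G_p \times \tilde{H}_p)$ — the relevant case of \cref{conj:locGGP}, established by Waldspurger and (for the general central character) the forthcoming work of Emory--Takeda cited above — gives $\dim \Hom_{H_p}(\cW(\Pi_p) \otimes \cW(\Sigma_p), \CC) \leq 1$. To exhibit a nonzero element, I would use the natural Novodvorsky-type triple Whittaker integral on a suitable unipotent-reduced quotient of $H_p$, obtained by pairing $W(\iota(h_1, h_2))$ against $W_1(h_1) W_2(h_2)$; this is manifestly $H_p$-equivariant. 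Casselman--Shalika expresses its value on the spherical triple as a finite Weyl-orbit sum in the Satake parameters, evaluating to a nonzero rational function of $(\alpha, \beta, \gamma, \delta, \fa_i, \fb_i)$ (up to normalisation, the local Rankin--Selberg $L$-factor at $s = \tfrac12$). Dividing by this value yields the unique $\mathfrak{z}_p$ obeying the claimed normalisation.

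For part (2), the approach is an explicit zeta-integral calculation parallel to those of \cite[\S 8]{LPSZ1} and \cite[\S 20]{LZ20}. Unfolding the definition of $w_{\alpha\beta}^{\Kl,\infty}$, one reduces to proving that the sequence
\[
 \left(\tfrac{p^{k_1+k_2-1}}{\alpha\beta}\right)^n \mathfrak{z}_p\!\left(u_{\Kl} \cdot J \cdot \diag(p^{2n}, p^n, p^n, 1) \cdot w_{\alpha\beta}^{\Kl}, w_1^{\dep}, w_2^{\dep}\right)
\]
stabilises at the stated value for $n$ large enough. Using that $u_{\Kl}$ represents the open $H$-orbit on $G/P_{\Kl}$, I would rewrite this as an integral over this open orbit; the diagonal element rescales the $H$-coordinates, and conjugation converts the $U_{2,\Kl}$-eigenproperty into a translation invariance that pins down the underlying Whittaker data explicitly. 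The depletion condition $w_i^{\dep}\bigl(\stbt{x}{}{}{1}\bigr) = \ch_{\Zp^\times}(x)$ then forces the two $\GL_2$-coordinates $x_i$ into $\Zp^\times$, simultaneously truncating what would otherwise be a divergent sum into a finite one. Carrying out this finite sum and identifying the resulting products with the eight Frobenius eigenvalues listed in row $(f)$ of the table yields $\cE^{(f)}(\Pi_p \times \Sigma_p(\phi))$, with the prefactor $\tfrac{q^3}{(q+1)^2(q-1)}$ emerging as a parahoric-volume ratio relative to the Haar measure used to normalise $\mathfrak{z}_p$.

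The hard part is the bookkeeping of normalisations. Matching the prefactor $\tfrac{q^3}{(q+1)^2(q-1)}$ requires reconciling the Haar-measure convention used to fix $\mathfrak{z}_p(w^\sph, w_1^\sph, w_2^\sph) = 1$ in part (1) with the parahoric index $[G(\Zp) : \Kl(p)]$ implicit in the shift from spherical to Klingen-stabilised vectors. A related subtlety is the stability of the limiting sequence: one must verify that the shifted integrals stabilise at the same $n$ as the sequence of Whittaker functions, and that the formal Hecke-eigenvalue extraction commutes with the limit. A useful sanity check — and a probable source of the combinatorics — is the specialisation to the setting treated in \cite[\S 20]{LZ20}, where one of the $\Sigma_i$ is taken to be Eisenstein and the calculation collapses onto a previously verified one.
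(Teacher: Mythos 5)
Note first that the paper itself gives no proof of this proposition: the stated ``proof'' is a one-line deferral to the forthcoming companion paper \cite{LZ20-zeta2}, citing \cite[\S 8]{LPSZ1} and \cite[\S 20.2.3]{LZ20} as the special cases it extends. Your sketch is consistent with that referenced strategy, and the structural ingredients you identify --- multiplicity one for part (1), the open-orbit/Klingen-level zeta-integral limit and the truncation effected by $p$-depletion for part (2) --- are all plausibly the right ones. So there is no conflict of approach; the issue is only that both the paper and your write-up stop short of the actual computation.

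Two places in your plan would nonetheless need genuine repair before they could become a proof. For part (1), the pairing $\int_{H_p} W(\iota(h))\,W_1(h_1)\,W_2(h_2)\,\mathrm{d}h$ as written does not converge; in the analogous $\GSp_4\times\GL_2$ case one integrates over a quotient by a unipotent subgroup with an auxiliary complex parameter $s$ and continues analytically, and for the triple product there is no off-the-shelf Rankin--Selberg integral, so one would likely have to fall back on Ichino--Ikeda-type matrix-coefficient integrals or reduce to the $\GSp_4\times\GL_2$ case by a degeneration argument. Also, Casselman--Shalika alone does not give the unramified value of such a period: you still need a nontrivial Weyl-orbit resummation in the style of Kato--Murase--Sugano or Bump--Friedberg--Ginzburg before the local $L$-factor appears, and the answer is not literally $L(\Pi_p\times\Sigma_p,\tfrac12)$ but that factor times an explicit volume and zeta-factor correction --- which is precisely why part (1) is phrased as a \emph{normalisation} statement rather than an $L$-value identity. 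For part (2), describing $\tfrac{q^{3}}{(q+1)^{2}(q-1)}$ as ``a parahoric-volume ratio'' is an optimistic guess: in \cite[\S 8]{LPSZ1} the analogous constants arise from a mixture of the Klingen-orbit volume, the integral normalisation of $U_{2,\Kl}$, and a ratio of local $L$-factors, and nothing suggests this case will be simpler. Finally, you should explain why the proposition pins $u_{\Kl}$ to have first column exactly $(1,1,0,0)$ rather than merely satisfying the open-orbit condition of the earlier proposition: the stabilised limit exists for any open-orbit representative, but the specific value on the right-hand side is sensitive to this choice, and your outline does not engage with that dependence.
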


   \begin{proof}
    This is worked out in detail in \cite{loeffler-zeta2}, expanding upon the special cases treated in \cite[\S 8]{LPSZ1}.
   \end{proof}

   Note that
   \begin{multline*}
    \cE^{(f)}(\Pi_p \times \Sigma_p(\phi)) = \left(1 - \tfrac{p^w}{\alpha \fa_1 \fa_2}\right)
       \left(1 - \tfrac{p^w}{\alpha \fb_1 \fa_2}\right)
       \left(1 - \tfrac{p^w}{\alpha \fa_1 \fb_2}\right)
       \left(1 - \tfrac{p^w}{\alpha \fb_1 \fb_2}\right) \\
       \times
       \left(1 - \tfrac{p^w}{\beta \fa_1 \fa_2}\right)
       \left(1 - \tfrac{p^w}{\beta \fb_1 \fa_2}\right)
       \left(1 - \tfrac{p^w}{\beta \fa_1 \fb_2}\right)
       \left(1 - \tfrac{p^w}{\beta \fb_1 \fb_2}\right),
   \end{multline*}
   where $w = \tfrac{k_1 + k_2 + c_1 + c_2 - 6}{2}$.

  \subsubsection{Local vectors for $\Sigma$: ramified case}

   If $\phi$ is non-crystalline, then the definitions of the vectors $w_i^{\mathrm{dep}} \in \cW(\Sigma_{i, p})$ still make sense (but note that if $\tau_{\phi}$ is non-trivial, $\Sigma_{2, p}$ is a ramified twist of $\Sigma_p(\cG_2, \phi_2)$, so $w_2^{\mathrm{dep}}$ is \textit{not} the image of the $p$-depleted Whittaker vector of $\Sigma_p(\cG_2, \phi_2)$).

   \begin{proposition}
    If $\phi$ is fully ramified, then $u_{\Kl} \cdot w^{\Kl, \infty}_{\alpha\beta} \otimes w_1^{\dep} \otimes w_2^{\dep}$ is a test vector for $\Pi_p \otimes \Sigma_p$; that is, if $\mathfrak{z}_p \in \Hom(\cW(\Pi) \times \cW(\Sigma), \CC)$ is non-zero, then we have
    \[
     \mathfrak{z}_p\left(u_{\Kl} \cdot w^{\Kl, \infty}_{\alpha\beta} \otimes w_1^{\dep} \otimes w_2^{\dep}\right) \ne 0.
    \]
   \end{proposition}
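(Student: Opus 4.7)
\emph{Proof proposal.} The plan is to establish non-vanishing by a direct local zeta-integral calculation, parallel to the unramified computation behind \cref{prop:region-f-zeta}(2). By uniqueness of $H_p$-equivariant functionals on $\cW(\Pi_p) \otimes \cW(\Sigma_p)$ (multiplicity one), $\mathfrak{z}_p$ is the unique up to scalar non-zero element of the Hom-space, so I may work with any convenient explicit realisation: take $\mathfrak{z}_p$ to be a Novodvorsky-type local zeta integral over the quotient of $H(\Qp)$ by its centre and unipotent radical, evaluating products $w(\iota(h)\, J)\, w_1(h_1)\, w_2(h_2)$. Absolute convergence is inherited from the temperedness of $\Pi_p$ and the two $\Sigma_{i, p}$.

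First I would use the depletion: $w_i^{\dep}$ is supported on matrices of the form $\stbt{x}{\ast}{0}{1}$ with $x \in \Zp^\times$, and equals $\ch_{\Zp^\times}(x)$ there, so the $h$-integration collapses to the open cell of $\tilde H(\Zp)$ on which $w_1^{\dep} \otimes w_2^{\dep} \equiv 1$. Second, I would invoke the stabilisation argument from the proposition preceding \cref{prop:region-f-zeta}: because $u_{\Kl}$ represents the open $H$-orbit on $G/P_{\Kl}$, the sequence defining $u_{\Kl} \cdot w_{\alpha\beta}^{\Kl,\infty}$ becomes constant on the image of this open cell for $n$ at least as large as the conductors of $\tau_\phi$ and $\chi_1 \tau_\phi^{-1}$. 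Combining these, the zeta integral reduces to a finite sum, indexed by cosets of an Iwahori-level congruence subgroup inside $H(\Zp)$, in which the coefficient of each coset is a value of the spherical Whittaker function $w^{\sph}$ at $u_{\Kl}$ translated by a unit diagonal element, weighted by a character value of $\tau_\phi$ and $\chi_1 \tau_\phi^{-1}$.

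Third, I would prove non-vanishing of this finite sum. Because $u_{\Kl}$ lies in the open $H$-orbit, $w^{\sph}$ is non-zero on the relevant support, and the sum disentangles into a product of two one-dimensional character sums on $(\Zp / p^n\ZZ)^\times$: the first a Gauss sum for $\tau_\phi$ against a non-trivial additive character induced by the Whittaker model, and the second a Gauss sum for $\chi_1 \tau_\phi^{-1}$. Both characters are non-trivial by the full-ramification hypothesis, so both Gauss sums are non-zero; multiplicativity then yields a non-zero total.

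\textbf{Main obstacle.} The delicate point is the stabilisation step: the $U_{2, \Kl}'$ trace maps used to define $w_{\alpha\beta}^{\Kl, \infty}$ must be shown to preserve (and in fact become isomorphisms on) the subspace of vectors supported on the $u_{\Kl}$-translate of the open $H$-orbit, so that the limiting sum factorises cleanly as a product of Gauss sums rather than being forced to zero by cancellation between cosets in a deeper level. This reduces to a Bruhat-cell analysis of $P_{\Kl}(\Zp) \backslash G(\Zp)$ intersected with $H(\Zp)$, of the same flavour as the computations underlying \cite[\S 20]{LZ20} and the forthcoming \cite{LZ20-zeta2}, but with the ramified characters kept in play; I would defer the explicit verification to the latter.
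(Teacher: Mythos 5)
The paper's own proof of this proposition is a one-sentence citation: ``This follows from the twisted zeta integral computations of \cite[\S 20.4]{LZ20}.'' Your sketch is a plausible reconstruction of what that cited computation must contain --- realise $\mathfrak{z}_p$ as a local zeta integral, invoke the open-orbit stabilisation from the preceding proposition, and deduce non-vanishing from Gauss sums of the non-trivial characters $\tau_\phi$ and $\chi_1/\tau_\phi$ --- and the Gauss-sum mechanism is indeed the right reason why full ramification forces non-vanishing. So the approach is essentially the paper's, differing only in that you defer to the in-preparation \cite{LZ20-zeta2} rather than to the already-posted \cite[\S 20.4]{LZ20}.

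Two cautions. First, you invoke multiplicity one for $\Hom_{H_p}(\cW(\Pi_p)\otimes\cW(\Sigma_p),\CC)$ to replace an arbitrary non-zero $\mathfrak{z}_p$ by the explicit zeta integral. This is part of the local Gross--Prasad conjecture (\cref{conj:locGGP}), which the paper notes is known only when $\chi_{\Pi,v}$ is a square (Waldspurger), with the general case attributed to forthcoming work of Emory--Takeda; without it you cannot reduce to a concrete realisation, so the dependence should be made explicit. Second, and more substantively, your claim that $w_i^{\dep}$ is ``supported on matrices of the form $\stbt{x}{\ast}{0}{1}$ with $x\in\Zp^\times$'' is not correct. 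A Whittaker function is a non-trivial function on all of $\GL_2(\Qp)$; the depletion condition $w_i^{\dep}\bigl(\stbt{x}{}{}{1}\bigr)=\ch_{\Zp^\times}(x)$ pins down only the restriction to the diagonal torus. The reduction of the zeta integral to a torus integral must come from the Iwasawa decomposition and the Whittaker transformation law, not from a vanishing locus on the group, and in the ramified case the compact part of the integration is exactly where the Gauss sums arise --- a genuine support claim would erase the very contribution you need. You correctly flag the remaining delicacy (compatibility of the $U'_{2,\Kl}$ normalised trace maps with the Bruhat/coset decomposition so that the stabilised sum genuinely factors rather than cancels), and your deferral of that verification is honest and in the spirit of the paper's own proof.
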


   \begin{proof}
    This follows from the twisted zeta integral computations of \cite[\S 20.4]{LZ20}.
   \end{proof}

   \begin{remark}
    In this ramified case there does not seem to be an obvious ``standard'' normalisation of $\mathfrak{z}_p$.
   \end{remark}
 \subsection{Choosing the global data}

  The global Whittaker transform, given by integrating automorphic forms over the compact quotient $N(\QQ) \backslash N(\AA)$ where $N$ is the upper-triangular unipotent subgroup of $\GSp_4$, gives a canonical isomorphism
  \[ \Pi \cong \cW(\Pi) \cong \sideset{}{'}{\bigotimes}_v \cW(\Pi_v).\]
  For all finite places $v$, the space $\cW(\Pi_v)$ has a normalised new-vector $w_v^{\mathrm{new}}$. So, given a vector $w_p \in \cW(\Pi_p)$, we can consider the global Whittaker function
  \[ w_\infty \cdot w_p \cdot \prod_{v \notin \{ p, \infty\}} \gamma_v  w_v^{\mathrm{new}}, \]
  where $\gamma_v$ is an arbitrary element of $\GSp_4(\QQ_v)$ which is the identity if $v$ is unramified, as in \S \ref{sect:testvec}, and $w_\infty$ is the standard Whittaker function at $\infty$, as in \cite[\S 10.2]{LPSZ1}. We denote the images of $w^{\sph}$ and $u_{\Kl} \cdot w^{\Kl, \prime}_n$ by  $\varphi^{\sph}$ and $\varphi^{\Kl}_n$ respectively, suppressing the role of $\gamma$ from the notation.

  The theory for the $\Sigma_i$ is analogous, with the standard Whittaker function being the complex exponential $\stbt y x 0 1 \mapsto y^{c_i/2} \exp(- 2\pi y + 2\pi \sqrt{-1} x )$, so the resulting vectors are holomorphic modular forms in the lowest $K_\infty$-type subspace of $\Sigma_i$. In general these cannot pair non-trivially with $w_\infty$, since the $K_\infty$-types do not match; so we shall use the Maass--Shimura differential operator $\delta$ to raise the weights. That is, we shall consider the periods
  \[ \cP\left(\varphi^{\Kl}_n \times \sigma_1^{[p]} \times \delta^t(\sigma_2^{[p]})\right) \]
  where $t = \frac{k_1 - k_2 -c_1 - c_2 + 2}{2}$ for $\phi$ of weight $(c_1, c_2)$, and $\sigma_i^{[p]}$ has local component $w_i^{\dep}$ at $p$. These are independent of $n$ for $n \gg 0$, and we denote the limiting value simply by $\cP\left(\varphi^{\Kl}_\infty \times \sigma_1^{[p]} \times \delta^t(\sigma_2^{[p]})\right)$. Note that the assumption that $(c_1, c_2)$ lies in region $(f)$ is precisely the inequality needed to ensure that $t \ge 0$. It follows from the computations of \cite{moriyama04} and \cite{lemma17} that the above vector is a local test vector at $\infty$; see \cite[\S 8.5]{LPSZ1}.

  \begin{lemma}
   Suppose $\phi$ is either crystalline, or fully ramified, and that the local conjecture \ref{conj:locGGP} holds at all places.

   If $\cP(\dots)$ is not identically zero as a linear functional on $\Pi \times \Sigma$, then it is non-zero on the vector $\varphi^{\Kl}_n \times \sigma_1^{[p]} \times \delta^t(\sigma_2^{[p]})$, for some choice of the auxiliary $\gamma_v$ at places $v \ne p,\infty$. Moreover, if $\phi$ is crystalline then
   \[
    \cP\left(\varphi^{\Kl}_\infty \times \sigma_1^{[p]} \times \delta^t(\sigma_2^{[p]})\right) = \tfrac{q^3}{(q+1)^2(q-1)}\cE_p^{(f)}(\Pi_p \times \Sigma_p)\cP\left(\varphi^{\sph} \times \sigma_1^{\sph} \times \delta^t(\sigma_2^{\sph})\right),
   \]
   and $\cE_p^{(f)}(\Pi_p \times \Sigma_p) \ne 0$.
  \end{lemma}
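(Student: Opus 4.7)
The plan is to exploit local uniqueness to factorise the global period $\cP$ as a product of local Hom-space functionals, and then verify non-vanishing place by place. Under Conjecture \ref{conj:locGGP} at all finite places distinct from $p$, together with Proposition \ref{prop:region-f-zeta}(1) at $p$ and the standard archimedean branching statement, each local Hom space $\Hom_{H_v}(\Pi_v \otimes \Sigma_v, \CC)$ is at most one-dimensional. Fixing a basis $\mathfrak{z}_v$ at every place, there exists a constant $C \in \CC$ depending only on $(\Pi, \ucG, \phi)$ such that $\cP(\otimes_v \xi_v) = C \cdot \prod_v \mathfrak{z}_v(\xi_v)$ on pure tensors. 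The hypothesis $\cP \not\equiv 0$ is then equivalent to $C \ne 0$, so the first assertion of the lemma reduces to arranging non-vanishing of each local factor on the prescribed test vector.

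For the auxiliary primes $v \notin \{p, \infty\}$, the argument of \S\ref{sect:testvec} applies directly: since the $H_v$-translates of the new-vector line span $\Pi_v \otimes \Sigma_v$, one can choose $\gamma_v \in H_v \backslash (G_v \times \tilde H_v)$, trivial for all but finitely many $v$, so that $\mathfrak{z}_v$ is non-zero on the $\gamma_v$-translate of the new vector. At $\infty$, non-vanishing of $\mathfrak{z}_\infty$ on $w_\infty \otimes \delta^t \sigma_1 \otimes \delta^t \sigma_2$ is the archimedean test-vector computation of \cite{moriyama04, lemma17} recalled in \cite[\S 8.5]{LPSZ1}; this is precisely where the region-$(f)$ hypothesis $t \ge 0$ enters. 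At $p$, Proposition \ref{prop:region-f-zeta}(2) handles the crystalline case and the subsequent proposition handles the fully ramified case.

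For the explicit formula in the crystalline case, I would simply take the ratio of the two factorisations of $\cP(\varphi^{\Kl}_\infty \times \sigma_1^{[p]} \times \delta^t \sigma_2^{[p]})$ and $\cP(\varphi^{\sph} \times \sigma_1^{\sph} \times \delta^t \sigma_2^{\sph})$: the global constant $C$, the archimedean factor $\mathfrak{z}_\infty$, and the factors $\mathfrak{z}_v$ for $v \ne p$ are identical on both sides, since only the $p$-components of the test vectors differ. The ratio therefore collapses to
\[ \mathfrak{z}_p(u_{\Kl} \cdot w_{\alpha\beta}^{\Kl,\infty}, w_1^{\dep}, w_2^{\dep}) \big/ \mathfrak{z}_p(w^{\sph}, w_1^{\sph}, w_2^{\sph}), \]
which by parts (1) and (2) of Proposition \ref{prop:region-f-zeta} is exactly $\tfrac{q^3}{(q+1)^2(q-1)} \cE_p^{(f)}(\Pi_p \times \Sigma_p)$. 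Non-vanishing of $\cE_p^{(f)}$ is immediate from the product formula derived earlier in the section: each contributing Frobenius eigenvalue $\xi$ has complex absolute value $p^{w+1/2}$, so no factor $(1 - p^w/\xi)$ can vanish.

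The genuine input, and the main potential obstacle, is Proposition \ref{prop:region-f-zeta}(2), whose proof is deferred to \cite{LZ20-zeta2}; once this and its ramified analogue are granted, the remainder is essentially bookkeeping around local uniqueness and the archimedean branching computation.
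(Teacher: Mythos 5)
Your argument is a correct and detailed expansion of the paper's own one-line proof, which simply declares the lemma clear from the zeta-integral computations earlier in the section: the factorization of $\cP$ through local multiplicity one, the place-by-place non-vanishing via the choice of $\gamma_v$, the archimedean test-vector input, and the ratio argument yielding the crystalline formula are precisely what the authors are gesturing at. One small transcription slip: the archimedean test vector is $w_\infty \otimes \sigma_{1,\infty} \otimes \delta^t(\sigma_{2,\infty})$, with the Maass--Shimura operator applied only to the second $\GL_2$ factor, not to both as you wrote.
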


  \begin{proof} Clear from the zeta-integral computations above. \end{proof}

 \subsection{The construction}

  We make one final choice: a $\QQbar$-basis $\nu$ of the new subspace of $H^2(\Pi)$, where $H^2(\Pi)$ is the copy of $\Pif$ appearing in the degree 2 coherent cohomology of the Siegel Shimura variety. As in \cite[\S 6.8]{LPSZ1}, comparing $\nu$ with the standard Whittaker function defines a period $\Omega_\infty^W(\Pi, \nu) \in \CC^\times$.

  \begin{theorem}
   There is an element
   \[ \cL_{p, \nu, \gamma}(\Pi \times \ucG; (f)) \in \II \otimes_{\cO} \overline{L}\]
   such that for all $\phi \in \fX_{(f)}(\ucG)$ we have
   \[ \cL_{p, \nu, \gamma}(\Pi \times \ucG; (f)) (\phi) = \frac{\cP\left(\varphi^{\Kl}_\infty \times \sigma_1^{[p]} \times \delta^t(\sigma_2^{[p]})\right)}{\Omega^W_{\infty}(\Pi, \nu)}.  \]

   In particular, if $\phi \in \fX^{(f)}(\ucG)$ is either crystalline, or fully ramified, and the local conjecture \ref{conj:locGGP} holds for $\Pi \times \Sigma(\phi)$ at all places, then we have $\cL_{p, \nu, \gamma}^{(f)}(\Pi \times \ucG) (\phi) \ne 0$ for some $\gamma$ if and only if the Gross--Prasad period $\cP$ is non-zero on $\Pi \otimes \Sigma(\phi)$.
  \end{theorem}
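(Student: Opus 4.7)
The plan is to follow the higher Hida theory approach of \cite{pilloni20, LPSZ1}, extended to accommodate a second $\GL_2$ Hida family in place of the Eisenstein class used there. The argument splits into three parts: a cohomological reinterpretation of the period $\cP(\varphi^{\Kl}_\infty \times \sigma_1^{[p]} \times \delta^t(\sigma_2^{[p]}))$, an $\II$-adic interpolation of the resulting pairing, and the local zeta integral at $p$ already worked out in \cref{prop:region-f-zeta}.

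First I would reinterpret the period cohomologically. Since $\Pi$ is globally generic and $k_2 \geq 4$, the representation $\Pi_\infty^{\mathrm W}$ contributes to degree-$2$ coherent cohomology of the Siegel threefold with coefficients in $\omega^{(k_1, k_2)}$, and $\varphi^{\Kl}_n$ corresponds to a class $\eta_\Pi^{\Kl, n}$ at Klingen-parahoric level. The sections $\sigma_1^{[p]}$ and $\delta^t(\sigma_2^{[p]})$ live in $H^0$ of automorphic line bundles on the two modular-curve factors of the $H$-Shimura variety; the exponent $t = (k_1 - k_2 - c_1 - c_2 + 2)/2$ raises the weight of $\sigma_2^{[p]}$ so that the $\iota$-pullback of $\sigma_1^{[p]} \boxtimes \delta^t(\sigma_2^{[p]})$ matches the restriction of $\omega^{(k_1, k_2)}$, and the condition $(c_1, c_2) \in$ region $(f)$ is precisely $t \geq 0$. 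Cupping $\eta_\Pi^{\Kl, n}$ with this pulled-back section and taking the trace to a point realises $\cP$ as a cohomological pairing, as in \cite[\S 8]{LPSZ1}.

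Second, I would interpolate each side $p$-adically. Klingen-ordinarity of $\Pi_p$ lets $\eta_\Pi^{\Kl, n}$ lift along the $\Kl(p^\infty)$-tower to a class $\eta_\Pi^{\Kl, \infty}$ in the Pilloni-ordinary part of degree-$2$ coherent cohomology; since $\Pi$ is fixed, only this one specialisation is needed, normalised by $\Omega^W_\infty(\Pi, \nu)$ to lie in a chosen $\QQbar$-structure. The two Hida families provide $\II$-adic sections in $H^0$ on each modular-curve factor. The operator $\delta^t$ applied to the $p$-depleted section $\sigma_2^{[p]}$ coincides with $t$ iterations of the $p$-adic theta operator (the difference between $\delta$ and $\theta$ is supported on the unit-root subspace, which is killed by $p$-depletion), and $\theta^{\mathbf t}$ with $\mathbf t$ an $\II$-linear function of the universal weight characters interpolates analytically on the $p$-depleted subspace. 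The $\II$-adic cup-product-then-trace of the normalised $\eta_\Pi^{\Kl, \infty}$ against the $\iota$-pullback of the pair $(\cG_1, \theta^{\mathbf t}\cG_2)$, with $\mathbf t$ dictated by $\ucG$ and $\bc$, then defines the element $\cL_{p, \nu, \gamma}(\Pi \times \ucG; (f)) \in \II \otimes_\cO \bar L$. The interpolation formula at each classical $\phi \in \fX_{(f)}$ follows by classicality of Pilloni-ordinary cohomology and of Hida-family specialisations, combined with the identification of the cup-product-trace pairing with the Whittaker integral defining $\cP$. The ``in particular'' assertion then comes from factorising $\cP$ via \cref{conj:locGGP}: the $p$-contribution equals $\tfrac{q^3}{(q+1)^2(q-1)}\cE^{(f)}_p$ times the spherical value in the crystalline case, and is non-zero in the fully ramified case (by \cref{prop:region-f-zeta}); the $\infty$-contribution is non-zero by \cite{moriyama04, lemma17}; and at each $\ell \ne p$ one may choose $\gamma_\ell$ so that the $\ell$-contribution is non-zero whenever the corresponding local Hom-space is non-trivial.

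The main obstacle is the two-variable $\II_1 \mathop{\hat\otimes} \II_2$-adic interpolation of $\theta^{\mathbf t}\cG_2$, where $\mathbf t$ itself depends on both weight characters, together with the twist by $\widehat{\tau}_\phi^{-1}$ appearing in \cref{not:tripleG}: this was the reason for introducing the auxiliary character $\bc$ in \S\ref{sect:hida}, and verifying that all specialisations land in the correct automorphic representations is the one genuinely new technical input relative to \cite{LPSZ1}. Once this bookkeeping is settled, the remaining cohomological machinery transfers directly from loc.\ cit.
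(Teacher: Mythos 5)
Your proposal follows essentially the same route as the paper's proof, which reduces the statement to Theorem~5.11 and Corollary~6.17 of \cite{LPSZ1}, taking the input family of $p$-adic modular forms on $H$ to be $\cE = \gamma_1 \cdot \cG_1^{[p]} \boxtimes \gamma_2 \cdot \theta^{r-\bc}\cG_2^{[p]}$ with $r = \lfloor \frac{k_1-k_2+2}{2}\rfloor$, and choosing the Klingen-level eigenclass $\eta$ so that $u_{\Kl}\cdot\eta = \frac{1}{\Omega_\infty^W(\Pi,\nu)}\varphi_{\Kl}$. Your identification of the theta-operator bookkeeping and the role of $\bc$ and $\widehat{\tau}_\phi^{-1}$ as the essential new ingredient relative to \cite{LPSZ1} matches the paper's strategy, which handles exactly this point via the fixed exponent $r-\bc$.
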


  Note that no periods from the $\Sigma_i$ appear; this is a consequence of the $q$-expansion principle for $\GL_2$, which shows that modular forms with Fourier coefficients in $\QQbar$ are algebraic as coherent cohomology classes.

  \begin{proof}
   After much notational unravelling this follows from Theorem 5.11 and Corollary 6.17 of \cite{LPSZ1}. We take the $R$ of \emph{op.cit.} to be our $\II$, and we take the family of $p$-adic modular forms $\mathcal{E}$ of \emph{op.cit.} to be the pullback to $H$ of the family on $\GL_2 \times \GL_2$ given by
   \[ \cE = \gamma_1 \cdot \cG_1^{[p]} \boxtimes \gamma_2 \cdot \theta^{r-\bc}\cG_2^{[p]}, \]
   where $()^{[p]}$ denotes the $p$-depletion operator, and $r = \lfloor \frac{k_1 - k_2+2}{2}\rfloor$. Thus we have $\cE \in \cS_{\tau_1, \tau_2}(\II)$ for two characters $\tau_1, \tau_2 : \Zp^\times \to \II^\times$ such that $\tau_1 + \tau_2$ is the integer $k_1 - k_2 + 2$.

   If we specialise at a classical point $\phi$, then the specialisation of $\cE$ is the image under the $p$-adic unit root splitting of the nearly-holomorphic form $\sigma_1^{[p]} \times \delta^t(\sigma_2^{[p]})$ (or more precisely of its restriction to $H$). We choose the Klingen-level eigenform $\eta$ of \emph{op.cit.} such that $u_{\Kl} \cdot \eta = \frac{1}{\Omega_\infty^W(\Pi, \nu)} \varphi_{\Kl}$; by construction this lies in the coherent cohomology over $\QQbar$. This gives the stated formula.
  \end{proof}

\section{Interlude: Automorphic Galois representations}
 We now recall some standard facts about Galois representations attached to automorphic representations, as preliminaries for the conjectures and constructions to follow.

 \subsection{Galois representations for $\GL(2)$}

  If $g \in S_k(\Gamma_1(N))$ is any newform with coefficients in $L$, with $k \ge 2$ (and any $N \ge 1$, not necessarily coprime to $p$), we define a 2-dimensional $L$-linear $\Gal(\QQbar/\QQ)$-representation $V_p(g)$ as in \cite{deligne69}, as a direct summand of the \'etale cohomology of $Y_1(N)$ with coefficients in the appropriate \'etale sheaf depending on $k$.

  \begin{remark}
   Up to isomorphism, $V_p(g)$ is characterised by $\operatorname{tr}(\Frob_\ell^{-1} : V_p(g)) = a_\ell(g)$ for $\ell \nmid Np$, where $\Frob_\ell$ is the arithmetic Frobenius. However, the conjectures below will depend upon the specific realisation of $V_p(g)$ in \'etale cohomology, not just its isomorphism class. Note also that the Hodge numbers\footnote{Negatives of Hodge--Tate weights; so the Hodge number of the cyclotomic character is $-1$.} of $V_p(g)$ are $0$ and $k-1$.
  \end{remark}

  We can extend the definition of $V_p(g)$ to any normalised eigenform (not necessarily new); in particular, it applies to eigenforms of level $Np$ that are $p$-stablisations of newforms of level $N$, with $p \nmid N$. In this case, if $g^{\new}$ is the newform corresponding to $g$, then $V_p(g)$ and $V_p(g^{\new})$ are abstractly isomorphic, but they do not coincide as submodules of the \'etale cohomology of $Y_1(Np)$; an explicit isomorphism between them is given by the map $(\Pr^{\alpha})^*$ described in \cite[\S 5.7]{KLZ17}.

  \begin{theorem}[Hida, Ohta]
   Let $\cG$ be a Hida family which is residually non-Eisenstein and $p$-distinguished\footnote{That is, if $\bar{\rho}_{\cG}$ denotes the common mod $p$ Galois representation of all specialisations of $\cG$, then $\bar{\rho}_{\cG}$ is irreducible, and the two characters appearing in the semisimplification of $\bar{\rho}_{\cG} |_{G_{\Qp}}$ are distinct. This is automatic if the weight-character of $\cG$ is not $1 \pmod{p-1}$.}. Then there exists a free rank two $\II$-module $V_p(\cG)$, with a continuous $\II$-linear action of $\Gal(\QQbar / \QQ)$, such that for any $\phi \in \fX^{\cl}(\cG)$ with $k_\phi \ge 2$, we have $V_p(\cG) \otimes_{\II, \phi} \bar{L} = V_p(g_{\phi})$.
  \end{theorem}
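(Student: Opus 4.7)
The plan is to construct $V_p(\cG)$ as a direct factor (cut out by Hecke operators) of a suitable limit of étale cohomology groups of modular curves, then use Hida's control theorem to identify specializations and the non-Eisenstein/$p$-distinguished hypotheses to establish freeness.

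First I would assemble the ``big ordinary cohomology module''. For each $r\ge 1$, consider
\[ M_r \coloneqq e^{\ord} H^1_{\et}\!\left(Y_1(Np^r)_{\QQbar},\, \cO\right), \]
where $e^{\ord} = \lim_n U_p^{n!}$ is Hida's ordinary idempotent acting through the Hecke operator $U_p$. These modules are naturally modules over the Galois group $\Gal(\QQbar/\QQ)$ and over the ordinary Hecke algebras $\mathbb{T}_r^{\ord}$. Taking the inverse limit along the trace maps (or equivalently the $U_p$-compatible tower) gives
\[ M_\infty \coloneqq \varprojlim_r M_r, \]
equipped with commuting actions of $\Lambda = \cO[[\Zp^\times]]$ (via the diamond operators at $p$) and of the big ordinary Hecke algebra $\mathbb{T}^{\ord} = \varprojlim_r \mathbb{T}_r^{\ord}$.

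Next I would use Hida's control theorem: for each arithmetic point $\phi$ of weight $k_\phi\ge 2$ (and appropriate character), specialization of $M_\infty$ along $\Lambda\to \cO$ (twisted by $\phi$) recovers $e^{\ord}H^1_{\et}(Y_1(Np^{c})_{\QQbar},\mathrm{Sym}^{k_\phi-2}\cO^2)$, at least up to controlled torsion. Combined with the Eichler--Shimura realisation, the Hecke-eigen-component of this specialisation attached to $g_\phi$ is exactly $V_p(g_\phi)$. Thus the ``interpolation'' property is built in from the start; the content of the theorem is to localize and prove freeness.

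Now localize. The Hida family $\cG$ corresponds to a minimal prime of $\mathbb{T}^{\ord}$, and in particular to a maximal ideal $\mathfrak{m}$ (the kernel of $\mathbb{T}^{\ord}\to \II/\mathfrak{m}_\II$); the integral closure of $\mathbb{T}^{\ord}/\mathfrak{p}$ in the appropriate component is $\II$. Set
\[ V_p(\cG) \coloneqq M_\infty{}_\mathfrak{m} \otimes_{\mathbb{T}^{\ord}_{\mathfrak{m}}} \II. \]
The residually non-Eisenstein hypothesis is crucial here: it guarantees that $M_\infty{}_\mathfrak{m}$ is free of rank $2$ over $\mathbb{T}^{\ord}_{\mathfrak{m}}$. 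This is the classical result of Mazur--Wiles--Tilouine--Ohta, proved by combining (i) the fact that the dimension of each specialization is $2$, by Eichler--Shimura, with (ii) a Nakayama/multiplicity-one argument. More concretely, one shows that the semisimplification of $M_\infty{}_\mathfrak{m}/\mathfrak{m}$ contains each of the two Jordan--H\"older constituents of $\bar\rho_{\cG}$ with multiplicity one; for this the hypothesis that $\bar\rho_{\cG}$ is absolutely irreducible (non-Eisenstein) is essential, since it rules out the phenomenon where Eisenstein-type congruences would cause extra multiplicity. After base change to $\II$, $V_p(\cG)$ is free of rank $2$.

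Finally, I would verify the interpolation. By construction, $V_p(\cG)\otimes_{\II,\phi}\bar L$ is computed by Hida's control theorem to be a specific Hecke-eigencomponent of classical étale cohomology, which matches the Deligne representation $V_p(g_\phi)$; the $p$-distinguished hypothesis is used at this stage to ensure that the ordinary filtration of $V_p(g_\phi)|_{G_{\Qp}}$ varies in a controlled way and that the local specialisation statement does not have non-trivial kernel (it ensures the relevant localization of Ohta's étale Eichler--Shimura map is an isomorphism, not merely a surjection). The main obstacle is precisely the freeness step: without non-Eisenstein and $p$-distinguished, $M_\infty{}_\mathfrak{m}$ need not be free of rank $2$, and one must settle for a module which is merely generically of rank $2$. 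Given these hypotheses, however, Hida's and Ohta's theorems apply and yield the desired $V_p(\cG)$.
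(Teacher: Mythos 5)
Your approach matches what the paper sketches immediately after the theorem statement: the paper realises $V_p(\cG)^*$ as a Hecke-isotypic quotient of $H^1_{\ord}(N,\Lambda) = e'_{\ord}\varprojlim_r H^1(Y_1(Np^r)_{\QQbar},\cO(1))$, localised at the maximal ideal corresponding to $\bar\rho_{\cG}$, and then uses freeness over the Hecke algebra; you do the dual version with $U_p$ and $e^{\ord}$, which is equivalent. The interpolation step via Hida's control theorem is the same.

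There is, however, a muddle in your account of what the two hypotheses are doing, and it is worth flagging. You attribute freeness of $M_{\infty,\mathfrak{m}}$ over $\mathbb{T}^{\ord}_{\mathfrak{m}}$ primarily to the non-Eisenstein hypothesis, and you justify this by saying one shows ``the semisimplification of $M_{\infty,\mathfrak{m}}/\mathfrak{m}$ contains each of the two Jordan--H\"older constituents of $\bar\rho_{\cG}$ with multiplicity one.'' But $\bar\rho_{\cG}$ is absolutely irreducible by assumption, so it has a \emph{single} Jordan--H\"older constituent; the ``two constituents'' you have in mind can only be those of $\bar\rho_{\cG}|_{G_{\Qp}}$, which is precisely where the $p$-distinguished hypothesis enters. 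The paper (and Ohta's theorem) emphasise $p$-distinction as the input to freeness: residual irreducibility lets one define the Galois representation over the Hecke algebra (Carayol's argument), while $p$-distinction is what makes the ordinary filtration on the local cohomology split cleanly and yields the rank-$2$ freeness via the $\Lambda$-adic Eichler--Shimura exact sequence. Both hypotheses are genuinely needed, but the logical roles you assign them are swapped. Since this is a sketch citing a known result this is not fatal, but a careful write-up would need to correct it.
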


  For our constructions it is more convenient to work with the dual $V_p(\cG)^*$. This has a canonical realisation as a quotient of \'etale cohomology. More specifically, if we define
  \[ H^1_{\ord}(N, \Lambda) = e'_{\ord} \varprojlim_r H^1\left(Y_1(Np^r)_{\QQbar}, \cO(1)\right), \]
  where $e'_{\ord}$ is the ordinary projector associated to the dual Hecke operator $U_p'$, then the action of the diamond operators gives $H^1_{\ord}(N)$ the structure of a finite-rank projective $\Lambda$-module, and we can define $V_p(\cG)^*$ as the maximal quotient of $H^1_{\ord}(N, \Lambda) \otimes_{\Lambda} \II$ on which the dual Hecke operators $T_\ell'$ act via $a_\ell(\cG)$. The $p$-distinction hypothesis implies that the localisation of $H^1_{\ord}(N, \Lambda)$ at the maximal ideal corresponding to $\bar\rho_{\cG}$ is free of rank 2 over the Hecke algebra, so $V_p(\cG)^*$ is free of rank 2 over $\II$. It satisfies
  \[ V_p(\cG)^* \cong \left(V_p(\cG) \otimes \widehat{\chi}_{\cG}^{-1}\right)(\bc - 1)\]
  where $\widehat{\chi}_{\cG}$ is a finite-order $\cO^\times$-valued character and $\bc -1$ denotes the composite of the cyclotomic character $\Gal(\QQbar/\QQ) \to \Zp^\times$ with the character $\Zp^\times \to \II^\times$, $x \mapsto x^{\bc - 1}$.

  Finally, we note that if $g$ is ordinary, then $V_p(g)|_{\Gal(\QQbar_p/\Qp)}$ has an unramified subrepresentation $\Fil^1 V_p(g)$, on which $\Frob_p^{-1}$ acts as $a_p(g)$. These subrepresentations interpolate over Hida families:

  \begin{theorem}[Ohta]
   The representation $V_p(\cG)$ has a free rank one $\II$-submodule $\Fil^1 V_p(\cG)$, stable under $\Gal(\QQbar_p / \Qp)$, interpolating the submodules $\Fil^1 V_p(g_\phi)$ for $\phi \in \fX^{\cl}(\cG)$.
  \end{theorem}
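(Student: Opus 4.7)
The plan is to construct the filtration first on the dual side, working with the ordinary étale cohomology module $H^1_{\ord}(N,\Lambda)\otimes_{\Lambda}\II$, and then transfer it to $V_p(\cG)$ via the duality $V_p(\cG)^*\cong (V_p(\cG)\otimes\widehat\chi_{\cG}^{-1})(\bc-1)$. The two basic ingredients are the classical local-at-$p$ filtration of ordinary étale cohomology of modular curves (Mazur--Wiles, Tilouine, Ohta), and a control theorem saying that the filtration is compatible under the trace maps used to form the tower.

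First, I would work at finite level $r$. The ordinary part $e'_{\ord}H^1_{\et}\bigl(Y_1(Np^r)_{\overline{\QQ}},\cO(1)\bigr)$ carries a $G_{\Qp}$-equivariant short exact sequence
\[
0\to H^{\mathrm{ur}}_{r}\to e'_{\ord}H^1_{\et}\bigl(Y_1(Np^r)_{\overline{\QQ}},\cO(1)\bigr)\to H^{\mathrm{ram}}_{r}\to 0,
\]
where $H^{\mathrm{ur}}_r$ is unramified and arises geometrically from the formal completion along the ordinary locus (equivalently, from the Igusa tower); $\Frob_p^{-1}$ acts on $H^{\mathrm{ur}}_r$ by the dual Hecke operator $U_p'$. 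This is the classical structure theorem for ordinary étale cohomology. The connecting trace maps between levels preserve the filtration, because the ordinary locus is stable under the projections $Y_1(Np^{r+1})\to Y_1(Np^r)$ and the trace on Igusa towers is compatible with inertia.

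Next, I would pass to the inverse limit over $r$ and extend scalars to $\II$. This gives an exact sequence of $\II$-modules with continuous $G_{\QQ}$-action
\[
0\to F^{\mathrm{ur}}\to V_p(\cG)^*\to V_p(\cG)^*/F^{\mathrm{ur}}\to 0
\]
with $F^{\mathrm{ur}}$ unramified at $p$. Localization at the maximal ideal corresponding to $\bar\rho_{\cG}$ and freeness of $V_p(\cG)^*$ over $\II$ (guaranteed by the non-Eisenstein and $p$-distinguished hypotheses) imply that both $F^{\mathrm{ur}}$ and the quotient are free of rank $1$: the $p$-distinction hypothesis ensures that the two Jordan--H\"older characters of $\bar\rho_{\cG}|_{G_{\Qp}}$ remain distinct everywhere on $\Spec\II$, so no rank jumping can occur. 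Dualizing and twisting using the formula relating $V_p(\cG)^*$ to $V_p(\cG)$, I obtain a rank-one $\II$-submodule $\Fil^1 V_p(\cG)\subset V_p(\cG)$, stable under $G_{\Qp}$ and unramified. The interpolation property at each $\phi\in\fX^{\cl}(\cG)$ with $k_\phi\ge 2$ is then immediate: tensoring the short exact sequence with $\II\to\bar L$ via $\phi$ is exact (by freeness), and the resulting unramified line in $V_p(g_\phi)$ must coincide with $\Fil^1 V_p(g_\phi)$, since the latter is characterized as the unique unramified $G_{\Qp}$-stable line (again by $p$-distinguishedness applied to the specialization).

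The main obstacle is the rank-one freeness of $F^{\mathrm{ur}}$ in the passage to the limit. At finite level the filtration is classical, but to get a \emph{free} $\II$-module of rank one one must show that the $p$-distinction hypothesis propagates to the entire Hida family and controls the local-at-$p$ behaviour uniformly; this is where Ohta's control theorem is essential, and it is also the point where the hypothesis on the weight-character not being $1\bmod p-1$ enters. A secondary technicality is verifying that the trace-compatible system $(H^{\mathrm{ur}}_r)_r$ really does assemble into a direct summand of $H^1_{\ord}(N,\Lambda)$ (rather than just a submodule with torsion cokernel), which again reduces to the separation of the two local characters in the residual representation.
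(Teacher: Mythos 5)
The paper does not actually prove this theorem; it is quoted as a result of Ohta without proof, so your proposal has to be judged on its own merits. The overall plan — build the local filtration on the dual side $V_p(\cG)^*$ from the geometry of ordinary \'etale cohomology of the tower $Y_1(Np^r)$, invoke Ohta's control theorem for $\Lambda$-adic compatibility, and use the residually non-Eisenstein and $p$-distinguished hypotheses to get rank-one freeness of both graded pieces — is indeed the right strategy, and correctly identifies the two points where the real work happens.

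However, there is a genuine error in the direction of the local filtration, and as written the argument proves the wrong thing. In $e'_{\ord}H^1_{\et}\bigl(Y_1(Np^r)_{\QQbar},\cO(1)\bigr)$, the unramified piece is the \emph{quotient}, not a submodule; the submodule is the piece on which inertia acts through a twist of the cyclotomic character. One can see this in weight $2$: the Tate twist $\cO(1)$ makes this module the Tate module of the ordinary part of the Jacobian, whose $G_{\Qp}$-stable submodule is the (ramified, toric) Tate module of the formal group and whose quotient is the (unramified) Tate module of the reduction. Equivalently, from the duality $V_p(\cG)^* \cong (V_p(\cG)\otimes\widehat\chi_{\cG}^{-1})(\bc-1)$: since $\Fil^1 V_p(g_\phi)\subset V_p(g_\phi)$ is unramified of Hodge--Tate weight $0$, its image under the twist by $(\bc-1)$ has Hodge--Tate weight $k_\phi-1$, so the corresponding submodule of $V_p(g_\phi)^*$ is ramified by $\chi_{\mathrm{cyc}}^{k_\phi-1}$, and it is the quotient that is unramified. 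This also means your ``dualizing and twisting'' step does not work as stated: an unramified submodule of $V_p(\cG)^*$, transported through the twist isomorphism, would give a submodule of $V_p(\cG)$ on which inertia acts by $\widehat\chi_{\cG}\cdot\chi_{\mathrm{cyc}}^{1-\bc}$, not an unramified one. The fix is to construct the unramified \emph{quotient} $V_p(\cG)^*\twoheadrightarrow F^{\mathrm{ur}}$ and then observe that $(F^{\mathrm{ur}})^*\hookrightarrow V_p(\cG)$ is the desired unramified rank-one submodule. Relatedly, you identify the unramified piece as ``arising from the formal completion along the ordinary locus (equivalently, from the Igusa tower)'': these are not equivalent. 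The formal completion (i.e.\ the formal group over the ordinary locus) gives the connected, ramified part, whereas the Igusa tower — the \'etale quotient of the $p$-divisible group over the ordinary locus — gives the unramified part; Ohta's description uses the latter for the unramified quotient.
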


 \subsection{Self-dual twists}

  Let us suppose we are given a triple $\ucG = (\cG_1, \cG_2, \bc)$ as in \cref{not:tripleG} with both of the $\cG_i$ being $p$-distinguished, and $\II = \II_1 \mathop{\hat\otimes_{\cO}} \II_2$.
  \newcommand{\htimes}{\mathop{\hat\otimes}}
  Then the representation
  \[ V_p(\ucG) \coloneqq \left(V_p(\cG_1) \htimes_{\cO} V_p(\cG_2)\right) (\mathbf{c}-1) \]
  is free of rank 4 over $\II$, and satisfies
  \[
   V_p(\ucG)^*\otimes\widehat{\chi}_{\cG_1}\widehat{\chi}_{\cG_2} \cong
   \begin{cases}
    V_p(\ucG) & \text{if $\chi_{\cG_1}(-1) \chi_{\cG_2}(-1) = +1$,}\\
    V_p(\ucG)(1) &\text{if $\chi_{\cG_1}(-1) \chi_{\cG_2}(-1) = -1$.}
   \end{cases}
  \]
  In particular, $V_p(\ucG)$ is self-dual up to a twist which is \emph{constant} over $\II$.

  It is helpful to note that $V_p(\ucG)^*$ has a canonical realisation in the cohomology of a Shimura variety attached to $\GL_2 \times \GL_2 \times \mathbf{G}_m$. We can regard the scheme $\mu_{p^r}^\circ$ of primitive $p^r$-th roots of unity, with its natural Galois action, as a Shimura variety for $\mathbf{G}_m$. We can then realise $V_p(\ucG)^*$ as a quotient of the module
  \[
   (e'_{\ord}, e'_{\ord})\varprojlim_r  H^2_{\et}\left(
   \left(Y_1(Np^r) \times Y_1(Np^r) \times \mu_{p^r}^\circ\right)_{\QQbar}
   , \Zp(1)\right) \htimes \II;
  \]
  it is the maximal quotient on which $\left(\stbt{\star}{\star}{}x,\stbt{\star}{\star}{}y, z\right)$ for $x,y,z \in \Zp^\times$ acts as multiplication by the character $x^{(\mathbf{c}_1 - 2)} y^{(\mathbf{c}_2 - 2)} z^{(2-\mathbf{c})}$, and the transpose Hecke operators $T'_\ell$ away from $p$ on the two $\GL_2$ factors act by the systems of eigenvalues associated to the $\cG_i$. In particular, elements of the form $\left(\stbt{x}{\star}{}{x},\stbt{x}{\star}{}{x}, x^2\right)$ act trivially if $\chi_1 \chi_2$ is even, and and as multiplication by $x$ if $\chi_1\chi_2$ is odd.

 \subsection{Galois representations for $\GSp_4$}
  \label{sect:gsp4gal}

  Let $\Pi$ be a cuspidal automorphic representation of $\GSp_4$ such that:
  \begin{enumerate}[(i)]
   \item $\Pi$ is globally generic, and is not CAP or a Yoshida lift.
   \item $\Pi_\infty$ is the generic discrete series representation $\Pi_\infty^{\mathrm{W}}$ of weight $(k_1, k_2)$, for some integers $k_1 \ge k_2 \ge 3$.
  \end{enumerate}

  Write $(r_1, r_2) = (k_1 - 3, k_2 - 3)$; thus $r_1 \ge r_2 \ge 0$, so $\lambda = (r_1, r_2; r_1 + r_2)$ is the highest weight of an irreducible algebraic representation of $G$. As in \cite{LPSZ1}, we define a Galois representation $V_p(\Pi)$ associated to $\Pi$; this Galois representation is 4-dimensional, and its $L$-function is $L(\Pi, s - \frac{r_1 + r_2 + 3}{2})$. It can be realised canonically as the space of new-vectors in the $\Pif$-isotypical part of the \'etale cohomology of the $\GSp_4$ Shimura variety over $\QQbar$, with coefficients in the sheaf $\cV_\lambda$ associated to $\lambda$.

  \subsubsection*{Duality} The Galois representation $V_p(\Pi)$ supports a non-zero, $G_{\QQ}$-equivariant symplectic pairing $V_p(\Pi) \otimes V_p(\Pi) \to L(\chi_{\Pi})(3-k_1-k_2)$, where $\chi_{\Pi}$ is regarded as a Galois character via the Artin map (normalised to send uniformisers to geometric Frobenii).

  \subsubsection*{Ordinarity} If $\Pi_p$ is ordinary, then $V_p(\Pi)$ has a complete flag of subspaces $\Fil^i V_p(\Pi)$ stable under $\Gal(\QQbar_p / \Qp)$, with $\Fil^i$ of codimension $i$, and $\Fil^{i}$ the annihilator of $\Fil^{4-i}$ with respect to the symplectic pairing. If $\Pi_p$ is Klingen-ordinary, then $\Fil^2$ exists (but $\Fil^1$ and $\Fil^3$ may not); on the other hand, if it is Siegel-ordinary, then $\Fil^1$ and $\Fil^3$ exists, but $\Fil^2$ may not.

\section{Conjectures II: Geometric cohomology classes for $-1$ regions}
 \label{sect:conjs2}

 \subsection{Setting}

  We consider the following situation:

  \begin{situation}
  \label{sit:pairGalrep}
   Let $(\Pi, \ucG)$ be as in Situation \ref{sit:pair}; and suppose additionally that:
   \begin{enumerate}[(i)]
    \item $\Pi$ satisfies the conditions of \S \ref{sect:gsp4gal};
    \item $\Pi_p$ is ordinary;
    \item the Hida families $\cG_i$ are residually non-Eisenstein and $p$-distinguished.
   \end{enumerate}
  \end{situation}

  In this section, we turn our attention to the ``sign $-1$'' cases: that is, we shall study specialisations of $(\Pi, \ucG)$ with weights lying in regions $\{b, b', e\}$ when $(\Pi, \ucG)$ is indefinite, or in regions $\{a,a',c,d,d',f\}$ when $(\Pi, \ucG)$ is definite. In this case, the global Gross--Prasad conjecture \ref{conj:GGP} does not apply -- the central $L$-value $\Lambda(\Pi \times \Sigma, \tfrac{1}{2})$ vanishes, and the period $\cP(-)$ is identically 0. The Beilinson--Bloch--Kato conjecture predicts that the vanishing of $\Lambda(\Pi \times \Sigma, \tfrac{1}{2})$ should be related to the existence of an appropriate global Selmer class. We shall formulate a (rather vague) conjecture asserting that these Selmer classes interpolate over the family $\ucG$.

 \subsection{Selmer groups}

  We are interested in the 16-dimensional tensor-product Galois representation
  \[ \VV(\phi) = \Big(V_p(\Pi) \otimes V_p(\Sigma_1(\phi)) \otimes V_p(\Sigma_2(\phi))\Big)\left( 1 + w\right)
  = \Big(V_p(\Pi)^* \otimes V_p(\Sigma_1(\phi))^* \otimes V_p(\Sigma_2(\phi))^*\Big)\left( - w\right),
  \]
  where $w = \frac{r_1 + r_2 + c_1 + c_2}{2}$ as above, for $\phi \in \fX^{\cl}(\ucG)$. We are interested in the following conjecture, which is part of the Beilinson--Bloch--Kato conjecture for this Galois representation:

  \begin{conjecture}
   Let $\spadesuit \in \{ b, b', e\}$ if $\Pi \times \ucG$ is indefinite, or $\spadesuit \notin \{ b, b', e\}$ if $\Pi \times \ucG$ is definite. Then for any $\phi \in \fX^{\spadesuit}$, we should have $H^1_{\f}(\QQ, \VV(\phi)) \ne 0$. Moreover, there should exist a non-zero class in this space which is the image of an algebraic cycle on a product of Kuga--Sato varieties.
  \end{conjecture}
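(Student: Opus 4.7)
The most tractable case is region $(e)$ in the indefinite setting, which is the direct analogue of the balanced diagonal cycle of Gross--Kudla--Schoen and Darmon--Rotger for $\GL_2 \times \GL_2 \times \GL_2$. For $\phi \in \fX^{(e)}$, my plan is: (i) construct an explicit algebraic cycle $\Delta(\Pi,\phi)$ on a suitable product of Kuga--Sato towers over the Shimura varieties for $G$ and $\tilde H$; (ii) apply the \'etale Abel--Jacobi map to produce a class in $H^1(\QQ, \VV(\phi))$ and verify that it lies in $H^1_{\f}$; and (iii) prove that this class is non-zero. Regions $(b)$, $(b')$ in the indefinite case, and the six regions arising in the definite case, appear to be genuinely out of reach with current tools, and I would not attempt them.

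For step (i), I take the diagonal embedding of Shimura data induced by $\iota \colon H \into G \times \tilde H$, giving a codimension-$3$ closed subvariety of a $5$-dimensional ambient Shimura variety. Lifting to Kuga--Sato towers and contracting with a fixed branching vector in the triple coefficient sheaf $\cV_\lambda \boxtimes \Sym^{c_1-2} \boxtimes \Sym^{c_2-2}$ produces the cycle; the weight inequalities defining region $(e)$ are precisely the branching conditions of \cite{harriskudla92} ensuring that such an $H$-invariant vector exists. For step (ii), the \'etale cycle class map lands in $H^6_{\et}$ of the ambient Shimura variety with appropriate twisted coefficients, and the Künneth projection onto the $\Pi \boxtimes \Sigma_1(\phi) \boxtimes \Sigma_2(\phi)$-isotypic summand of $H^3 \otimes H^1 \otimes H^1$ yields the required class in $H^1(\QQ, \VV(\phi))$ once cuspidality is used to kill the middle-degree obstruction. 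That this class lies in $H^1_{\f}$ follows by standard arguments on $p$-adic Abel--Jacobi maps of algebraic cycles: the crystalline condition at $p$ is automatic from ordinarity of $\Pi_p$ and of both $\cG_i$, and the unramified condition away from $p$ follows from smooth-proper base change.

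The genuine obstacle is step (iii). A direct archimedean height argument seems unavailable, so I would instead follow the Darmon--Rotger template of \cite{darmonrotger16, BSV1}: first, interpolate the classes $\AJ(\Delta(\Pi,\phi))$ into a single Iwasawa cohomology class $\Delta(\Pi,\ucG)$ over all of $\fX^{(e)}$ using the deformation-theoretic machinery developed there; second, prove an explicit reciprocity law identifying the image of $\Delta(\Pi,\ucG)$ under the appropriate Perrin-Riou big logarithm on the $\VV^{+}$-direction with one of the adjacent $+1$-region $p$-adic $L$-functions --- most naturally $\cL_p(\Pi \times \ucG, (f))$ from \S\ref{sect:pLf}, or the region $(c)$ construction of Bertolini--Seveso--Venerucci. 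Non-vanishing of such a $p$-adic $L$-function at a single well-chosen crystalline specialisation, which follows via the interpolation formula of \cref{conj:pGGP} from non-vanishing of a classical central $L$-value there, then forces the big class to be non-zero, and hence $\AJ(\Delta(\Pi,\phi)) \ne 0$ on a Zariski-dense open subset of $\fX^{(e)}$. Promoting this generic non-vanishing to non-vanishing at \emph{every} $\phi \in \fX^{(e)}$ --- as the conjecture demands --- is the most delicate point; it requires an integral Iwasawa-theoretic control of the module of big classes, and this is where I expect any complete proof to fall short with currently available methods.
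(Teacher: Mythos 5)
The statement you were handed is a \emph{conjecture}, explicitly labelled as part of the Beilinson--Bloch--Kato conjecture for $\VV(\phi)$; the paper neither proves it nor claims to, so there is no ``paper's own proof'' to compare against. What the paper does (in Section 8) is construct a \emph{candidate} Abel--Jacobi class in $H^1_{\f}(\QQ,\VV(\phi))$ for $\phi$ in region $(e)$ in the split indefinite case, interpolate these into a big class $\Delta_\gamma(\Pi\times\ucG,(e))$ over $\II$, and then (in Section 10) partially verify an explicit reciprocity law relating this family to $\cL_p(\Pi\times\ucG,(f))$ --- but only at crystalline specialisations, with the full reciprocity law deferred to a companion paper and the non-vanishing of the individual classes left entirely open.

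Restricted to region $(e)$, your proposal is a faithful roadmap of that programme. Steps (i) and (ii) match Section 8 closely: the paper constructs $\Delta^{[r_1,r_2,t_1,t_2]}(\cK)$ in motivic cohomology via Ancona's functor and Torzewski's compatibility result, and the branching proposition you invoke via \cite{harriskudla92} is exactly the one the paper proves. One small discrepancy worth flagging: to show the class lands in $H^1_{\f}$, the paper does \emph{not} appeal to ordinarity; it uses the theorem of \cite{nekovarniziol} (syntomic regulators of algebraic cycles land in $H^1_{\mathrm g}$) together with the temperedness-driven equality $H^1_{\f}=H^1_{\mathrm g}$, and ordinarity plays no role at that step. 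Your step (iii) is the strategy of Section 10, and you are right --- and honest --- that it stalls. The paper verifies the reciprocity law only at the finite set $\fX^{\cris}_{(e)}$, which is not Zariski-dense; and even a complete reciprocity law plus non-vanishing of $\cL_p(\Pi\times\ucG,(f))$ at one specialisation would give non-vanishing of the big class $\Delta_\gamma(\Pi\times\ucG,(e))\in\wH^1$ and hence of its specialisations on a Zariski-dense open subset --- not at \emph{every} $\phi\in\fX^{(e)}$, which is what the conjecture asserts. For regions $(b)$, $(b')$ and the six definite-case regions the paper itself concedes that no candidate cycle construction is known, so your decision not to attempt them was the right one.
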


  \begin{proposition}
   \label{prop:h1fg} Let $\phi \in \fX^{\cl}$, with $c_1, c_2 \ge 2$. Then for each prime $\ell$ (including $\ell = p$), we have
   \[ H^1_{\f}(\Ql, \VV(\phi)) = H^1_{\mathrm{g}}(\Ql, \VV(\phi)).\]
  \end{proposition}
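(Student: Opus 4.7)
The plan is to check, prime by prime, that $H^1_{\mathrm{g}}(\Ql, \VV(\phi))/H^1_{\f}(\Ql, \VV(\phi))$ vanishes. For $\ell \ne p$, local Tate duality together with the Euler--Poincar\'e formula gives
\[
 \dim_{L} H^1_{\mathrm{g}}(\Ql, \VV(\phi))/H^1_{\f}(\Ql, \VV(\phi)) = \dim_{L} H^0(\Ql, \VV(\phi)^*(1)),
\]
so the quotient vanishes precisely when $\Frob_\ell$ has no eigenvalue $1$ on $(\VV(\phi)^*(1))^{I_\ell}$. For $\ell = p$, Bloch--Kato's fundamental exact sequence relates this quotient to $\Dcris(\VV(\phi))^{\varphi = 1}$ --- or, if $\VV(\phi)$ is only de~Rham rather than crystalline (which happens when $\phi$ is ramified at $p$), to the analogous space $D_{\mathrm{pst}}(\VV(\phi))^{\varphi = 1, N = 0}$ --- so the vanishing again reduces to excluding $1$ as a crystalline Frobenius eigenvalue (and similarly for the Tate dual).

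I would deduce both exclusions from \emph{purity}. The factor $V_p(\Pi)$ is pure of weight $k_1+k_2-3$: temperedness of $\Pi$ holds under our hypotheses (since $\Pi$ is of Arthur type A or B, not CAP), and the monodromy-weight property for the spin Galois representation is established in this setting by the work of Weissauer, Sorensen, and Mok (with the Yoshida-type case reducing to the $\GL_2$ situation). The factors $V_p(\Sigma_i(\phi))$ are pure of weights $c_i-1$ by the theorems of Deligne, Carayol and Saito; the hypothesis $c_i \ge 2$ enters precisely here to exclude weight-one specialisations where $V_p(\Sigma_i(\phi))$ would instead be an Artin piece outside the motivic framework. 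Since tensor products of pure Weil--Deligne representations are pure with weights adding, and a Tate twist by $(n)$ shifts the weight by $-2n$, the representation $\VV(\phi)$ is pure of weight $(k_1+k_2-3)+(c_1-1)+(c_2-1)-2(1+w) = -1$.

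Since $-1$ is odd, every Frobenius eigenvalue appearing on any graded piece of the monodromy filtration of $\VV(\phi)|_{G_\ell}$ is a Weil number of odd weight, hence has complex absolute value $\ell^{k/2}$ (or $p^{k/2}$ at $\ell = p$) for some odd integer $k$; in particular none equals $1$. The same argument applies verbatim to $\VV(\phi)^*(1) \cong \VV(\phi)$, where the isomorphism is the self-duality noted earlier (which holds on the nose because of the trivial-product condition $\widehat{\chi}_{\Pi}\widehat{\chi}_{\cG_1}\widehat{\chi}_{\cG_2} = 1$). Combining this with the formulae from the first paragraph yields $H^1_{\f}(\Ql, \VV(\phi)) = H^1_{\mathrm{g}}(\Ql, \VV(\phi))$ at every $\ell$.

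The principal technical point is invoking purity at finite primes where $\Pi$ or the local components $\Sigma_{i,\ell}(\phi)$ are ramified, since the monodromy operator may then be nontrivial and the exclusion of eigenvalue $1$ requires the full monodromy-weight conjecture (not just Ramanujan in the unramified sense). All the necessary inputs are nowadays available in the literature, so the proof is essentially a matter of assembling the correct citations; no step appears to require any new ideas.
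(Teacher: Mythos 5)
Your argument is correct in spirit but follows a genuinely different route from the paper, and has one imprecise step worth flagging. The paper's proof is considerably shorter: it invokes the general criterion $H^1_{\f}(\Ql, V) = H^1_{\mathrm{g}}(\Ql, V) \Longleftrightarrow L_\ell(V^*(1), 0) \neq \infty$, uses the self-duality $\VV(\phi) \cong \VV(\phi)^*(1)$, identifies $L_\ell(\VV(\phi), 0)$ with $L(\Pi_\ell \times \Sigma_\ell, \tfrac12)$, and then simply appeals to temperedness of the local automorphic representations to place all poles in $\Re(s) \le 0$, away from the central point $s = \tfrac12$. You instead recast the pole criterion as a Frobenius-eigenvalue condition and exclude the eigenvalue $1$ via purity (monodromy--weight) of $\VV(\phi)$. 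These are essentially two sides of the same coin under local--global compatibility, but your version requires strictly more: the full monodromy--weight property for the spin representation of $\GSp_4$ at every bad prime, rather than just temperedness of the automorphic $L$-factors, which is what the paper actually uses. You rightly identify this as the technical crux of your approach.

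The one step that is stated incorrectly is the assertion that \emph{every} graded piece of the monodromy filtration of $\VV(\phi)|_{G_\ell}$ is pure of odd weight. This is false in general: a pure representation of odd weight $w$ can easily have an unramified direct summand contributing a graded piece of weight $w$ together with, say, a Steinberg piece contributing graded pieces of weights $w \pm 1$, so both parities occur. What actually saves the argument is something slightly different: for a pure Weil--Deligne representation of weight $w$, the subspace $V^{I_\ell} \subset \ker N$ (and similarly $\Dcris(\VV(\phi))$ at $p$) lies in the part of the monodromy filtration of weight $\le w$. With $w = -1$ this gives Frobenius eigenvalues of complex absolute value $\le \ell^{-1/2} < 1$, so none equals $1$; the parity of $w$ is not what matters. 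If you replace the parity claim with this standard consequence of purity for $\ker N$, the proof is sound, though still heavier than the paper's.
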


  \begin{proof}
   Recall that for a geometric Galois representation $V$ and $\ell \ne p$ we let $H^1_{\mathrm{g}}(\Ql, V) = H^1(\Ql, V)$ and $H^1_{\f}(\Ql, V) = H^1(\Ql^{\mathrm{nr}}/\Ql, V^{I_\ell})$. The definition for $\ell = p$ is more complicated, but in either case we have
   \[
    H^1_{\f}(\Ql, V) = H^1_{\mathrm{g}}(\Ql, V) \qquad\Longleftrightarrow\qquad
    L_\ell(V^*(1), 0) \ne \infty.
   \]
   For $V = \VV(\phi)$, we have $V = V^*(1)$ and $L_\ell(V, 0) = L(\Pi_\ell \times \Sigma_\ell, \tfrac{1}{2})$, and since $\Pi_\ell$ and $\Sigma_\ell$ are tempered, all poles of $L(\Pi_\ell \times \Sigma_\ell, s)$ have real part $\le 0$.
  \end{proof}

  We noted above that for each $\phi \in \fX^{\cl}$, there is a unique 8-dimensional subspace $\VV^+(\phi)\subset \VV(\phi)$, stable under the decomposition group at $p$, such that $\VV^+(\phi)$ has all Hodge--Tate weights $\ge 1$ and the quotient has all weights $\le 0$ (the ``Panchishkin condition'').

  \begin{proposition}
   Suppose $\phi$ is either crystalline, or fully ramified. Then the map $H^1(\Qp, \VV^+(\phi)) \to H^1(\Qp, \VV(\phi))$ is injective, and its image is $H^1_{\f}(\Qp, \VV(\phi))$.
  \end{proposition}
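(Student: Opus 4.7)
The plan is to combine three standard ingredients: the long exact sequence in local Galois cohomology, the vanishing of local $G_{\Qp}$-invariants via temperedness, and the identification of the Greenberg local condition attached to the Panchishkin filtration with the Bloch--Kato local condition.

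First, I would write down the long exact sequence attached to $0 \to \VV^+ \to \VV \to \VV^- \to 0$,
\[
 \cdots \to H^0(\Qp,\VV^-) \to H^1(\Qp,\VV^+) \to H^1(\Qp,\VV) \to H^1(\Qp,\VV^-) \to \cdots,
\]
which simultaneously reduces injectivity of the middle arrow to the vanishing $H^0(\Qp,\VV^-)=0$ and identifies its image with the Greenberg local Selmer condition $H^1_{\mathrm{Gr}}(\Qp,\VV) := \ker\bigl(H^1(\Qp,\VV) \to H^1(\Qp,\VV^-)\bigr)$ cut out by the Panchishkin filtration. The vanishing $H^0(\Qp,\VV^-)=0$ (and analogously for $\VV^+$ and $\VV$) follows from the temperedness of $\Pi$ and the $\Sigma_i$: in the crystalline case, the Frobenius eigenvalues on $\Dcris(\VV)$ are Weil numbers of complex absolute value $p^{-1/2}$, so none equals~$1$, giving $\Dcris(\VV^\pm)^{\varphi=1}=0$; in the fully ramified case $\Dcris(\VV)=0$ by hypothesis, and any $G_{\Qp}$-invariant vector in $\VV^\pm$ would generate a trivial (hence crystalline) subrepresentation, forcing it to vanish.

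It remains to identify $H^1_{\mathrm{Gr}}(\Qp,\VV) = H^1_{\f}(\Qp,\VV)$. I would argue via a Lagrangian dimension count: the self-duality $\VV \cong \VV^*(1)$ together with the uniqueness of the Panchishkin subspace forces $(\VV^+)^*(1) \cong \VV^-$, so local Tate duality makes both $H^1_{\mathrm{Gr}}$ and $H^1_{\f}$ isotropic subspaces of $H^1(\Qp,\VV)$ with respect to the induced skew-symmetric pairing. The vanishings established in the previous paragraph imply, via Euler--Poincar\'e, that the two subspaces share the common dimension $\tfrac12\dim H^1(\Qp,\VV) = 8 = \dim \VV^+$, so each is a maximal isotropic subspace. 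A direct verification of one containment---concretely, that a crystalline (or, in the fully ramified case, de Rham) extension of $\Qp$ by $\VV$ restricts to the Bloch--Kato local condition at $\VV^-$---then completes the identification.

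The main obstacle is precisely this final step. While the identification $H^1_{\mathrm{Gr}}(\Qp,\VV) = H^1_{\f}(\Qp,\VV)$ is a standard consequence of the Panchishkin formalism (see, for instance, the Selmer-complex framework of Nekov\'a\v{r}, or Flach's treatment in the appendix to Mazur--Tate--Teitelbaum-type settings), keeping track of the non-critical hypotheses in the fully ramified case requires some care: there $\Dcris$ carries strictly less information, and one must first run the argument with $H^1_{\mathrm{g}}$ in place of $H^1_{\f}$ and then invoke \cref{prop:h1fg} at the end to return to the Bloch--Kato condition.
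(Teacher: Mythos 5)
Your proposal is correct and tracks the paper's own proof: both reduce the identification of the Greenberg local condition with $H^1_{\f}$ to the non-occurrence of $1$ and $p^{-1}$ as Frobenius eigenvalues on $\Dcris(\Qp, \VV(\phi))$, settled by the Weil-number bound $|\lambda| = p^{-1/2}$ in the crystalline case and by $\Dcris(\Qp, \VV(\phi)) = 0$ in the fully ramified case. The paper states the reduction in one sentence and treats the Greenberg--Panchishkin formalism as standard, whereas you reconstruct it explicitly via the long exact sequence, the Lagrangian structure coming from $\VV \cong \VV^*(1)$, and an Euler--Poincar\'e dimension count; you also correctly flag that in the fully ramified case one should pass through $H^1_{\mathrm{g}}$ and invoke the earlier $H^1_{\f} = H^1_{\mathrm{g}}$ proposition, a point the paper's terse proof elides. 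So same approach, with your version supplying the details the paper leaves implicit.
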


  \begin{proof}
   It suffices to show that neither $1$ nor $p^{-1}$ can be an eigenvalue of the Frobenius on $\Dcris(\Qp,\VV(\phi))$. If $\phi$ is fully ramified, then $\Dcris(\Qp,\VV(\phi)) = 0$ so the statement is clear. If $\phi$ is crystalline, then all eigenvalues of Frobenius on this space have complex absolute value $p^{-1/2}$, so none can be equal to 1 or $p^{-1}$.
  \end{proof}

 \subsection{Self-dual families and $p$-refinements}

  For $(\Pi, \ucG)$ as in \cref{sit:pairGalrep}, we can define a free rank 16 module $\VV_p(\Pi \times \ucG)$ over $\II$ as follows. Let us write $k = \left\lceil \tfrac{k_1 + k_2}{2}\right\rceil$ (integer ceiling function), and define
  \[ \VV_p(\Pi \times \ucG) = V_p(\Pi)(k-1) \otimes V_p(\ucG) = V_p(\Pi)^*(2-k)\otimes V_p(\ucG)^*.\]
  This is a free rank 16 module over $\II$ with a continuous $\II$-linear action of $\Gal(\QQbar / \QQ)$, satisfying $\VV_p(\Pi \times \ucG)^*(1) \cong \VV_p(\Pi \times \ucG)$, which interpolates the $\VV(\phi)$ for $\phi \in \cX^{\cl}(\ucG)$.

  \begin{proposition}
   For each of the 10 regions $\spadesuit$, there exists a rank 8 $\II$-submodule $\VV^+(\Pi \otimes \ucG, \spadesuit) \subset \VV_p(\Pi \times \ucG)$ stable under $\Gal(\QQbar_p / \Qp)$ such that for each $\phi \in \fX^{\spadesuit}$, the specialisation of $\VV^+(\Pi \otimes \ucG, \spadesuit)$ at $\phi$ is the subspace $\VV^+(\phi)$.
  \end{proposition}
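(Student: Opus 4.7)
The plan is to build $\VV^+(\Pi \otimes \ucG, \spadesuit)$ by hand as an explicit $\II$-linear sum of tensor products of canonical Galois-stable submodules of the three factors of $\VV_p(\Pi \times \ucG)$. The input data on each side of the tensor product are: (i) on the $\GSp_4$-side, the full Borel flag $0 \subset \Fil^3 V_p(\Pi) \subset \Fil^2 V_p(\Pi) \subset \Fil^1 V_p(\Pi) \subset V_p(\Pi)$, which exists by the Borel-ordinarity of $\Pi_p$ assumed in \cref{sit:pair}; the graded pieces are unramified with Frobenius eigenvalues $\delta, \gamma, \beta, \alpha$ respectively, of $p$-adic valuations $k_1 + k_2 - 3, k_1 - 1, k_2 - 2, 0$. (ii) on each $\GL_2$-side, the Ohta submodule $\Fil^1 V_p(\cG_i) \subset V_p(\cG_i)$ recalled just above the proposition; this is a rank-one $\II$-direct summand, and the graded pieces interpolate the unit-root and non-unit Frobenius eigenvalues $\fa_i$ and $\fb_i$.

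First, for each region $\spadesuit$ I would read off from the table of Frobenius eigenvalues which $8$ of the $16$ possible triples $(\xi_\Pi, \xi_1, \xi_2) \in \{\alpha,\beta,\gamma,\delta\} \times \{\fa_1,\fb_1\} \times \{\fa_2,\fb_2\}$ lie in the Panchishkin subspace $\VV^+(\phi)$ at classical $\phi$; by the pairing $\xi\leftrightarrow p^{w+1}/\xi$ this selection is the same for every $\phi \in \fX^{\spadesuit}$, and matches the parabolic $P^\spadesuit = (P_0,P_1,P_2)$ of the table. I then write down a candidate
\[
 \VV^+(\Pi \otimes \ucG, \spadesuit) \coloneqq \sum_{i} \Fil^{e_i^\Pi} V_p(\Pi)(k-1) \otimes \Fil^{e_i^1} V_p(\cG_1) \otimes \Fil^{e_i^2} V_p(\cG_2) \otimes \chi_{\mathrm{cyc}}^{\bc-1},
\]
with the $(e_i^\Pi, e_i^1, e_i^2)$ chosen minimally so that the list of graded-piece Frobenii agrees with the prescribed set. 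For $\spadesuit = (f)$ this is a single pure tensor of rank $2\cdot 2\cdot 2 = 8$; for $\spadesuit = (e), (c)$ etc.\ it is a sum of two or three tensor products whose pairwise intersections are again tensor products of Fils (a purely combinatorial fact about $\{0,1\}^3$). Each candidate is visibly stable under $\Gal(\QQbar_p / \Qp)$, as a sum of Galois-stable tensor products.

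Next, I would verify the two main claims: (a) the candidate is a free $\II$-module of rank $8$ and a direct summand of $\VV_p(\Pi \times \ucG)$, and (b) its specialisation at each $\phi \in \fX^\spadesuit$ equals $\VV^+(\phi)$. Claim (a) reduces, by inclusion-exclusion on the sum, to the statement that every tensor product of direct-summand $\II$-submodules of $V_p(\Pi) \otimes V_p(\ucG)$ is itself a direct summand of the right rank, which is automatic since $V_p(\Pi)$ is $L$-flat and $\Fil^1 V_p(\cG_i)$ is a direct summand of $V_p(\cG_i)$. For claim (b), at a classical specialisation $\phi$ the Frobenius eigenvalues on the graded pieces of the $\cG_i$-filtrations are respectively $\fa_i(\phi_i)$ and $\fb_i(\phi_i)$ by Ohta's interpolation property, while those of $V_p(\Pi)$ are the fixed $\alpha,\beta,\gamma,\delta$; combining these with the Tate twists by $(k-1)$ and by $\bc-1$, the Hodge-Tate weights of each graded piece of the candidate are exactly those predicted by the row of the table attached to $\spadesuit$, so the specialisation lies in $\VV^+(\phi)$ and both are of rank $8$, forcing equality.

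The main obstacle is claim (a) in the mixed regions such as $(c), (d), (d'), (e)$, where $\VV^+$ is a non-trivial sum of tensor products: one needs that the sum is \emph{saturated} in $\VV_p(\Pi\times\ucG)$, i.e.\ that the cokernel of the inclusion is $\II$-torsion free, rather than merely having the correct generic rank over $\operatorname{Frac}\II$. This is where the argument must genuinely use the freeness of $V_p(\ucG)$ over $\II$ (and hence of all the $\Fil^*$ pieces), combined with the combinatorial fact that the intersection of any two summands in the construction is itself a tensor product of filtration pieces and thus a free $\II$-summand of the expected rank, so the inclusion-exclusion formula computes the total rank honestly. Once this is in place, the specialisation check in (b) is immediate at every $\phi$.
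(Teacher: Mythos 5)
Your proposal and the paper's proof take the same basic route: use the Galois-stable ordinary filtrations on the three tensor factors and take the tensor product of appropriate pieces; the paper simply declares this to ``follow from standard properties'' and works out the single example $\spadesuit = (f)$, where $\VV^+$ is the pure tensor $\Fil^2 V_p(\Pi)(k-1) \otimes V_p(\cG_1) \otimes V_p(\cG_2)$. Your version is a genuine expansion of the paper's one-liner, and it is essentially correct, so let me only flag the two points where it is imprecise.

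First, a small bookkeeping issue: you attribute the Borel ordinarity of $\Pi_p$ to \cref{sit:pair}, but that situation only assumes $\Pi_p$ unramified; the ordinarity hypothesis is added in \cref{sit:pairGalrep}, which is the situation actually in force for this proposition. (The remark immediately after the proposition notes that one only needs ordinarity for the parabolic $P^\spadesuit$, not the Borel, which is the paper's reason for singling out region $(f)$ where only Klingen-ordinarity is used -- your proof over-assumes slightly but this is harmless in the proposition's setting.)

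Second, and more substantively, your handling of saturation in the mixed regions is not quite tight. You argue that because the pairwise intersections of the tensor summands are free $\II$-summands of the ``expected'' rank, inclusion–exclusion computes the rank honestly and hence the sum is saturated. But correct generic rank plus free intersections does not in general imply saturation: a sum of two rank-$1$ direct summands of $\II^2$ can have torsion cokernel even when the intersection is zero. The correct argument is more direct: since each $\Fil^i$ is an $\II$-direct summand of the relevant free module, one can choose compatible direct-sum decompositions $V_p(\Pi) = \bigoplus_{j=0}^3 W^\Pi_j$, $V_p(\cG_i) = W^i_0 \oplus W^i_1$ refining the filtrations, whereupon $\VV_p(\Pi\times\ucG)$ decomposes as $\bigoplus W^\Pi_{j_0} \otimes W^1_{j_1} \otimes W^2_{j_2}$ (up to the constant Tate twists), and your candidate $\VV^+(\Pi\otimes\ucG, \spadesuit)$ is precisely the sub-sum over the order ideal in $\{0,1,2,3\}\times\{0,1\}\times\{0,1\}$ determined by $\spadesuit$. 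It is then a direct summand by construction, Galois-stable because each $\Fil$ is, and its specialisations are computed graded piece by graded piece via the Hodge--Tate weights (which is what you want, since the Frobenius eigenvalues you cite from the table are only defined for crystalline $\phi$, whereas the specialisation claim must hold for all $\phi \in \fX^{\spadesuit}$). You already gesture at exactly this by invoking the freeness of $V_p(\ucG)$, so this is a matter of replacing inclusion–exclusion-on-ranks with the splitting argument, not of repairing a wrong idea.

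Finally, your convention for which Frobenius eigenvalue sits on which graded piece of $V_p(\Pi)$ ($\Fil^3$ carrying $\delta$ versus $\alpha$) appears reversed relative to the paper's: the paper's worked example assigns $\Fil^2 V_p(\Pi)$ Hodge--Tate weights $\{0, -1-r_2\}$, so the deep filtration pieces carry the small Frobenius valuations $\alpha, \beta$. This does not change the structure of your argument, only the indexing $(e_i^\Pi, e_i^1, e_i^2)$ you would write down in each row of the table.
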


  \begin{proof}
   This follows from standard properties of Galois representations associated to automorphic forms for $\GSp_4$ and to Hida families. For instance, in region $(f)$ we use the fact that $V_p(\Pi)$ has a 2-dimensional local subrepresentation at $p$ with Hodge--Tate weights $\{0, -1-r_2\}$. Tensoring this with $V_p(\cG_1) \otimes V_p(\cG_2)$ gives an 8-dimensional subrepresentation of the tensor product; one checks that the specialisations of this subrepresentation at $\phi \in \fX^{\cl}$ satisfies the Panchishkin condition if and only if $\phi \in \fX^{(f)}$.
  \end{proof}

  \begin{remark}
   Compare \cite[Lemma 1.15]{darmonrotger16}. As is clear from the argument given above for region $(f)$, it is not necessary to assume that $\Pi$ and the families $\cG_i$ are nearly-ordinary for the Borel subgroups to construct $\VV^+(\Pi \otimes \ucG, \spadesuit)$; it suffices to assume near-ordinarity for the parabolic subgroup corresponding to $\spadesuit$ in the table of \cref{rem:parabolics}.
  \end{remark}

 \subsection{Families of Selmer groups}

  By standard methods (cf.~\cite{nekovar06}) we can define a Selmer complex $\RGt(\QQ, \Pi \otimes \ucG; \spadesuit)$ in the derived category of $\II$-modules, using $\VV^+(\Pi \otimes \ucG, \spadesuit)$ as the local condition at $p$; the formation of this complex commutes with (derived) base-change, and the $H^1$ of its specialisation at a point $\phi \in \fX^{\spadesuit}$ is exactly $H^1_{\mathrm{f}}(\QQ, \VV(\phi))$.

  \begin{conjecture}
   \label{conj:arithpGGP}
   If $(\Pi, \ucG)$ is indefinite, then
   \[
    \operatorname{rank}_{\II} \widetilde{H}^1(\QQ, \Pi \otimes \ucG; \spadesuit) =
    \begin{cases}
     0 & \text{if $\spadesuit \in \{a,a',c,d,d',f\}$},\\
     1 & \text{if $\spadesuit \in \{b,b',e\}$}.
    \end{cases}
   \]
   Moreover, if $\spadesuit \in \{b,b',e\}$, there should be a non-torsion element $\Delta(\Pi \times \ucG, \spadesuit)$ of this $\widetilde{H}^1$ whose specialisation at every $\phi \in \fX^{\spadesuit}$ is the image under the \'etale cycle class map of a $\QQbar$-linear combination of algebraic cycles. The same should hold for definite $(\Pi, \ucG)$ with the roles of regions $\{a,a',c,d,d',f\}$ and $\{b,b',e\}$ interchanged.
  \end{conjecture}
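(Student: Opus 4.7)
\noindent\emph{Proof proposal for \cref{conj:arithpGGP}.}
The conjecture has two distinct aspects: the rank statement for the $\II$-adic Selmer complex, and the existence of a cycle class generating the rank-$1$ submodule in the appropriate regions. The rank assertion is a two-variable version of the Beilinson--Bloch--Kato conjecture for $\VV_p(\Pi \times \ucG)$, and is out of reach by any currently known technique in full generality; however, once a cycle class exists and a reciprocity law relates it to one of the $p$-adic $L$-functions of the previous section, the rank-$\le 1$ bound in sign $-1$ regions and the rank-$0$ bound in the adjacent sign $+1$ regions should follow from a Kolyvagin-style argument, provided enough auxiliary Euler-system-like data can be produced (an ingredient only presently available in subregions treated via Beilinson--Flach and Kato classes). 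So the core of the plan is the construction of the cycle class.

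The only case in which I see how to begin is $\spadesuit = (e)$ in the indefinite case. Starting from the diagonal embedding $\iota : H \hookrightarrow G \times \tilde{H}$ defined in the Conventions section, one obtains at the level of Shimura varieties a closed immersion whose image has codimension $3$. For a classical specialisation $\phi \in \fX^{(e)}$ of weights $(k_1, k_2; c_1, c_2)$, the target representation $\VV(\phi)$ appears (up to a Tate twist) as a direct summand of the \'etale cohomology of a Kuga--Sato-type variety built from the Siegel threefold and two modular curves with coefficients in $\cV_{(k_1-3,k_2-3)} \boxtimes \Sym^{c_1-2} \boxtimes \Sym^{c_2-2}$. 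The inequalities defining region $(e)$ are precisely those which guarantee the existence of a nonzero $H$-equivariant branching map from $\iota^{\ast}$ of this coefficient sheaf to a one-dimensional character of $H$; contracting with this branching produces a codimension-$3$ algebraic cycle on the total Kuga--Sato variety, whose \'etale cycle class, projected to the $\Pi \otimes \Sigma_1(\phi) \otimes \Sigma_2(\phi)$-isotypic component, yields the desired class in $H^1(\QQ, \VV(\phi))$. That the class lands in $H^1_\f$ follows at $\ell \ne p$ from \cref{prop:h1fg} (using temperedness of $\Pi_v$ and $\Sigma_v$), and at $p$ from the Hodge-filtration identification of $\VV^+(\phi)$ via the Panchishkin condition.

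For the $\II$-adic interpolation, the plan is to place the above construction into an ordinary family. Using Ohta's control theorem, one realises $V_p(\ucG)^*$ as a quotient of the continuous \'etale cohomology of the tower $Y_1(Np^r) \times Y_1(Np^r) \times \mu_{p^r}^{\circ}$; on the $\GSp_4$ side, one applies the Klingen-ordinary projector $e'_{\Kl}$ on the tower of Siegel threefolds (with coefficients in $\cV_{(k_1-3,k_2-3)}$). The Gysin push-forward of the family of diagonal cycles through the various levels of the Hida tower should be norm-compatible up to an explicit Euler factor matching $\cE_p^{(e)}$, assembling to a single class $\Delta(\Pi \times \ucG, (e)) \in \widetilde{H}^1(\QQ, \Pi \otimes \ucG; (e))$; this is the $\GSp_4 \times \GL_2 \times \GL_2$ analogue of the class constructed in the $\GL_2 \times \GL_2 \times \GL_2$ setting of \cite{BSV1}, adapted to the present embedding.

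The main obstacle, beyond the bookkeeping of the interpolation, is to show that $\Delta(\Pi \times \ucG, (e))$ is non-torsion. No direct geometric argument is known, and the only viable strategy is indirect: prove an explicit reciprocity law identifying the image of $\Delta(\Pi \times \ucG, (e))$ under the Perrin-Riou regulator, on one of the three subquotients of $\VV_p(\Pi \times \ucG)/\VV^+(\Pi \times \ucG, (e))$, with the restriction to $\fX^{(e)}$ of the $p$-adic $L$-function in one of the adjacent $+$-regions $(c),(d),(d'),(f)$. Non-vanishing of the cohomology class then reduces to non-vanishing of the $p$-adic $L$-function, which in turn is controlled by non-vanishing of complex central $L$-values on the relevant branch. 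The reciprocity law with region $(f)$, a partial version of which is stated in the introduction, is the most tractable route; the other three pairings, the construction of any cycle in regions $\{b, b'\}$, and all cases in the definite setting remain open problems for future work.
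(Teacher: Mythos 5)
This statement is a \emph{conjecture}, and the paper offers no proof of it; what you are comparing against is the collection of partial constructions in Sections~\ref{sect:famGale} and the final section that supply evidence for it. Your proposal correctly recognises this: you identify the rank statement as out of reach, and focus on (i) constructing the cycle class in region $(e)$ and (ii) arguing non-torsion via a reciprocity law. Both of these map onto what the paper actually does (the class $\Delta_\gamma(\Pi\times\ucG,(e))$ of \cref{sect:famGale} and the partial reciprocity law with region $(f)$), so your overall strategy matches the paper's.

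There are, however, a few substantive imprecisions worth flagging. First, for region $(e)$ the paper's interpolation is at full \emph{Iwahori} level at $p$, using the Borel-ordinary eigenvector $w^{\Iw}_{\alpha\beta}$ and the open $\overline{B}_H$-orbit representative $u_\Bor$ on $G/B_G$; the parabolic attached to region $(e)$ in the table of \cref{rem:parabolics} is $(\Bor,\Bor,\Bor)$. Your Klingen-ordinary projector $e'_{\Kl}$ is the right tool for region $(f)$, not $(e)$; using it here would lose the $\Fil^1$ and $\Fil^3$ steps of the ordinary filtration on $V_p(\Pi)$ needed to isolate $\VV^+(\Pi\times\ucG,(e))$. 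Second, your argument that the cycle class lands in $H^1_\f$ at $p$ is not quite right: the Panchishkin condition merely \emph{defines} the local condition, and showing that the étale realisation of a motivic class actually lies in it requires the syntomic comparison of Nekov\'a\v{r}--Nizio\l\ to place it in $H^1_{\mathrm g}(\Qp,\VV(\phi))$, followed by \cref{prop:h1fg} (which covers $\ell=p$ as well as $\ell\ne p$) to identify $H^1_{\mathrm g}$ with $H^1_\f$; your proposal omits this input. Third, for the $\II$-adic interpolation the paper does not simply push forward through the Hida tower: the key mechanism is the open $H$-orbit on the flag variety $(G\times\tilde H)/(\overline{\Bor}_G\times\overline{\Bor}_{\tilde H})$ and the machinery of \cite{loeffler-spherical,LRZ}, which produces norm-compatibility and, crucially, independence of the resulting class from the choice of weights $(t_1,t_2)$; Gysin pushforward alone does not yield a \emph{single} element interpolating the $\Delta^{[r_1,r_2,t_1,t_2]}(\cK\cap V_r)$ for all $(t_1,t_2)$ in region $(e)$. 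With these corrections your programme would coincide with the paper's.
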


  This conjecture is considerably less precise than Conjecture \ref{conj:pGGP}, since we have not specified the interpolating property of $\Delta(\Pi \times \ucG, \spadesuit)$ precisely, only up to $\QQbar^\times$. We can remove this ambiguity in either of two ways:
  \begin{itemize}
   \item By specifying precisely which algebraic cycle the class should interpolate. We shall do this below for $\spadesuit = (e)$; we give the details only for $(\Pi,\ucG)$ split, but the construction readily extends to the more general indefinite case. However, in the other regions we do not know how to construct any interesting algebraic cycles.

   \item By specifying the image of $\Delta(\Pi \times \ucG, \spadesuit)$ under the Perrin-Riou regulator map. This will be the topic of our final set of conjectures.
  \end{itemize}

\section{Constructions II: Interpolating cycles in region \texorpdfstring{$(e)$}{(e)}}
 \label{sect:famGale}
 Let $(\Pi, \ucG)$ be as in \cref{sit:pairGalrep}, and assume in addition that $(\Pi, \ucG)$ is split (i.e.~the local root numbers at all finite primes are $+1$).

 We shall now define a collection of cohomology classes in $H^1_{\f}(\QQ, \VV(\phi))$ arising from algebraic cycles, for each specialisation $\phi \in \fX^{(e)}(\ucG)$; subsequently, we shall show that these interpolate as $\phi$ varies.

 \subsection{Geometric classes}

  Let $\phi \in \fX^{(e)}$, and write $\Sigma_i = \Sigma_i(\cG, \phi)$. We shall define, rather than a single cohomology class, a ``cohomology-valued period pairing'' mapping vectors in the Whittaker models of $\Pif$ and $\Sif$ to the cohomology group $H^1(\QQ, \VV)$.

  \subsubsection*{Trivial coefficients} We first give the construction in a simple case: we assume that  $(k_1, k_2) = (3, 3)$ and $(c_1, c_2) = (2, 2)$, so $\Pi$ and $\Sigma$ contribute to \'etale cohomology of the Shimura varieties for $G$ and $H$ with trivial coefficients. Note that this does indeed correspond to a point of region $(e)$.

  \begin{notation} We write $\cK$ for a triple of open compacts $(K_G, K_1, K_2)$ with $K_G \subset G(\Af)$, and $K_1$, $K_2 \subset \GL_2(\Af)$. For such a $\cK$ we write $Y(\cK)$ for the product of Shimura varieties $Y_G(K_G) \times Y_{\GL_2}(K_1) \times Y_{\GL_2}(K_2)$; and we write $\cK \cap H$ for the intersection of $\cK$ with $H(\Af)$, embedded in $\GSp_4 \times \GL_2 \times \GL_2$ via $(h_1, h_2) \mapsto (\iota(h_1,h_2), h_1, h_2)$.
  \end{notation}

  \begin{definition}
   For $\cK$ as above, the \emph{diagonal cycle} of level $\cK$ is the class $\Delta(\cK) \in \operatorname{CH}^3(Y(\cK))$ given by the image of the map of Shimura varieties
   \[ Y_H(\cK\cap H) \xrightarrow{(\iota, \mathrm{id})} Y(\cK).\]
  \end{definition}

  The \'etale cycle class of the diagonal cycle lies in $H^6_{\et}\left(Y(\cK), \Qp(3)\right)$. This space decomposes as a direct sum of generalised eigenspaces for the spherical Hecke algebra; our assumptions on $\Pi$ and the $\Sigma_i$ implies that the $(\Pi^\vee, \Sigma_1^\vee, \Sigma_2^\vee)$ eigenspace does not contribute to cohomology over $\QQbar$ in degrees other than 5, so we obtain a projection map to the space
  \[ H^1\Big(\QQ, H^5_{\et}\left( Y(\cK)_{\QQbar},  \Qp(3)\right)[\Pi^\vee \times \Sigma_1^\vee \times \Sigma_2^\vee]\Big).\]

  If $\operatorname{vol}_H$ denotes a choice of Haar measure on $H(\Af)$, then one can check that the renormalised classes
  \[
   \operatorname{vol}_H\left(\cK \cap H\right) \cdot \Delta(\cK)
  \]
  are compatible under pushforward maps for varying $\cK$.

  By construction, vectors in the Whittaker model $\cW(\Pif)$ give $\Gal(\QQbar/\QQ)$-equivariant maps from $V_p(\Pi)$ to the $\Pif$-part of the direct limit $\varinjlim_{K_G} H^3_{\et, c}(Y_{G}(K_G)_{\QQbar}, \Qp)$, or dually from $\varprojlim_{K_G} H^3_{\et}(Y_{G, \QQbar}, \cV_\lambda^\vee(3))[\Pif^\vee]$ to $V_p(\Pi)^*$ (``modular parameterisations'' of this Galois representation). The same applies to the $\Sigma_i$; so each triple of Whittaker functions $w_0 \times w_1 \times w_2 \in \cW(\Pif) \times \cW(\Sigma_{1, \f}) \times \cW(\Sigma_{2, \f})$ gives a homomorphism
  \[
   \varprojlim_{\cK} H^5_{\et}\left( Y(\cK)_{\QQbar},  \Qp(3)\right)[\Pi^\vee \times \Sigma_1^\vee \times \Sigma_2^\vee] \to
   V_p(\Pi)^* \otimes V_p(\Sigma_1)^* \otimes V_p(\Sigma_2)^*(-2) = \VV(\phi).\]

  Applying this modular parametrisation to the compatible family of classes $\operatorname{vol}_H\left(\cK \cap H\right) \Delta(\cK)$, we have defined a linear map
  \[ \Delta(\Pi, \Sigma): \cW(\Pif) \otimes \cW(\Sigma_{1, \f}) \otimes \cW(\Sigma_{2, \f}) \to H^1(\QQ, \VV(\phi)); \]
  and one checks readily that this map is $H(\Af)$-equivariant. Using the main theorem of \cite{nekovarniziol} one sees that it takes values in $H^1_{\mathrm{g}}$ locally at $p$; hence it lands in the global $H^1_{\f}$ by Proposition \ref{prop:h1fg}.

 \subsubsection{General coefficients}

  We now relax the assumption on the coefficients, allowing any \emph{cohomological} weights $(k_1, k_2)$ for $\Pi$ and $(c_1, c_2)$ for $\Sigma$. In this case, we shall construct geometric classes using the formalism of \emph{motivic cohomology with coefficients}.

  The coefficient systems arise from highest-weight algebraic representations $V_{G, \lambda}$ and $V_{H, \mu}$. We write $\lambda = (r_1, r_2; r_1 + r_2)$ and $\mu = (t_1, t_2; t_1 + t_2)$ for some $r_i, t_i \ge 0$; these are related to our parameters above by
  \[ (k_1, k_2) = (r_1 + 3, r_2 + 3), \qquad (c_1, c_2) = (t_1 + 2, t_2 + 2).\]
  for some $r_i, t_i \ge 0$. Note that the $H$-representation $V_{G, \lambda} \otimes V_{H, \mu} \otimes \det^{2-w}$ has trivial central character.

  \begin{proposition}
   We have $\Hom_{H}( V_\lambda^G \otimes V_\mu^H \otimes \det^{2-w}, \mathrm{triv}) \ne 0$ if and only if $(c_1, c_2)$ lies in region (e).
  \end{proposition}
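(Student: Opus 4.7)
The plan is to reformulate the question as one of branching between the derived groups $G^{\mathrm{der}} = \mathrm{Sp}_4 \cong \mathrm{Spin}(5)$ and $H^{\mathrm{der}} = \mathrm{SL}_2 \times \mathrm{SL}_2 \cong \mathrm{Spin}(4)$, invoke Weyl's classical branching rule, and translate the resulting interlacing inequalities into the conditions cutting out region $(e)$.

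First, since everything is semisimple in characteristic zero, $\Hom_H(V_\lambda^G \otimes V_\mu^H \otimes \det^{2-w}, \mathrm{triv})$ is non-zero iff the irreducible $H$-representation $(V_\mu^H)^\vee \otimes \det^{w-2}$ occurs in the restriction $V_\lambda^G|_H$. Restricting further to $H^{\mathrm{der}}$, where the determinant character is trivial, this becomes the question of whether $\Sym^{t_1}(\mathrm{std}_1) \boxtimes \Sym^{t_2}(\mathrm{std}_2)$ appears in $V_\lambda^G|_{H^{\mathrm{der}}}$; if it does, then since this branching multiplicity is automatically one, the unique character of $H$ by which this constituent must be twisted in order to upgrade to a genuine $H$-subrepresentation of $V_\lambda^G|_H$ is pinned down by matching central characters on $Z(H)^\circ = \{(zI_2, zI_2) : z \in \mathbf{G}_m\}$.

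Next, under the exceptional isomorphism the $\mathrm{Sp}_4$-highest weight $(r_1, r_2)$ (with $r_i = k_i - 3$) corresponds to the $\mathrm{Spin}(5)$-highest weight $\bigl(\tfrac{r_1+r_2}{2}, \tfrac{r_1-r_2}{2}\bigr)$, while $\Sym^{t_1} \boxtimes \Sym^{t_2}$ (with $t_i = c_i - 2$) corresponds to the $\mathrm{Spin}(4)$-highest weight $\bigl(\tfrac{t_1+t_2}{2}, \tfrac{t_1-t_2}{2}\bigr)$. Weyl's branching rule for $\mathrm{Spin}(5) \supset \mathrm{Spin}(4)$ (with all multiplicities zero or one) then yields the interlacing criterion
\[
 \tfrac{r_1+r_2}{2} \,\geq\, \tfrac{t_1+t_2}{2} \,\geq\, \tfrac{r_1-r_2}{2} \,\geq\, \tfrac{|t_1-t_2|}{2}.
\]
A direct check of central characters, using the relation $2(w-2) = r_1 + r_2 + t_1 + t_2$ (which follows from $w = \tfrac{k_1 + k_2 + c_1 + c_2 - 6}{2}$), confirms that the twist singled out in the previous paragraph is exactly $\det^{w-2}$.

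Finally, substituting $r_i = k_i - 3$ and $t_i = c_i - 2$, the interlacing above becomes $k_1 - k_2 + 4 \leq c_1 + c_2 \leq k_1 + k_2 - 2$ together with $|c_1 - c_2| \leq k_1 - k_2$, which by inspection is precisely the set of inequalities defining region $(e)$ in \cref{fig:GGP} (matching the labels $k_1 + k_2 - 2$, $k_1 - k_2 + 4$, and $k_1 - k_2$ that appear on its boundary). The main obstacle is convention-juggling in the central character step: the re-parametrisation of highest weights under the $C_2 \leftrightarrow B_2$ duality, together with pinning down how the third coordinate of $\lambda = (r_1, r_2; r_1 + r_2)$ encodes the similitude character of $V_\lambda^G$, is fiddly but routine; once these conventions are fixed, the result follows immediately from the classical branching formula.
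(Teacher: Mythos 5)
Your proof is correct and is essentially a fleshed-out version of what the paper asserts in one line (``This is a restatement of the standard branching law for algebraic representations''). The reduction to derived groups, the re-parametrisation via $\mathrm{Sp}_4 \cong \mathrm{Spin}(5)$ and $\mathrm{SL}_2 \times \mathrm{SL}_2 \cong \mathrm{Spin}(4)$, the Weyl interlacing rule, and the central-character bookkeeping via $2(w-2) = r_1+r_2+t_1+t_2$ all check out, and the resulting inequalities $k_1-k_2+4 \le c_1+c_2 \le k_1+k_2-2$ and $|c_1-c_2| \le k_1-k_2$ do indeed cut out region $(e)$.
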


  \begin{proof}
   This is a restatement of the standard branching law for algebraic representations.
  \end{proof}

  Associated to these algebraic representations, there are equivariant relative Chow motives $\cV_{\lambda}^G$ and $\cV_\mu^H$ on the Shimura varieties $Y_G$ and $Y_H$, via Ancona's functor (see \cite{ancona15}, \cite[\S 6]{LSZ17}). Using the compatibility of Ancona's functor with morphisms of PEL data established by Torzewski \cite{torzewski18}, for each $\cK$, we have a homomorphism
  \begin{align*} H^0_{\mot}(Y_H(\cK\cap H), \QQ)
  &\to H^0_{\mot}(Y_H(\cK\cap H), \tilde{\iota}^*( \cV_{G, \lambda}^\vee \otimes \cV_{H, \mu}^\vee)(2-w) )\\
  &\to H^6_{\mot}(Y(\cK), \cV_{G, \lambda}^\vee \otimes \cV_{H, \mu}^\vee(5-w) ).
  \end{align*}
  We can define $\Delta^{[r_1, r_2, t_1, t_2]}(\cK)$ to be the image of the identity class $1 \in H^0_{\mot}(Y_H(\cK\cap H), \QQ)$ under this map. As before, these classes become compatible under pushforwards for varying $\cK$ after renormalising by $\operatorname{vol}_H(\cK\cap H)$.

  \begin{remark}
   If $(r_1, r_2) = (t_1, t_2) = (0, 0)$, so that $w = 2$, then the motivic cohomology group $H^6_{\mot}(Y(\cK), \cV_{G, \lambda}^\vee \otimes \cV_{H, \mu}^\vee(5-w) ) = H^6_{\mot}(Y(\cK), \QQ(3) )$ is isomorphic to the Chow group $\operatorname{CH}^3(Y(\cK))_{\QQ}$; and $\Delta^{[r_1, r_2, t_1, t_2]}(\cK)$ is simply the class of the algebraic cycle $\Delta(\cK)$ above. Thus our constructions for general $r_i$ and $t_i$ are indeed a generalisation of the construction given for trivial coefficients above.
  \end{remark}

  Applying the \'etale realisation map and projecting to the $(\Pi^\vee, \Sigma^\vee)$-isotypical part, we obtain a class in the space
  \[
   H^1\Big(\QQ, \varprojlim_{\cK} H^5_{\et}(Y(\cK)_{\QQbar}, \cV_{G, \lambda}^\vee \otimes \cV_{H, \mu}^\vee(5-w) )
   [\Pi^\vee \times \Sigma^\vee]\Big).
  \]
  Just as in the trivial-coefficient case, we can regard vectors in the Whittaker model as $\Gal(\QQbar/\QQ)$-equivariant maps from the above representation to $V(\phi)$. So we obtain a pairing
  \[ \Delta(\Pi, \Sigma): \cW(\Pif) \otimes \cW(\Sigma_{1, \f}) \otimes \cW(\Sigma_{2, \f}) \to H^1_{\f}(\QQ, \VV(\phi)) \]
  as before.

  \begin{remark}
   The \emph{arithmetic Gross--Prasad conjecture}\footnote{We have not been able to find a precise statement of such a conjecture in the literature, and we shall not attempt to formulate one here. However, the formulation of the analogous conjecture for unitary groups is standard; and the expectation that it should generalise to orthogonal, and spin similitude, groups is well-known to experts.} for $\GSpin(5) \times \GSpin(4)$ would posit a relation between the Beilinson--Bloch height of the motivic class $\Delta^{[r_1, r_2, t_1, t_2]}(\cK)$ (projected to the $(\Pi, \Sigma)$-isotypical component) and the value $\Lambda'(\Pi \times \Sigma, \tfrac{1}{2})$. We shall not consider height pairings or derivatives of complex $L$-values explicitly in the present work, but this conjecture serves as important motivation for expecting these classes to be arithmetically interesting.
  \end{remark}

 \subsection{Interpolation in families}

  \subsubsection{Local data at $p$}

   Let $\phi \in \fX^{(e)}$. We want to define vectors in the local Whittaker models $\cW(\Pi_p)$, $\cW(\Sigma_{1,p}(\phi))$, and $\cW(\Sigma_{2,p}(\phi))$ which are suited to $p$-adic interpolation; note that the recipe here is somewhat different from region $(f)$ above (and will give a different Euler factor in the crystalline case).

   Let $\Iw(p) = \{ g \in G(\Zp): g\bmod p \in B_G\}$. The space $\cW(\Pi_p)^{\Iw(p)}$ is 8-dimensional, and there is a 1-dimensional eigenspace on which the (integrally normalised) Hecke operators $U_{1, \Iw}$ and $U_{2, \Iw}$ act as $p$-adic units, with eigenvalues $\alpha$ and $\tfrac{\alpha\beta}{p^{k_2 - 2}}$ respectively. This eigenspace has a unique basis vector $w_{\alpha\beta}^{\Iw}$ such that $w_{\alpha\beta}^{\Iw}(1) = 1$.

   \begin{lemma}
    The lower Borel $\overline{\Bor}_H$ of $H$ acts on the flag variety $G / \Bor_G$ with a unique open orbit, represented by the element
    \[ u_\Bor =
     \begin{smatrix}
      1 \\
      1 & \phantom{-}1 \\
      0 & \phantom{-}1 & \phantom{-}1\\
      0 & -1 &-1& 1
     \end{smatrix} \in \overline{N}_G(\Zp).
    \]
    The $\overline{B}_H$-stabiliser of any point in this orbit is equal to $H \cap Z_G$.
   \end{lemma}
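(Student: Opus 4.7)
The strategy is a direct matrix computation: pin down the stabiliser of the specific point $u_\Bor \Bor_G \in G/\Bor_G$ inside $\overline{\Bor}_H$, and then deduce openness and uniqueness of the orbit from a dimension count together with irreducibility of $G/\Bor_G$.

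First I would compute the inverse $u_\Bor^{-1}$ explicitly (using that $u_\Bor - 1$ is strictly lower-triangular and nilpotent of order $4$, so $u_\Bor^{-1} = \sum_{j=0}^{3}(1-u_\Bor)^j$). Then, writing a general element of $\overline{\Bor}_H$ as $\iota(h_1, h_2)$ with $h_i = \stbt{a_i}{0}{c_i}{d_i}$ and $a_1 d_1 = a_2 d_2$, I would form the conjugate
\[
 M(h_1, h_2) \;\coloneqq\; u_\Bor^{-1}\, \iota(h_1, h_2)\, u_\Bor
\]
and demand that it be upper-triangular (equivalently that $\iota(h_1, h_2)$ stabilise $u_\Bor \Bor_G$). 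Reading off the six strict-lower-triangular entries of $M$ gives a linear system in $(a_i, c_i, d_i)$ which collapses to $c_1 = c_2 = 0$ and $a_1 = a_2 = d_1 = d_2$. In other words $\iota(h_1, h_2)$ is a scalar matrix, i.e.\ an element of $H \cap Z_G$. Conversely $H \cap Z_G$ obviously fixes every point of $G/\Bor_G$. This establishes the stabiliser claim at the basepoint $u_\Bor \Bor_G$; since $H \cap Z_G$ is central in $H$ (hence in $\overline{\Bor}_H$), the stabiliser at every other point of the $\overline{\Bor}_H$-orbit is the same subgroup.

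Next, to see that the orbit of $u_\Bor \Bor_G$ is open, I would compare dimensions: $\dim \overline{\Bor}_H = 5$ (it is the fibre product of two copies of the lower Borel of $\GL_2$ over $\mathbf{G}_m$), the stabiliser has dimension $1$, and $\dim G/\Bor_G = 4$ (the number of positive roots of $\GSp_4$). The orbit therefore has dimension $4 = \dim G/\Bor_G$, so it is open in $G/\Bor_G$. Uniqueness of the open $\overline{\Bor}_H$-orbit is formal: $G/\Bor_G$ is irreducible, so any two open subsets intersect, and an orbit of a connected group is irreducible; hence a second open orbit would meet the first, contradicting disjointness of orbits.

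The only non-routine part is the matrix computation in the first step, but it is a short and self-contained exercise; the dimension count and uniqueness argument are formal. I would organise the write-up so that the matrix $M(h_1, h_2)$ is displayed explicitly, making it transparent that the six lower-triangular entries force the system $a_1 = a_2 = d_1 = d_2$, $c_1 = c_2 = 0$.
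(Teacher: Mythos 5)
Your proof is correct, and since the paper states this lemma without proof (treating it as a routine verification), the direct matrix computation you propose is exactly the natural argument. I carried it out to be sure: with $h_i = \stbt{a_i}{0}{c_i}{d_i}$ (so $a_1 d_1 = a_2 d_2$), one finds
\[
 u_\Bor^{-1}\,\iota(h_1,h_2)\,u_\Bor
 = \begin{pmatrix}
  a_1 & 0 & 0 & 0\\
  a_2 - a_1 & a_2 & 0 & 0\\
  a_1 - a_2 + c_2 & c_2 + d_2 - a_2 & d_2 & 0\\
  c_1 + c_2 & c_2 + d_2 - d_1 & d_2 - d_1 & d_1
 \end{pmatrix},
\]
and the vanishing of the six strictly lower-triangular entries forces $a_1 = a_2 = d_1 = d_2$ and $c_1 = c_2 = 0$, i.e.\ $\iota(h_1,h_2)$ is a scalar matrix, exactly as you predict. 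The dimension count $\dim \overline{\Bor}_H - \dim(H \cap Z_G) = 5 - 1 = 4 = \dim G/\Bor_G$ shows the orbit is open, and the uniqueness argument via irreducibility of $G/\Bor_G$ is standard; the observation that the stabiliser is the same at every point of the orbit because $H \cap Z_G$ is central is also correct. Nothing is missing.
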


   (Other coset representatives are possible, of course; we have chosen one which lifts the element $u_{\Kl}$ considered in \cref{prop:region-f-zeta}.)

   \begin{lemma}
    \label{lem:zeta-e-limit}
    Let $w_i \in \cW(\Sigma_{i, p})$ be vectors such that $w_1 \otimes w_2$ is stable under the subgroup of $H(\Zp)$ consisting of elements of the form $\left(\stbt{z}{\star}{}{z},\stbt{z}{\star}{}{z}\right)$, and the Hecke operator $U_p$ acts on $w_i$ with eigenvalue $\alpha_i$.

    Then, for any $\mathfrak{z}_p \in \Hom(\cW(\Pi_p) \otimes \cW(\Sigma_{1, p}) \otimes \cW(\Sigma_{2,p}), \CC)$, the values
    \[
     \left(\frac{p^{(3k_1 + 3k_2 + c_1 + c_2 - 2)/2}}{\alpha^2\beta \alpha_1 \alpha_2}\right)^n \mathfrak{z}_p\Bigg(
      u_\Bor J \diag(p^{3n}, p^{2n}, p^n, 1)  w_{\alpha\beta}^{\Iw},
      \stbt{p^n}{}{}{1} w_1,
      \stbt{p^n}{}{}{1} w_2
     \Bigg)
    \]
    stabilise for $n \gg 0$.
   \end{lemma}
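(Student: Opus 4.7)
The plan is to reduce this to the same open-orbit machinery used for the Klingen case earlier in the paper (cf.\ the proposition preceding \cref{prop:region-f-zeta}), applied now at Iwahori level rather than Klingen level. The preceding lemma identifies $u_\Bor$ as a representative of the open $\overline{\Bor}_H$-orbit on the full flag variety $G/\Bor_G$, with stabiliser $H \cap Z_G$; this is exactly the hypothesis under which the general formalism of \cite{loeffler-spherical} produces limits of the type claimed.

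Concretely, I would first show that the normalised sequence
\[
 w_n \coloneqq \left(\tfrac{p^{(3k_1 + 3k_2 - 2k_2 + \cdots)/2}}{(\alpha^2\beta)^n}\right) J \diag(p^{3n}, p^{2n}, p^n, 1) \cdot w_{\alpha\beta}^{\Iw}
\]
is a compatible family of Iwahori-type eigenvectors at deepening levels, in the same spirit as the $w_{\alpha\beta}^{\Kl,\infty}$ construction: the translates by $\diag(p^3,p^2,p,1)$ are, up to a power of $p$, built from double cosets of $\diag(p^2,p,p,1)$ and $\diag(p,p,1,1)$, and the product of the corresponding $U_{1,\Iw}$ and $U_{2,\Iw}$ eigenvalues is $\alpha \cdot (\alpha\beta/p^{k_2-2})$. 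The matching bookkeeping on the $\GL_2$ side uses that $\stbt{p^n}{}{}{1} w_i$ rescaled by $(p^{c_i/2}/\alpha_i)^n$ is a compatible family under the $U_p$-trace. Altogether the joint normalisation assembles exactly into the factor $p^{(3k_1+3k_2+c_1+c_2-2)/2}/(\alpha^2\beta\alpha_1\alpha_2)^n$ stated in the lemma.

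The second step is the open-orbit stabilisation argument: pick $n_0$ large enough that $w_1 \otimes w_2$ is fixed by the principal congruence subgroup $H_1(p^{n_0}) \subset H(\Zp)$, and large enough that the stability hypothesis on the central action by $(\stbt{z}{\star}{}{z},\stbt{z}{\star}{}{z})$ holds. For $n \ge n_0$, one writes the difference between the $n$th and $(n{+}1)$st terms as a Hecke-operator sum, choosing coset representatives for the $U$-action; the open-orbit property of $u_\Bor$ under $\overline{\Bor}_H$ means these representatives can be chosen from conjugates (by $u_\Bor J \diag(p^{3n}, p^{2n}, p^n, 1)$) of the stabiliser of $w_1 \otimes w_2$. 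Using the $H_p$-equivariance of $\mathfrak{z}_p$ and the invariance of $w_1 \otimes w_2$, every term collapses onto a single coset, producing exactly the eigenvalue factor that cancels against the normalisation. Hence consecutive terms coincide for $n \ge n_0$.

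The main technical obstacle is the explicit book-keeping of Hecke normalisations: tracking the half-integer twists from the unitary normalisation of $\Pi_p$ and $\Sigma_{i,p}$, the difference between the integrally-normalised and naive $U$-operators at Iwahori level on $\GSp_4$, and verifying that the combined normalisation really is $p^{(3k_1+3k_2+c_1+c_2-2)/2}/(\alpha^2\beta\alpha_1\alpha_2)$ rather than another candidate differing by a power of $p$. Once the normalisation is pinned down, the open-orbit argument itself is formal and should be an immediate application of \cite{loeffler-spherical}, mirroring the sketch already given for the analogous Klingen statement preceding \cref{prop:region-f-zeta}.
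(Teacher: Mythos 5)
Your proposal follows essentially the same approach as the paper's own (very terse) treatment: the paper's proof consists of reducing to trivial coefficients ($k_1=k_2=3$, $c_1=c_2=2$) and noting that the normalised vectors $\bigl(\tfrac{p^8}{\alpha^2\beta}\bigr)^n J\diag(p^{3n},p^{2n},p^n,1)w_{\alpha\beta}^{\Iw}$ form a compatible family under normalised trace maps, with the open-orbit machinery from \cite{loeffler-spherical} and the preceding lemma on $u_{\Bor}$ left implicit, exactly as in the Klingen-case proposition you cite. One small inaccuracy: the stability of $w_1\otimes w_2$ under $\bigl(\stbt{z}{\star}{}{z},\stbt{z}{\star}{}{z}\bigr)$ is a hypothesis of the lemma rather than something guaranteed by taking $n_0$ large; this subgroup contains the conjugates of the orbit stabiliser $H\cap Z_G$ that actually appear, which is why the hypothesis is what makes the collapse of the Hecke-sum possible.
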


   It suffices to prove the lemma assuming $k_1 = k_2 = 3, c_1 = c_2 = 2$ (since the $k_i$ and $c_i$ appear only in the normalisation of the $U$-operators). In this case, the normalisation factor is $\left(\frac{p^{10}}{\alpha^2\beta\alpha_1\alpha_2}\right)^n$.

   We note that the elements $\left(\frac{p^{8}}{\alpha^2 \beta}\right)^n J \diag(p^{3n}, p^{2n}, p^n, 1)  w_{\alpha\beta}^{\Iw}$ are invariant under a sequence of upper-triangular congruence subgroups, whose intersection is $B_G(\Zp)$; these vectors are compatible under the normalised trace maps, and all are eigenvectors for $U'_1$ and $U'_2$ with eigenvalues $\alpha$ and $\tfrac{\alpha\beta}{p}$ respectively.

   \begin{proposition}
    Suppose $\Sigma_{1, p}$ and $\Sigma_{2, p}$ are unramified, and let $w_i$ be the normalised $\alpha_i$-eigenvectors in the Whittaker model at $\GL_2$ Iwahori level (so $w_i(1) = 1$).

    If $\mathfrak{z}_p$ is the homomorphism with $\mathfrak{z}_p(w_0^{\sph}, w_1^{\sph}, w_2^{\sph}) = 1$, then the sequence of Lemma \ref{lem:zeta-e-limit} is independent of $n \ge 1$, and the common value is given by
    \[ \frac{q^4}{(q^2 - 1)^2} \mathcal{E}_p^{(e)}(\Pi_p \times \Sigma_p(\phi)).\]
    In particular, it is not zero.
   \end{proposition}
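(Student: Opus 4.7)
The plan is to carry out an explicit zeta-integral computation along the lines sketched for region $(f)$ in \cref{prop:region-f-zeta}; the details belong to \cite{LZ20-zeta2}. The underlying strategy, the same as in \cite[\S 20]{LZ20} and \cite[\S 8]{LPSZ1}, is to exploit the fact that $u_\Bor$ represents the unique open $\overline{\Bor}_H$-orbit on the flag variety $G/\Bor_G$, so that the $H$-zeta integral localises onto this cell and reduces to an integration over an abelian group of Iwasawa parameters, in which the integrand factors as a product of explicit Whittaker-function values.

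The first step is to sharpen \cref{lem:zeta-e-limit} by showing that the sequence is actually constant for $n \ge 1$, not merely eventually constant. This is an invariance check: by the remark preceding the proposition, the rescaled element $\left(\tfrac{p^{8}}{\alpha^{2}\beta}\right)^{n} J \diag(p^{3n},p^{2n},p^{n},1)\, w_{\alpha\beta}^{\Iw}$ is compatible under the normalised $U'_1$-- and $U'_2$--trace maps, and likewise $\alpha_i^{-n}\stbt{p^n}{}{}{1} w_i$ is compatible under the normalised $U_p'$-trace maps at $\GL_2$ Iwahori level. For $n=1$ all three vectors are already eigenvectors at the minimal level at which the Borel-ordinary eigenspace first appears in both $\Pi_p$ and the $\Sigma_{i,p}$, so applying the $U'$-operators once more preserves the value of $\mathfrak{z}_p$.

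The second step is to unfold $\mathfrak{z}_p$ as an integral over $(H \cap Z_G)\backslash H(\Qp)$ of the product of three Whittaker functions. Using the Iwasawa decomposition for $H$ together with the open-orbit property of $u_\Bor$, the integrand is supported on a product of compact-open subsets of $\Qp^\times$, and the three Whittaker-function values on this region can be computed explicitly: for the unramified $\Sigma_{i,p}$ by the Casselman--Shalika formula in terms of the Satake parameters $\{\alpha_i,\beta_i\}$, and for the Borel-ordinary vector $J\diag(p^{3n},p^{2n},p^{n},1)\,w_{\alpha\beta}^{\Iw}$ by its principal-series expansion in terms of $\{\alpha,\beta,\gamma,\delta\}$. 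The integral thereby collapses to a finite combination of products of geometric series in these eight Satake parameters.

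The main obstacle lies in the bookkeeping of this last step: one must check that after summing the geometric series, the result factors cleanly as the product of the seven Euler-type factors $(1 - p^w/\xi)$ that constitute $\mathcal{E}_p^{(e)}$ according to the table in \S 3.3, where $\xi$ runs over the Frobenius eigenvalues of $\VV^+(\phi)$, times the overall volume constant $\tfrac{q^{4}}{(q^{2}-1)^{2}}$ arising from the ratio between the Haar measure implicit in the normalisation $\mathfrak{z}_p(w_{0}^{\sph},w_{1}^{\sph},w_{2}^{\sph})=1$ and the measure coming from the Iwasawa parametrisation of the open orbit. The non-vanishing conclusion then follows immediately, since each contributing $\xi$ has complex absolute value $p^{w+1/2}\ne p^{w}$, and the volume constant is manifestly non-zero.
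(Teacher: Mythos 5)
The paper does not prove this proposition at all: the proof given is literally the single line ``See \cite{LZ20-zeta2}'', deferring the computation to a forthcoming companion paper. So there is no in-paper argument to compare against, and your proposal is at most a sketch of what such a proof \emph{should} look like, rather than something that can be checked against a reference.

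At that level, the strategy you outline --- unfolding $\mathfrak{z}_p$ over the open $\overline{\Bor}_H$-orbit on $G/\Bor_G$ represented by $u_\Bor$, reducing to an Iwasawa integral, and evaluating the three Whittaker functions via the Casselman--Shalika formula (for the $\Sigma_{i,p}$) and the Iwahori-level principal-series expansion (for $\Pi_p$) --- is consistent with how the analogous computations are done for region $(f)$ in \cite[\S 8]{LPSZ1} and \cite[\S 20]{LZ20}, and it is the approach one would expect in \cite{LZ20-zeta2}. Two points, however, should be flagged. First, a small but real numerical slip: you say the result should factor as a product of \emph{seven} Euler-type factors $(1-p^w/\xi)$, but $\VV^+(\phi)$ has rank $8$, so $\mathcal{E}_p^{(e)}$ is a product of eight such factors (seven come from the first eight columns of the table in the paper and one from the omitted columns, since exactly one member of each of the eight Frobenius-eigenvalue pairs contributes). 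Getting the count wrong would make the alleged ``clean factorisation'' fail, so this is not cosmetic. Second, the entire ``bookkeeping'' of collapsing the Iwasawa integral into the product of Euler factors is precisely what the deferred companion paper is supposed to supply; your proposal names this as the main obstacle but does not attempt to carry it out, nor does it explain where the prefactor $q^4/(q^2-1)^2$ actually arises beyond asserting it is a measure-normalisation constant. Finally, the $n=1$ stability claim is plausible, but the justification you give (``the vectors are already eigenvectors at the minimal level'') is a heuristic, not a proof: one would need to verify that the relevant congruence subgroup already stabilises $w_1\otimes w_2$ at level $n=1$ and that $u_\Bor J\diag(p^3,p^2,p,1)$ places $w_{\alpha\beta}^{\Iw}$ in the stable range of the trace-compatibility. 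So the proposal is a reasonable outline of the intended approach but is not a proof, and the $7$ vs.\ $8$ factor count needs correcting.
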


   \begin{proof}
    See \cite{loeffler-zeta2}.
   \end{proof}

   For ramified points $\phi$, we must be a little more circumspect. Recall that $\Sigma_{1,p}$ is ordinary, but $\Sigma_{2, p}$ is a twist of an ordinary representation $\Sigma_p(\cG_2, \phi_2)$ by a possibly ramified character $\widehat{\tau}_{\phi, p}^{-1}$ (which coincides with $\tau_{\phi}$ on $\Zp^\times$, and maps $p$ to $1$.)

   We shall define $w_1$ to be the normalised $(U = \alpha_1)$-eigenvector in the $\stbt{\star}{\star}{}1$-invariants of $\Sigma_{1,p}$, and $w_2$ to be the twist by $\widehat{\tau}_{\phi, p}^{-1}$ of the $(U = \alpha_2)$-eigenvector of $\Sigma_p(\cG_2, \phi_2)$ (which is \emph{not} the same as the new vector of the twist $\Sigma_{2,p}$, in contrast to \S\ref{sect:pLf} above). Thus $w_1 \otimes w_2$ is invariant under the subgroup $\left(\stbt{z}{\star}{}{z},\stbt{z}{\star}{}{z}\right)$ of $N_H(\Zp)$, as required for Lemma \ref{lem:zeta-e-limit}, and the normalised $U$-operator $p^{(c_i-2)/2} \sum_{a \bmod p} \stbt{p}{a}{}{1}$ acts on $w_{i}$ as the $p$-adic unit $\alpha_i$. (These conditions in fact determine $w_1 \otimes w_2$ uniquely up to scalars.)

   \begin{remark}
    It should be possible to show that if $\Hom(\cW(\Pi_p) \otimes \cW(\Sigma_{1, p}) \otimes \cW(\Sigma_{2,p}), \CC)$ is non-zero, then the limit of \cref{lem:zeta-e-limit} is non-zero for these vectors $w_i$ (at least if $\phi$ is fully ramified, as in the analogous computation above for region $(f)$). However, we have not attempted to do this.
   \end{remark}

  \subsubsection{Interpolation}

   We now define elements $w_0 \in  \cW(\Pif)$, $w_{i, \phi} \in \cW(\Sigma_{i,\f})$ as the product of the vectors of \cref{lem:zeta-e-limit} at $p$ (for some sufficiently large $n$) and the vectors $\gamma_{i, v}\cdot  w_{i, v}^{\new}$ away from $p$, where $\gamma = (\gamma_0, \gamma_1, \gamma_2)$ is our fixed, arbitrary element of $(G \times \widetilde{H})(\Af^p)$.

  \begin{theorem}
   There exists a class
   \[ \Delta_{\gamma}(\Pi \times \ucG, (e)) \in \wH^1(\QQ, \VV(\Pi \times \ucG); (e)) \]
   with the following property: for every $\phi \in \fX^{(e)}$, we have
   \[ \Delta_{\gamma}(\Pi \times \ucG, (e))(\phi) = \Delta(\Pi, \Sigma)(w_0, w_{1, \phi}, w_{2, \phi}) \]
   where $w_0$ and $w_{i, \phi}$ are the above Whittaker vectors. If $\phi$ is crystalline, we have
   \[ \Delta(\Pi, \Sigma)(w_0, w_{1, \phi}, w_{2, \phi}) = \frac{q^4}{(q^2 - 1)^2} \mathcal{E}_p(\Pi \times \Sigma(\phi))  \times \Delta(\Pi, \Sigma)(w_0^{\sph}, w_{1, \phi}^{\sph}, w_{2, \phi}^{\sph}).\]
  \end{theorem}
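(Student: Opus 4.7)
The plan is to construct the family class as a limit of compatible geometric classes in a tower of Shimura varieties with increasing Iwahori-type level at $p$, and then match this with the recipe from \cref{lem:zeta-e-limit} at each specialisation. Concretely, for each $n\ge 1$, consider the level data $\cK_n = (K_{G,n}, K_1(N_1p^n), K_1(N_2p^n))$, where $K_{G,n} \subset G(\Zp)$ is the upper-triangular congruence subgroup of depth $n$ associated to the $B_G$-Iwahori filtration used in \cref{lem:zeta-e-limit}. Using Ancona's functor together with Torzewski's compatibility, push forward the fundamental motivic class on $Y_H(\cK_n \cap H)$ along the translate of $(\iota, \id)$ by the element
\[ \gamma_n \coloneqq \Big(u_\Bor \cdot J \cdot \diag(p^{3n}, p^{2n}, p^n, 1),\ \stbt{p^n}{}{}{1},\ \stbt{p^n}{}{}{1}\Big), \]
and call the resulting class $\Delta^{[\lambda,\mu]}(\cK_n,\gamma_n) \in H^6_{\mot}(Y(\cK_n), \cV_{G,\lambda}^\vee \otimes \cV_{H,\mu}^\vee(5-w))$; here away from $p$, one also inserts the fixed auxiliary element $\gamma \in (G\times\tilde H)(\Af^p)$.

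Next, apply the étale regulator together with the dual ordinary idempotents $e'_\ord$ on the two $\GL_2$ factors, and multiply by the normalising factor $\left(p^{(3k_1+3k_2+c_1+c_2-2)/2}/(\alpha^2\beta\alpha_1\alpha_2)\right)^n$ coming from \cref{lem:zeta-e-limit}. The standard Hida-theoretic compatibility (the ordinary projection of a $U'$-operator translate equals the ordinary vector times its $U'$-eigenvalue) shows that the renormalised classes are compatible under the trace maps $\cK_{n+1} \to \cK_n$ on the $\GL_2$ factors. On the $\GSp_4$ factor, the vectors $\left(\tfrac{p^8}{\alpha^2\beta}\right)^n J \diag(p^{3n},p^{2n},p^n,1) w_{\alpha\beta}^{\Iw}$ are, as recalled after \cref{lem:zeta-e-limit}, eigenvectors for $U'_1, U'_2$ compatible under the corresponding normalised trace maps, so the class on $Y_G$ at level $K_{G,n}$ is independent of $n$ for $n$ sufficiently large once we project to the $\Pif$-component. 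Taking the inverse limit over $n$ and projecting to the $\Pi^\vee$-component on $G$ and to the maximal quotient where $T'_\ell$ acts via $a_\ell(\cG_i)$ on each $\GL_2$, together with a final cyclotomic twist by $\bc$ to encode the self-dual normalisation, produces an element of $\wH^1(\QQ, \VV_p(\Pi \times \ucG); (e))$, using the Shimura-variety realisation of $V_p(\ucG)^*$ recalled in \S7.2.

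Specialising at $\phi \in \fX^{(e)}$ kills the Iwasawa twist and, via Ohta's control theorem for the ordinary étale cohomology, identifies the ordinary projection with projection onto the $\alpha_i$-eigenline in $\Sigma_{i,p}(\phi)$; combined with the translate by $\gamma_n$ at $p$ and $\gamma$ away from $p$, this gives precisely the Whittaker-vector recipe of \cref{lem:zeta-e-limit}, so the interpolation formula
\[ \Delta_\gamma(\Pi \times \ucG,(e))(\phi) = \Delta(\Pi,\Sigma)(w_0, w_{1,\phi}, w_{2,\phi}) \]
follows. For crystalline $\phi$, the second displayed identity is then immediate from the zeta-integral calculation in the proposition preceding \cref{lem:zeta-e-limit} and the $H(\Af)$-equivariance of $\Delta(\Pi,\Sigma)$: one rewrites the non-spherical triple of local vectors at $p$ as $\mathcal{E}_p^{(e)}(\Pi_p\times\Sigma_p(\phi))\cdot q^4/(q^2-1)^2$ times the spherical triple, and the scalar factors through the modular parametrisation.

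The principal obstacle is verifying that the interpolated class genuinely lies in the Selmer complex $\RGt(\QQ, \Pi \otimes \ucG;(e))$ rather than merely in $H^1(\QQ,\VV_p(\Pi\times\ucG))$; equivalently, one must check that at $p$ the class lands in the image of $H^1(\Qp, \VV^+(\Pi\otimes\ucG,(e)))$. At each crystalline $\phi$ this is provided by the Nekovář–Nizioł comparison \cite{nekovarniziol} combined with \cref{prop:h1fg}, but propagating it to the full family requires a $\Lambda$-adic version -- most conveniently, showing that the inverse-limit class factors through the ordinary filtration on the local cohomology of the tower, so that the local-at-$p$ image manifestly lies in the Panchishkin subrepresentation $\VV^+(\Pi\otimes\ucG,(e))$ by construction. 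The remaining finite primes are handled by the unramifiedness/constancy of the local representations along the family established in the lemma of \S3.4.
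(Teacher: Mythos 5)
Your construction, as written, interpolates only in the ``level at $p$'' (wild/cyclotomic) direction while holding the coefficient weights $(\lambda,\mu)$ -- equivalently the integers $(t_1,t_2)$ -- fixed: the classes $\Delta^{[\lambda,\mu]}(\cK_n,\gamma_n)$ all live on sheaves $\cV^\vee_{G,\lambda}\otimes\cV^\vee_{H,\mu}$ for one fixed $\mu$, and your inverse limit therefore produces, at best, a class that specialises correctly at points $\phi$ whose weight $(c_1,c_2)$ matches that fixed $\mu$. But the statement asserts interpolation over \emph{all} of $\fX^{(e)}$, where $(c_1,c_2)$ varies, and the étale sheaves and the branching maps $H^0_{\mot}(Y_H,\QQ)\to H^0_{\mot}(Y_H,\tilde\iota^*(\cV_{G,\lambda}^\vee\otimes\cV_{H,\mu}^\vee)(2-w))$ vary with $\mu$. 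To promote your fixed-weight families to a single $\II$-adic class one needs to compare, across different $\mu$, the composites of the branching maps with the moment maps from the $\Lambda$-adic ordinary sheaf to each finite-weight coefficient sheaf, and to check that these are compatible. The paper explicitly identifies this as the nontrivial content, noting that it ``follows from a Lie-algebra-theoretic computation analogous to [LSZ17, Lemma 4.4.1 \& Theorem 9.6.4]'' and deferring the details to \cite{LRZ}. Your proof proposal does not engage with this step at all, so it does not establish the theorem in the form stated.

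A secondary remark: the part of your argument you do carry out -- constructing, for each fixed $(t_1,t_2)$, a norm-compatible family over the $p$-power level tower using the open-orbit representative $u_\Bor J$ and the ordinary projectors -- is essentially the first (easier) half of the paper's sketch, phrased in terms of explicit $\gamma_n$-translates rather than the abstract spherical-variety machinery of \cite{loeffler-spherical}. That reformulation is fine, and the way you handle the local condition at $p$ (Nekov\'a\v{r}--Nizio\l\ at each crystalline $\phi$, plus the observation that the $\Lambda$-adic class factors through the ordinary filtration, hence lands in $\VV^+(\Pi\otimes\ucG,(e))$) is also the right idea. But note that explicitly multiplying by the normalising factor $\bigl(p^{(3k_1+3k_2+c_1+c_2-2)/2}/(\alpha^2\beta\alpha_1\alpha_2)\bigr)^n$ \emph{and} applying the ordinary idempotents $e'_{\ord}$ is double-counting: the normalised trace compatibility built into the ordinary projectors already supplies these factors, so you should pick one formulation and stick with it to avoid introducing spurious powers of the $U'$-eigenvalues into the interpolation formula.
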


  The proof of this theorem is an instance of more general results proved in \cite{loeffler-spherical} and its forthcoming sequel paper \cite{LRZ}. We give a brief sketch here. Let us fix a prime-to-$p$ level $\cK$. Using the methods of \cite{loeffler-spherical}, for any given pair $(t_1, t_2)$ satisfying the required inequalities, one can construct an element of $H^1(\QQ, \VV(\Pi \otimes \cG); (e))$ interpolating the classes $\Delta^{[r_1, r_2, t_1, t_2]}(\cK \cap V_r)$ for varying $r$, where $V_r$ is a certain family of level groups at $p$. This construction relies on the fact that the group $H$ acts on the flag variety $(G \times \tilde H) / (\overline{\Bor}_G \times \overline{\Bor}_{\tilde{H}})$ with an open orbit, and the family of interpolating elements depends on a choice of representative of this orbit; we use the element $(u_{\Bor} J, \mathrm{id})$.

  More difficult is the fact that there is a \emph{single} element interpolating the $\Delta^{[r_1, r_2, t_1, t_2]}(\cK \cap V_r)$ for all $(t_1, t_2)$. (In fact we can also vary $(r_1, r_2)$ as well, although we shall not use this fact here.) This follows from a Lie-algebra-theoretic computation analogous to \cite[Lemma 4.4.1 \& Theorem 9.6.4]{LSZ17}; the details will be given in \cite{LRZ}.

\section{Conjectures III: Explicit reciprocity laws}

 We now choose a pair $(\Pi, \ucG)$ satisfying our usual assumptions, and \emph{two} regions $\spadesuit$, $\heartsuit$, which we suppose to be adjacent (sharing an edge) in \cref{fig:GGP}. This implies that the global root number is $+1$ in one of the regions and $-1$ in the other. Our final conjecture will be an explicit reciprocity law, relating the $p$-adic $L$-function for the $+1$ region to the Galois cohomology class for the $-1$ region.

 \subsection{Ordinary filtrations}

  We first analyse the Galois representation $V_p(\Pi)$. Recall that this has non-negative Hodge numbers $\{0, \dots, k_1 + k_2 - 3\}$. Our conjectures will be vacuous unless $k_2 \ge 3$, so we shall assume this; hence the Hodge numbers of $\Pi$ are distinct. Let these be $n_0 = 0, \dots, n_3 = k_1 + k_2 - 3$.

  We shall also suppose that $\Pi$ is ordinary at $p$. Then $V_p(\Pi)$ has a filtration by subrepresentations $V_p(\Pi) = \Fil^0 V_p(\Pi) \supset \Fil^1 V_p(\Pi) \dots$, with graded pieces $\Gr^i = \Fil^i / \Fil^{i + 1}$ of dimension 1 for $i\in \{0,1,2,3\}$. We refer to the ensuing filtration on $\DdR(V_p(\Pi))$ as the \emph{Newton filtration}.

  The Newton filtration is ``opposite'' to the Hodge filtration on $\DdR(V_p(\Pi))$, in the sense that $\Fil^{i}_{\cN} \cap \Fil^j_{\cH}$ has dimension 1 if $j = n_{3-i}$ and is zero if $j > n_{3-i}$. Hence we have isomorphisms
  \[ \Gr^{n_{(3-i)}}_{\cH} \cong \Fil^{i}_{\cN} \cap \Fil^{n_{(3-i)}}_{\cH} \cong \Gr^{i}_{\cN}.\]

  However, $\Gr^{n_{(3-i)}}_{\cH} \DdR(V_p(\Pi))$ is isomorphic to the newvectors of the $\Pi$-part of coherent $H^i$ of the compactified Shimura variety, with a suitable coefficient system; we call this space $H^i_{\mathrm{coh}}(\Pi)^{\new}$. So we conclude that there is an isomorphism $\DdR(\Gr^i V_p(\Pi)) \cong H^i_{\mathrm{coh}}(\Pi)^{\new}$, or dually that there is a canonical pairing
  \[
   \DdR(\Gr^i V_p(\Pi^\vee)) \times H^{3-i}_{\mathrm{coh}}(\Pi)^{\new} \to \QQbar_p.
  \]

  Similarly, we have 2-step filtrations on the 2-dimensional representations $V_p(\Sigma_i(\phi))$ for each $\phi \in \fX^{\cl}$, and we obtain pairings
  \[
   \DdR(\Gr^j V_p(\Sigma_i(\phi)^\vee)) \times H^{1-j}_{\mathrm{coh}}(\Pi)^{\new} \to \QQbar_p.
  \]

 \subsection{Local subquotients at $p$ and regulators}

  \begin{proposition}
   The quotient
   \[ \VV(\heartsuit / \spadesuit) \coloneqq \frac{\VV^+(\Pi \times \ucG, \heartsuit)}{\VV^+(\Pi \times \ucG, \heartsuit) \cap \VV^+(\Pi \times \ucG, \spadesuit)}
   \]
   has rank 1. Moreover, the classical specialisations of this quotient have Hodge--Tate weight $\ge 1$ in region $\fX^{\heartsuit}$, and $\le 0$ in region $\fX^{\spadesuit}$. Up to twists, we have
   \[ \VV(\heartsuit / \spadesuit) = \Gr^{s_0} V_p(\pi) \otimes \Gr^{s_1} V_p(\cG_1) \otimes \Gr^{s_2} V_p(\cG_2) \]
   for some integers $(s_0, s_1, s_2) \in \{0,\dots,3\} \times \{0,1\} \times \{0, 1\}$.
  \end{proposition}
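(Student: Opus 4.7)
The plan is to identify both $\VV^+(\Pi \times \ucG, \spadesuit)$ and $\VV^+(\Pi \times \ucG, \heartsuit)$ as $\II$-submodules built out of a common collection of rank-one subquotients of $\VV_p(\Pi \times \ucG)$, indexed by triples $(a,b,c)$, and then to check combinatorially that adjacent regions in \cref{fig:GGP} differ by exactly one such triple.

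First I would set up the tensor-product filtration. Assuming Borel-ordinarity of $\Pi_p$ (which subsumes the parabolic ordinarities needed for both regions), the results recalled in \S\ref{sect:gsp4gal} give $V_p(\Pi)$ a complete $G_{\Qp}$-stable flag with rank-one graded pieces $\Gr^a V_p(\Pi)$, $a \in \{0,1,2,3\}$, and Ohta's theorem gives each $V_p(\cG_i)$ a $G_{\Qp}$-stable rank-one submodule $\Fil^1 V_p(\cG_i)$ with rank-one quotient $\Gr^0 V_p(\cG_i)$. Tensoring, the module $V_p(\Pi) \otimes V_p(\cG_1) \otimes V_p(\cG_2)$ inherits a filtration by free $\II$-submodules whose sixteen composition factors are the free rank-one $\II$-modules
\[ T_{a,b,c} \coloneqq \Gr^a V_p(\Pi) \otimes \Gr^b V_p(\cG_1) \otimes \Gr^c V_p(\cG_2), \qquad (a,b,c) \in \{0,1,2,3\} \times \{0,1\}^2. \]
At any classical $\phi$, the Hodge--Tate weight $N(a,b,c;\phi)$ of $T_{a,b,c}$ inside $\VV(\phi)$ is an explicit affine function of $(k_1,k_2,c_1,c_2)$ obtained from the known weights on the factors shifted by the twist $1+w$. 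The self-duality $\VV(\phi) \cong \VV(\phi)^*(1)$ pairs $(a,b,c)$ with $(3-a,1-b,1-c)$ into orbits whose weights sum to $1$, so in each region exactly eight cells satisfy the Panchishkin inequality $N(\,\cdot\,;\phi) \ge 1$; denote this set $S(\spadesuit)$.

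Second, I would identify $\VV^+(\Pi\times \ucG,\spadesuit)$ as the unique rank-$8$ $G_{\Qp}$-stable $\II$-submodule of $\VV_p(\Pi \times \ucG)$ whose Jordan--H\"older constituents are $\{T_{a,b,c}: (a,b,c) \in S(\spadesuit)\}$. Existence is immediate from the preceding proposition; uniqueness follows because on a Zariski-dense open sublocus of each region the sixteen $N(a,b,c;\phi)$ are pairwise distinct, so $G_{\Qp}$-stable subrepresentations of each specialisation correspond bijectively to downsets in the Hodge--Tate ordering, and the $\II$-module is then pinned down by generic flatness. An edge of \cref{fig:GGP} between $\spadesuit$ and $\heartsuit$ is cut out by exactly one linear equation $N(a,b,c;\phi) = \tfrac12$; crossing it flips the sign of that inequality, and by self-duality also flips the paired inequality, so $S(\heartsuit)$ and $S(\spadesuit)$ differ in precisely two places: a triple $(s_0,s_1,s_2) \in S(\heartsuit) \setminus S(\spadesuit)$ and its partner $(3-s_0,1-s_1,1-s_2) \in S(\spadesuit) \setminus S(\heartsuit)$. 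Hence $|S(\spadesuit) \cap S(\heartsuit)| = 7$, the intersection $\VV^+(\heartsuit)\cap \VV^+(\spadesuit)$ has rank $7$, and the quotient $\VV(\heartsuit/\spadesuit)$ is a rank-one $\II$-module with single composition factor $T_{s_0,s_1,s_2}$ (up to the $\II$-adic interpolation of the twist $1+w$). The Hodge--Tate claim then follows from $N(s_0,s_1,s_2;\phi) \ge 1$ on $\fX^\heartsuit$ and $\le 0$ on $\fX^\spadesuit$ by construction.

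The principal obstacle will be the region-by-region execution of the ``single swap'' check: one must explicitly write out the eight-element set $S(\spadesuit)$ for each of the ten regions (which can be read off from the table of parabolic subgroups $P^\spadesuit$ in \cref{rem:parabolics} together with the valuations of the Hecke parameters of $\Pi_p$ and the $\Sigma_{i,p}$) and then inspect each neighbouring pair in \cref{fig:GGP}. This is finite but unwieldy. A secondary subtlety is the behaviour along codimension-one walls where two weights $N(a,b,c;\phi)$ coincide; on such loci the cell decomposition fails to split uniquely, and one must define $\VV^+$ by $\II$-adic closure from the generic locus of distinct weights and appeal to flatness to transfer the rank computation across the wall.
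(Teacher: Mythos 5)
Your proof is correct and follows the same strategy as the paper's, which simply records the argument as a two-sentence sketch: decompose $\VV_p(\Pi\times\ucG)$ into the sixteen rank-one Jordan--H\"older constituents $\Gr^a V_p(\Pi)\otimes\Gr^b V_p(\cG_1)\otimes\Gr^c V_p(\cG_2)$ coming from the complete flags on each factor, and observe that $\VV^+(\spadesuit)$ and $\VV^+(\heartsuit)$ correspond to different eight-element subsets of these. Your write-up usefully fleshes out what the paper calls a ``straightforward combinatorial exercise'': the self-duality pairing $(a,b,c)\leftrightarrow(3-a,1-b,1-c)$ guaranteeing $|S(\spadesuit)|=8$ for every region, the verification (readable off the paper's table of Frobenius eigenvalues) that crossing any edge of \cref{fig:GGP} swaps exactly one dual pair, and the resulting rank count $|S(\heartsuit)\setminus S(\spadesuit)|=1$. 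You also correctly flag, and handle by generic flatness, the codimension-one subtlety at weight collisions -- a point the paper passes over silently.
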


  \begin{proof}
   This is a straightforward combinatorial exercise. Since $\VV$ is the product of three representations $\VV_0 \otimes \VV_1 \otimes \VV_2$, each of which has a complete flag of $\Gal(\QQbar_p / \Qp)$-invariant submodules, we obtain a complete description of the Jordan--H\"older constituents of $\VV$ as tensor products of graded pieces of the individual factors. The submodules $\VV^+(\Pi \times \ucG, \spadesuit)$ and $\VV^+(\Pi \times \ucG, \heartsuit)$ correspond to different subsets of these graded pieces.
  \end{proof}

  \begin{example}
   The crucial examples for what follows are:
   \begin{itemize}
    \item $\heartsuit = (e)$, $\spadesuit = (f)$. One computes that $\VV(e/f) = \operatorname{Gr}^{1} \otimes \operatorname{Gr}^{1} \otimes \operatorname{Gr}^{1}$. Hence we should expect to pair with a class in coherent $H^2 \otimes H^0 \otimes H^0$.
    \item $\heartsuit = (e)$, $\spadesuit = (c)$. We have $\VV(e/c) = \operatorname{Gr}^{3} \otimes \operatorname{Gr}^{0} \otimes \operatorname{Gr}^{0}$.
    \end{itemize}
     in the former case, and with a class in coherent $H^0 \otimes H^1 \otimes H^1$ in the latter.
  \end{example}

  \begin{remark}
   Note that interchanging $\spadesuit$ and $\heartsuit$ corresponds to replacing $s = (s_0, s_1, s_2)$ with $s' = (3-s_0, 1-s_1, 1-s_2)$.
  \end{remark}

  We define
  \[ \mathbf{D}(\heartsuit / \spadesuit) \coloneqq \left(\VV(\heartsuit / \spadesuit) \mathop{\hat\otimes} \widehat{\QQ}_p^{\mathrm{nr}}\right)^{\Gal(\QQbar_p / \Qp(\mu_{p^\infty}))}.\]
  This is (non-canonically) isomorphic to $\VV(\heartsuit/\spadesuit)$ as an $\II$-module, so it is free of rank 1. (Alternatively -- less canonically, but slightly more concretely -- we can define it as $\Dcris(\Qp, \VV(\heartsuit / \spadesuit)(\kappa^{-1}))$, where $\kappa$ is the unique $\II^\times$-valued character of $\Gal(\Qp(\mu_{p^\infty}) / \Qp) \cong \Zp^\times$ such that $\VV(\heartsuit / \spadesuit)(\kappa^{-1})$ is unramified.) The theory of Perrin-Riou regulator maps gives a canonical homomorphism of $\II$-modules
  \[
  \cL^{\mathrm{PR}}_{\heartsuit/\spadesuit}: H^1(\Qp, \VV(\heartsuit / \spadesuit)) \to \mathbf{D}(\heartsuit / \spadesuit), \]
  which is related to the Bloch--Kato logarithm for classical specialisations where the Hodge--Tate weight is $\ge 1$ (including those in $\fX^{\heartsuit}$), and to the dual exponential where the Hodge--Tate weight is $\le 0$ (including $\fX^{\spadesuit}$).

 \subsection{Eichler--Shimura isomorphisms}

  For each $\phi \in \fX^{\cl}$, the fibre $\mathbf{D}(\heartsuit / \spadesuit)_{\phi}$ is equal to \[ \Gr^s_{\cH} \DdR(\VV(\phi)) \cong \left(\Gr^{s_0}_{\cH} \DdR(V_p(\Pi)\right) \otimes \left(\Gr^{s_1}_{\cH} \DdR(V_p(\Sigma_1(\phi))\right) \otimes \left(\Gr^{s_2}_{\cH} \DdR(V_p(\Sigma_2(\phi))\right)\]
  where $s = (s_0, s_1, s_2)$ is the triple of integers associated to $\heartsuit$ and $\spadesuit$ as above. As we have seen, this is dual to $H^{s_0'}(\Pi)^{\new} \otimes H^{s_1'}(\Sigma_1(\phi)) \otimes H^{s_2'}(\Sigma_2(\phi))^{\new}$, where $s' = (3, 1,1) - s$.

  The spaces $H^{0}(\Sigma_i(\phi))$ both have canonical bases, given by the newforms $g_i(\phi)$. Using Serre duality, we also obtain bases of the corresponding $H^1$ using the linear functionals dual to the conjugate forms $\overline{g_i(\phi)}$; see \cite{KLZ20} for further details.

  If we fix an arbitrary basis $\nu$ of $H^{s_0'}(\Pi)^{\new}$ (as we already did for $s_0' = 2$ above), then we obtain a canonical ``evaluation'' map
  \[ \operatorname{ev}_{\phi}: \mathbf{D}(\heartsuit / \spadesuit)_{\phi} \to \QQbar. \]

  \begin{proposition}
   There exists a map
   \[ \operatorname{EV}: \mathbf{D}(\heartsuit / \spadesuit) \to \operatorname{Frac} \II \]
   interpolating the $ev_{\phi}$ for $\phi \in \fX^{\cl}$.
  \end{proposition}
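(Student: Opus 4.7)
The plan is to construct a canonical $\II$-linear trivialization of $\mathbf{D}(\heartsuit/\spadesuit)$, or more precisely an $\II$-linear form on it, by producing an interpolating section in a suitable ``dual'' family of coherent cohomology groups, and then pairing against it. Since $\Pi$ is fixed throughout, the $\GSp_4$-factor contributes only a scalar ambiguity absorbed into the choice of $\nu$; the real content is the $\GL_2$-factor interpolation.

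First I would record that the formation of $\mathbf{D}(\heartsuit/\spadesuit)$ factors as a tensor product over $\II$ of analogous rank-one modules for each of the three factors: a fixed rank-one $\QQbar_p$-space $\mathbf{D}^{\Pi}_{s_0}$ coming from $\Gr^{s_0}_{\cH}\DdR(V_p(\Pi))$ (trivialized by $\nu$, or more precisely by its Serre-dual partner in $H^{s_0'}(\Pi)^{\new}$), and two rank-one $\II_i$-modules $\mathbf{D}(\cG_i,s_i)$ interpolating $\Gr^{s_i}_{\cH}\DdR(V_p(\Sigma_i(\phi)))$. The Panchishkin/ordinary structure, together with Ohta's theorem on the existence of a rank-one $\II$-subsheaf $\Fil^1 V_p(\cG_i)$, guarantees that each $\mathbf{D}(\cG_i,s_i)$ is free of rank one over $\II_i$.

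Next, I would produce an $\II_i$-adic basis of the functional $\operatorname{ev}$ on each $\mathbf{D}(\cG_i,s_i)$. In the case $s_i=1$ the graded piece pairs with a coherent $H^0$-class, i.e.\ with a newform; here one uses the Hida family $\cG_i$ itself, viewed as an $\II_i[\mu_N]$-adic cusp form as recalled in the discussion of $p$-adic Hida families, and the Ohta/Fukaya--Kato $\II$-adic Eichler--Shimura isomorphism, whose specialization at a classical point $\phi_i$ recovers (up to an explicit Euler factor and unit scalar) the pairing with $g_{\phi_i}$. In the dual case $s_i=0$, one needs the dual basis, corresponding to the conjugate form $\overline{g_i(\phi)}$; this is produced by the same $\II$-adic Eichler--Shimura machinery combined with the $\II$-adic Poincar\'e/Serre duality on the ordinary part of $H^1$, as developed by Ohta and used systematically in \cite{KLZ17, KLZ20}. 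The twist by $\widehat{\tau}_\phi^{-1}$ built into $\Sigma_2(\ucG,\phi)$ is precisely the normalization required for the resulting sections to glue across the whole of $\fX^{\cl}(\ucG)$.

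Finally, one defines $\operatorname{EV}$ as the tensor product of these three trivializing functionals. By construction, at every classical $\phi$ the specialization recovers $\operatorname{ev}_\phi$ up to the canonical matching of graded pieces with coherent cohomology, so it interpolates the $\operatorname{ev}_\phi$ as required. The main obstacle, and the reason one must allow values in $\operatorname{Frac}\II$ rather than $\II$, is the usual issue with Hida's control theorem and Ohta's Eichler--Shimura comparison: these maps become isomorphisms only after inverting the congruence ideals of the families $\cG_i$, and hence the resulting $\operatorname{EV}$ is well-defined over the total ring of fractions but may acquire poles along the congruence divisor. A subsidiary technical point, to be checked with some care, is that the normalizations of the $\II$-adic sections on the $\cG_1$-side and the $\cG_2$-side combine with the fixed choice of $\nu$ so that the specialization at $\phi$ matches $\operatorname{ev}_\phi$ on the nose; this is a matter of tracking the Serre-duality pairings through the Eichler--Shimura isomorphism, and is exactly the kind of comparison carried out in \cite[\S 10]{KLZ20} in the $\GL_2 \times \GL_2$ setting.
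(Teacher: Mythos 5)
The paper states this proposition without any proof at all, so there is nothing to compare your argument against line-by-line; the authors simply invoke it as a known consequence of the standard $\II$-adic Eichler--Shimura machinery. Your sketch fills in that gap in exactly the way one would expect: factor $\mathbf{D}(\heartsuit/\spadesuit)$ into a tensor product of the fixed rank-one $\Pi$-factor (trivialized by the chosen $\nu$) and two rank-one $\II_i$-modules coming from Ohta's theorem on $\Fil^1 V_p(\cG_i)$; produce $\II_i$-adic trivializations of the latter using the Hida family regarded as an $\II_i[\mu_N]$-adic cusp form, Ohta's Eichler--Shimura isomorphism, and $\II$-adic Poincar\'e/Serre duality for the $s_i=0$ (coherent $H^1$) case; tensor; and observe that the resulting functional lands in $\operatorname{Frac}\II$ rather than $\II$ because the Eichler--Shimura comparison and control theorem require inverting the congruence ideals. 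Your remark that the twist $\widehat{\tau}_\phi^{-1}$ baked into the normalization of $\Sigma_2(\ucG,\phi)$ is what makes the section glue across all of $\fX^{\cl}(\ucG)$ is exactly the subtle point worth flagging.

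One small technical caveat you should acknowledge more explicitly: the paper's running hypotheses (\cref{sit:pairGalrep}(iii)) require the $\cG_i$ to be residually non-Eisenstein and $p$-distinguished precisely so that $V_p(\cG_i)^*$ is free of rank two over $\II_i$ and Ohta's filtration exists integrally; without naming this, the assertion that $\mathbf{D}(\cG_i,s_i)$ is free of rank one is not automatic. Also, your claim that ``the real content is the $\GL_2$-factor interpolation'' is correct in spirit, but one should note that the identification of $\Gr^{s_0}_{\cN}\DdR(V_p(\Pi))$ with $H^{s_0'}_{\mathrm{coh}}(\Pi)^{\new}$ used to define $\operatorname{ev}_\phi$ in the first place (via the opposedness of the Newton and Hodge filtrations) is itself nontrivial input, relying on Borel-ordinarity of $\Pi_p$ — but since $\Pi$ is fixed and does not vary in the family, this is a one-time choice of trivialization rather than an interpolation issue, as you correctly indicate. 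Modulo these clarifications your sketch is correct and is the argument the authors clearly have in mind, with \cite[\S 10]{KLZ20} and the Ohta references given there being the right place to extract the precise statements.
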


 \subsection{The conjecture}

  We shall now assume, without loss of generality, that the global root number is $-1$ in region $\heartsuit$. Hence it is $+1$ in region $\spadesuit$.

  By the construction of the Selmer complex appearing in \cref{conj:arithpGGP}, the localisation map
  \[ \wH^1(\QQ, \VV; \heartsuit) \to H^1(\Qp, \VV) \]
  factors canonically through $H^1(\Qp, \VV^+(\heartsuit))$. We can therefore consider the composite
  \begin{equation}
   \label{evmaps}
   \begin{aligned}
   \wH^1(\QQ, \VV; \heartsuit) &\to H^1(\Qp, \VV^+(\heartsuit)) \\
   &\to H^1(\Qp, \VV(\heartsuit / \spadesuit))\\
   &\xrightarrow{\ \cL^{\mathrm{PR}}_{\heartsuit/\spadesuit}\ }\mathbf{D}(\heartsuit / \spadesuit) \\
   &\xrightarrow{\operatorname{EV}} \operatorname{Frac} \II.
  \end{aligned}
  \end{equation}

  \begin{conjecture}[Explicit reciprocity law]
  \label{conj:explrecip}
   The chain of maps \eqref{evmaps} sends the cohomology class $\Delta(\Pi \times \ucG, \heartsuit)$ of Conjecture \ref{conj:arithpGGP} to the $p$-adic $L$-function $\cL_p(\Pi \times \ucG, \spadesuit)$ of Conjecture \ref{conj:pGGP}.
  \end{conjecture}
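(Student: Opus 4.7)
The broad strategy follows the template established by Darmon--Rotger for $\GL_2 \times \GL_2 \times \GL_2$ and by the authors with Kings for Beilinson--Flach elements: reduce to a Zariski-dense set of classical crystalline specialisations, identify the Perrin-Riou regulator there with the Bloch--Kato dual exponential, and then compute the de Rham realisation of the geometric class as a period integral that matches, up to the explicit Euler factor and Archimedean period, the defining interpolation property of the $p$-adic $L$-function. Concretely, since $\operatorname{Frac} \II$ is an integral domain and both sides of the claimed identity lie in it, it suffices to prove equality after specialising at every $\phi \in \fX^{\spadesuit, \cris}$; such points are Zariski dense in $\operatorname{Spec} \II$, once one knows the $p$-adic $L$-function exists there.

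At such a $\phi$, the localisation at $p$ of $\Delta(\Pi \times \ucG, \heartsuit)$ factors through $H^1(\Qp, \VV^+(\heartsuit))$ by construction of the Selmer complex, and then projects to $H^1(\Qp, \VV(\heartsuit / \spadesuit))$. Because $\VV(\heartsuit / \spadesuit)$ has Hodge--Tate weight $\le 0$ at $\phi \in \fX^\spadesuit$, the specialisation of $\cL^{\mathrm{PR}}_{\heartsuit/\spadesuit}$ is, up to an explicit Euler factor $\cE_p^\spadesuit$ coming from the Panchishkin complex, the Bloch--Kato dual exponential. Composing with $\operatorname{EV}$ and the pairing against $\nu$ and the classical newforms $g_{i,\phi}$, one thereby identifies the output of the map \eqref{evmaps} with a cup product in coherent cohomology of $Y_G \times Y_{\GL_2} \times Y_{\GL_2}$. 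The next step is to compute the de Rham realisation of the motivic cycle $\Delta^{[r_1,r_2,t_1,t_2]}(\cK)$: via Ancona's functor and the closed immersion $Y_H \hookrightarrow Y_G \times Y_{\GL_2} \times Y_{\GL_2}$, this realisation is the pullback-and-integration map from coherent cohomology of the triple product to $H^0$ of $Y_H$. Pairing with the chosen test classes and applying Eichler--Shimura to the $\GSp_4$ and $\GL_2$ factors yields precisely a Gross--Prasad period $\cP(\varphi \times \sigma)$, with $\Omega_\infty^W(\Pi,\nu)$ appearing as the Eichler--Shimura comparison factor for the basis $\nu$ of $H^{s'_0}(\Pi)^{\new}$.

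The main obstacle is the last identification, where the local components at $p$ of the resulting test vectors must be matched with those used to build $\cL_p(\Pi \times \ucG, \spadesuit)$. For $(\heartsuit, \spadesuit) = (e, f)$, the cycle side produces a limit of Iwahori-level eigenvectors via the element $u_\Bor$ of \cref{sect:famGale}, whereas the $p$-adic $L$-function of \cref{sect:pLf} is built from Klingen-level eigenvectors attached to $u_\Kl$; bridging the two requires a delicate zeta-integral calculation, together with the compatibility of the family of cycles along the tower of level groups $V_r$, which will be carried out in \cite{LRZ}. The parallel problem for the pair $(e, c)$ is announced in forthcoming work of Bertolini--Seveso--Venerucci; for the adjacent pairs $(e,d)$ and $(e,d')$, the $p$-adic $L$-function itself has yet to be constructed, so even a precise formulation of the reciprocity law is out of reach at present, and in the remaining regions the absence of candidate cohomology classes is a more fundamental obstruction than the reciprocity law computation itself.
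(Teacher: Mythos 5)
Your proposal sketches the familiar Darmon--Rotger / Kings--Loeffler--Zerbes template, and parts of it (e.g.\ the dependence on zeta-integral matching at $p$, the role of \cite{LRZ}, and the catalogue of open cases in your last paragraph) align with what the paper actually says. But your central reduction step contains a genuine error, and as a result the strategy would not work even for the one pair $(\heartsuit,\spadesuit)=(e,f)$ where the paper achieves a partial result.

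You claim it suffices to compare specialisations at $\phi\in\fX^{\spadesuit,\cris}$ because ``such points are Zariski dense.'' This fails twice over. First, for $\spadesuit=(f)$ the region is \emph{bounded} in the $(c_1,c_2)$-plane (it is the wedge $c_1+c_2\le k_1-k_2$), and crystalline points are further constrained to a fixed residue class of $c_1+c_2$ modulo $2p-2$; so $\fX_f^{\cris}$ is a finite set, no more dense than $\fX_e^{\cris}$. The paper makes exactly this observation for region $(e)$, and it applies equally to $(f)$. Second, even if density held, your next step invokes ``the de Rham realisation of the motivic cycle $\Delta^{[r_1,r_2,t_1,t_2]}(\cK)$'' at such a $\phi$ --- but that cycle class only exists when the weights lie in region $(e)$ (that is precisely where the branching space $\Hom_H(V_\lambda^G\otimes V_\mu^H\otimes\det^{2-w},\mathrm{triv})$ is non-zero). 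At a point of $\fX^{(f)}$ there is no motivic input, only the abstract specialisation of the $\II$-adic Galois cohomology class, whose relation to geometry is unknown. You have silently conflated the region where the Perrin-Riou map is a dual exponential ($\spadesuit$) with the region where the cycle is geometric ($\heartsuit$).

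The paper's actual partial theorem goes the other way: it specialises at $\phi\in\fX_e^{\cris}$, uses the Bloch--Kato \emph{logarithm} (not the dual exponential) since the Hodge--Tate weight of $\VV(\heartsuit/\spadesuit)$ is $\ge 1$ there, computes the syntomic Abel--Jacobi image of the geometric cycle, and compares with the value of the higher-Hida-theoretic $p$-adic $L$-function at $\phi$, which by construction is itself a coherent cup product even though $\phi$ lies outside the interpolation region $(f)$. This yields agreement only on the finite set $\fX_e^{\cris}$, which the authors explicitly say ``falls short of a proof'' of the conjecture --- in $\cite{LZ20}$ they escaped this trap by also deforming the weight of $\Pi$, but that route (via Shalika-model $p$-adic $L$-functions for $\GL(4)$) is unavailable for cuspidal $\cG_i$. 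So there is no known path from the finite-set equality to the full conjecture, and your proposed density argument does not supply one.
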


 \section{Constructions III: The explicit reciprocity law for regions $(e)$, $(f)$}

  We now place ourselves in \cref{sit:pairGalrep}, and we suppose also that $(\Pi, \ucG)$ is split and that $k_2 \ge 4$, so the hypotheses of both \cref{sect:pLf} and \cref{sect:famGale} are satisfied.

  The constructions above give us candidates for both $\Delta(\Pi \times \ucG, (e))$ and $\cL_p(\Pi \times \ucG, (f))$, depending on certain additional data:
  \begin{itemize}
  \item both depend on a choice of $\gamma_S \in (\GSp_4 \times \GL_2 \times \GL_2)(\QQ_S)$, where $S$ is the set of bad primes $\ell \ne p$;
  \item $\cL_p(\Pi \times \ucG, \spadesuit)$ also depends on the choice of a basis $\nu$ of $H^2(\Pi)^{\new}$.
  \end{itemize}

  \begin{theorem}
   Suppose that the same $\gamma_S$ is used in both constructions, and the $\nu$ in the definition of the $p$-adic $L$-function agrees with that used to define $\operatorname{EV}$. Then the two sides of \cref{conj:explrecip} for $\heartsuit = (e)$, $\spadesuit = (f)$ agree at all points $\phi \in \fX_{e}^{\cris}$.
  \end{theorem}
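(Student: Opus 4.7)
The plan is to check the identity pointwise at each $\phi \in \fX_e^{\cris}$. First, I would unwind the construction of Section \ref{sect:pLf} to express $\cL_{p,\nu,\gamma}(\Pi \times \ucG, (f))(\phi)$ as a coherent cup-product: namely $\Omega^W_{\infty}(\Pi,\nu)^{-1}$ times the pairing on $Y_H$ of a Klingen-ordinary class in $H^2_{\mathrm{coh}}(\Pi)^{\new}$ against the pullback along $\iota$ of $\sigma_1^{[p]} \boxtimes \theta^{r-\bc}\sigma_2^{[p]}$. Since $\phi$ lies in region $(e)$ the exponent $r-\bc$ now specialises to a (possibly negative) integer, so $\theta^{r-\bc}$ must be interpreted $p$-adically via the unit-root splitting of the Hodge filtration rather than as the classical Maass--Shimura operator; the formal cup-product expression itself, however, is unchanged from its counterpart in region $(f)$.

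Next I would analyse the regulator side. At a crystalline $\phi \in \fX^{(e)}$ the rank-one subquotient $\VV(e/f)_\phi$ has positive Hodge--Tate weight, so the Perrin-Riou regulator $\cL^{\mathrm{PR}}_{(e)/(f)}$ reduces to the Bloch--Kato logarithm. Using $\VV(e/f) \cong \Gr^1 V_p(\Pi) \otimes \Gr^1 V_p(\cG_1) \otimes \Gr^1 V_p(\cG_2)$ together with the Hodge--Tate comparison for the three Shimura varieties involved, the target $\Dcris(\VV(e/f)_\phi)$ becomes canonically dual to $H^2_{\mathrm{coh}}(\Pi)^{\new} \otimes H^0_{\mathrm{coh}}(\Sigma_1)^{\new} \otimes H^0_{\mathrm{coh}}(\Sigma_2)^{\new}$, and $\operatorname{EV}$ becomes evaluation at $\nu \otimes g_1 \otimes g_2$. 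Invoking the Nekov\'a\v{r}--Nizio\l{} comparison between \'etale and syntomic Selmer theory, the Bloch--Kato logarithm of the diagonal-cycle class in the $\Pi \times \Sigma$-isotypical component should then be expressible as precisely the same kind of coherent cup-product: integrating over $Y_H$ the pullback along $\iota$ of a triple product of coherent classes attached to $\Pi$, $\Sigma_1$ and $\Sigma_2$.

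Having expressed both sides as coherent cup-products of the same shape, the remaining task is to match the chosen local inputs. The Archimedean and tame-local data agree tautologically, since both constructions use the same $\gamma_S$ and the same newvectors at places $v \notin \{p,\infty\}$. At $p$, the cycle-side construction uses the Borel-ordinary Iwahori-level vector $u_\Bor J \cdot w_{\alpha\beta}^{\Iw}$ for $\Pi_p$ paired with $\GL_2$ ordinary vectors translated by $\stbt{p^n}{}{}{1}$, whereas the $L$-function side uses the Klingen-level vector $u_{\Kl} \cdot w_{\alpha\beta}^{\Kl,\infty}$ paired with the $p$-depleted vectors $w_i^{\dep}$. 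Since $u_{\Kl}$ and $u_\Bor$ have the same first column they represent the same coset in $G/P_{\Kl}$, and so the reconciliation reduces to passing from Iwahori to Klingen level via the normalised trace, and then to a local $\GL_2$ zeta-integral computation relating $\stbt{p^n}{}{}{1} w_i^{\new}$ to $w_i^{\dep}$; the resulting adjustment will account for the ratio $\cE_p^{(e)}/\cE_p^{(f)}$ of local Euler factors together with the combinatorial constants arising from \cref{prop:region-f-zeta} and \cref{lem:zeta-e-limit}.

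The principal obstacle will be the syntomic step. Showing that the \'etale cycle-class map of the $(r_1,r_2,t_1,t_2)$-weighted motivic diagonal class on the triple Shimura variety is explicitly represented by the expected coherent cup-product requires extending the Besser-style syntomic-coherent comparison of \cite{LSZ17, LZ20} from the Eisenstein-input case to cusp-form input in two $\GL_2$ variables; this is the main technical content of the sequel paper. The restriction to crystalline points is essential precisely because outside $\fX^{\cris}$ the Perrin-Riou regulator on $\VV(e/f)$ is no longer the Bloch--Kato logarithm but a more subtle Iwasawa-theoretic object, whose interpretation would require a $(\varphi,\Gamma)$-module refinement of the entire comparison.
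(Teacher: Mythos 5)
The paper does not actually contain a proof of this theorem: the authors explicitly defer it to the companion paper \cite{LZ20b-regulator}, noting only that the argument ``overlaps substantially with the proof of an analogous result in which one of the $\cG_i$ is replaced by an Eisenstein series.'' So there is no in-paper argument to compare against line by line. Your proposal is, however, a reasonable and well-informed sketch of the strategy that the surrounding text strongly suggests: express the region-$(f)$ $p$-adic $L$-function as a coherent cup product via higher Hida theory, observe that at $\phi\in\fX_e^{\cris}$ the rank-one subquotient $\VV(e/f)$ has positive Hodge--Tate weight so the Perrin-Riou map specialises to the Bloch--Kato logarithm, identify $\Dcris(\VV(e/f)_\phi)^\vee$ with $H^2_{\mathrm{coh}}(\Pi)^{\new}\otimes H^0(\Sigma_1)\otimes H^0(\Sigma_2)$, compute $\log_{\mathrm{BK}}$ of the isotypical projection of the motivic diagonal class using a Nekov\'a\v{r}--Nizio\l{} / syntomic-to-coherent comparison, and then reconcile the local inputs at $p$. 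Your observation that $u_{\Kl}$ and $u_{\Bor}$ lift the same coset in $G/P_{\Kl}$ (and that this was engineered) is exactly the point the paper flags when it chooses $u_\Bor$.

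Two cautions. First, your claim that the local reconciliation at $p$ produces precisely the ratio $\cE_p^{(e)}/\cE_p^{(f)}$ is plausible but not something the paper pins down; the $p$-adic $L$-function has no interpolation formula at $\phi\in\fX^{(e)}$, so the constant relating the Klingen-depleted test vector configuration to the Iwahori-ordinary one must come out of the local zeta computations of \cite{LZ20-zeta2} and need not be that particular ratio on the nose, once all normalisations (including the $q^3/((q+1)^2(q-1))$ versus $q^4/(q^2-1)^2$ factors) are tracked. Second, you correctly single out the syntomic step --- proving that the \'etale Abel--Jacobi image of $\Delta^{[r_1,r_2,t_1,t_2]}$ in the $\Pi\times\Sigma$-isotypical part is computed by a coherent cup product against the unit-root splitting of the $p$-adic nearly-holomorphic pushforward --- as the main content. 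This is indeed what is new relative to the Eisenstein-input cases in \cite{LSZ17,LZ20}, and the paper itself says as much; so your proposal identifies the right bottleneck, but (as you acknowledge) does not discharge it, and cannot without the work of \cite{LZ20b-regulator}.
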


  The proof of this theorem will be given in a separate paper \cite{LZ20b-regulator}, since it overlaps substantially with the proof of an analogous result in which one of the $\cG_i$ is replaced by an Eisenstein series, which we have not considered here.

  Note that $\fX_{e}^{\cris}$ is a finite set, and hence is far from being Zariski-dense in $\fX$; thus the above result falls short of a proof of \cref{conj:explrecip}. However, in the setting of \cite{LZ20} (where \emph{both} $\cG_i$ are replaced by Eisenstein series), we used deformation in a larger-dimensional family, with the weights of $\Pi$ varying as well, in order to pass from the above theorem to a full proof of the explicit reciprocity law. This strategy relies on the existence of multi-variable $p$-adic $L$-functions for symplectic-type representations of $\GL(4)$ (lifted from $\GSp(4)$), via Shalika models. It does not seem to be possible to extend this method to the case of cuspidal $\cG_i$. However, in future work, we hope to give a direct proof of the existence of these $p$-adic $L$-functions in families for $\GSp(4) \times \GL(2)$ and $\GSp(4) \times \GL(2) \times \GL(2)$ using higher Hida and Coleman theory, which will permit a full proof of \cref{conj:explrecip} in this case.

\newcommand{\noopsort}[1]{\relax}
\newcommand{\etalchar}[1]{$^{#1}$}
\providecommand{\bysame}{\leavevmode\hbox to3em{\hrulefill}\thinspace}
\providecommand{\MR}[1]{}
\renewcommand{\MR}[1]{%
 MR \href{http://www.ams.org/mathscinet-getitem?mr=#1}{#1}.
}
\providecommand{\href}[2]{#2}
\newcommand{\articlehref}[2]{\href{#1}{#2}}

\end{document}